\documentclass[reqno]{amsart}

%% packages
\usepackage[citebordercolor={0.7 0.7 0.7},linkbordercolor={0.7 0.7 1}]{hyperref}
\usepackage{comment,graphicx,color}
\usepackage[all]{xy}
\excludecomment{maximacode}
\excludecomment{arxivabstract}
\includecomment{arma}
\usepackage{ltbunicode}

\ifcsname showkeystrue\endcsname
\usepackage[notcite,notref]{showkeys}

\fi

\DeclareGraphicsExtensions{.png,jpg,jpeg,.mps,.pdfmac,.pdffig,.spdf,.pdf}
\DeclareGraphicsRule{.pdffig}{pdf}{.pdffig}{}
\DeclareGraphicsRule{.pdfmac}{pdf}{.pdfmac}{}
\DeclareGraphicsRule{.spdf}{pdf}{.spdf}{}

\newcommand{\T}{{\bf T}}
\newcommand{\R}{{\bf R}}

\newcommand{\Z}{{\bf Z}}

\newcommand{\set}[1]{\left\{#1\right\}}

\newtheorem{theorem}{Theorem}[section]
\newtheorem{corollary}{Corollary}[section]

\newtheorem{lemma}{Lemma}[section]

\newtheorem{claim}{Claim}[theorem]
\theoremstyle{definition}
\newtheorem{definition}{Definition}[section]
\theoremstyle{remark}

\newtheorem{remark}{Remark}[section]

\newcommand{\safeinput}[1]{\IfFileExists{#1}{{\catcode`\&=0\input{#1}}}{\message{File
        #1 does not exists. Skipping.}}}
\def\ltxfigure#1#2#3{\resizebox{#2}{#3}{\safeinput{#1}}}

%% macros

\newcommand{\sign}[1]{{\mathrm{sign}}(#1)}
\newcommand{\degree}[1]{{\mathrm{deg}}(#1)}
\newcommand{\D}[1]{\mathrm{d}#1}
\newcommand{\mean}[1]{\overline{#1}}
\newcommand{\bmean}[1]{\hat{#1}}
\newcommand{\textem}[1]{{\em #1}}
\newcommand{\defn}[1]{\textem{#1}}

\ifcsname nhpaper\endcsname
\else
\fi

\newcommand{\divergence}[1]{{\mathrm{div}}(#1)}
\newcommand{\temp}[1]{2{\mathrm K}}

\newcommand{\kp}[2]{\langle #1, #2 \rangle}
\newcommand{\ddt}[2][\ ]{\frac{\D{#1}}{\D{#2}}}
\newcommand{\didi}[2][\ ]{\frac{∂#1}{∂#2}}
\newcommand{\cotangent}{T^*}
\newcommand{\metaref}[3]{{\ifnum#1=0(\fi}{#3}\ref{#2}{\ifnum#1=0)\fi}}
\renewcommand{\eqref}[2][0]{\metaref{#1}{#2}{eq.~}}
\newcommand{\itref}[2][0]{\metaref{#1}{#2}{}}
\newcommand{\ineqref}[2][0]{\metaref{#1}{#2}{ineq.~}}
\newcommand{\secref}[2][0]{\metaref{#1}{#2}{\S~}}

\newcommand{\pb}[2]{\left\{ #1,#2 \right\}}

\DeclareMathOperator{\hessian}{hess}
\DeclareMathOperator{\erfc}{erfc}

\newcommand{\gibbsexp}[2]{\mathbf{E}_{#1}[#2]}

\long\def\aureply#1{\ifhmode\newline\fi\noindent{\bf Author Reply}:\ {\em #1}}

\def\gettimestamp#1<#2>{\def\timestamp{\tt #2}}

%%% Local Variables: 
%%% mode: la-minor
%%% TeX-master: "nh"
%%% End: 

\gettimestamp Time-stamp: <2019-08-03 09:17:30 CDT>

\begin{document}

\title[Invariant tori]{Invariant tori for a class of singly thermostated hamiltonians}
\author{Leo T. Butler}
\address{Department of Mathematics, University of Manitoba, Winnipeg,
  MB, Canada, R2J 2N2}
\email{leo.butler@umanitoba.ca}
\date{\timestamp}
\subjclass[2010]{37J30; 53C17, 53C30, 53D25}
\keywords{thermostats; Nos{é}-Hoover thermostat; hamiltonian mechanics; KAM theory}
\thanks{The author thanks W.~G. Hoover for his comments on a draft of
  this paper. Partially supported by the Natural Science and
  Engineering Research Council of Canada grant 320 852.}

\begin{abstract}
  This paper demonstrates sufficient conditions for the existence of a
  positive measure set of invariant KAM tori in a singly thermostated,
  1 degree-of-freedom hamiltonian vector field. This result is applied
  to 4 important single thermostats in the literature and it is
  shown that in each case, if the hamiltonian is real-analytic and
  well-behaved, then the thermostated system always has a positive
  measure set of invariant KAM tori for sufficiently weak coupling and
  high temperature. This extends results of Legoll, Luskin \&
  Moeckel~\cite{MR2299758,MR2519685}.
\end{abstract}
\begin{arxivabstract}
  This paper demonstrates sufficient conditions for the existence of a
  positive measure set of invariant KAM tori in a singly thermostated,
  1 degree-of-freedom hamiltonian vector field. This result is applied
  to 4 important single thermostats in the literature and it is
  shown that in each case, if the hamiltonian is real-analytic and
  well-behaved, then the thermostated system always has a positive
  measure set of invariant KAM tori for sufficiently weak coupling and
  high temperature. This extends results of Legoll, Luskin \&
  Moeckel.
\end{arxivabstract}

\maketitle

\section{Introduction} \label{sec:intro}

In equilibrium statistical mechanics, a mechanical hamiltonian $H$ is
viewed as the internal energy of an infinitesimal system that is
immersed in, and in equilibrium with, a heat bath $B$ at the
temperature $T$. A dynamical model of the exchange of
energy was introduced by Nos{é} \cite{nose}, based on earlier work of
Andersen \cite{andersen}. This consists of adding an extra degree of
freedom $s$ and rescaling momentum by $s$:
\begin{align}
  \label{eq:nose}
  F &= H(q,p s^{-1}) + N(s,p_s), &&& \text{where }N(s,p_s) =\dfrac{1}{2 M} p_s^2 + n k T \ln s,
\end{align}
$n$ is the number of degrees of freedom of $H$, $M$ is the
``mass'' of the thermostat and $k$ is Boltzmann's constant. Solutions
to Hamilton's equations for $F$ model the evolution of the state of
the infinitesimal system along with the exchange of energy with the
heat bath.

Nos{é}'s thermostat is reduced by eliminating the state variable $s$
and rescaling time $t$~\cite{hoover}. The Nos{é}-Hoover thermostat of
the $1$ degree of freedom hamiltonian $H$ is the following vector
field:
\begin{align}
  \label{eq:nose-hoover}
  \dot{q} &= H_{ρ}, && \dot{ρ} = -H_q - ε \xi ρ, &&
  \dot{\xi} = ε \left( ρ · H_{ρ} - T \right),
\end{align}
where $ε ² = 1/M$ and $ρ = p s^{-1}$.

Hoover observed that this thermostat was ineffective in producing the
statistics of the Gibbs-Boltzmann distribution from single orbits of
the thermostated harmonic oscillator~\cite{hoover}. Numerous
extensions of the Nos{é}-Hoover thermostat have appeared, but this
thermostat remains the touchstone in the literature. This note focuses
on those thermostats which model the exchange of energy with the heat
bath using a single, additional thermostat variable ($ξ$ in
\ref{eq:nose-hoover}), the so-called \textem{single thermostats}.

To state the main result of this paper, some terminology is
necessary. Let $Σ=\R$ or $\T^1$ and $H : \cotangent Σ → \R$ be a $C^r$
hamiltonian function. A \defn{thermostat} for $H$ is a vector field
$Τ=Τ_{β}$ on the extended phase space $P = \cotangent Σ × \R$ that
preserves a Gibbs-Boltzmann-like measure and ``heats''
(resp. ``cools'') $H$ at low (resp. high) temperature relative to the
temperature $T=1/β$. Definition~\ref{def:thermostat} makes precise
this heuristic description. The vector field
$Y_{ε,β} = X_H + ε\, Τ_{β}$ on the extended phase space $P$ describes
the thermostated hamiltonian system; the parameter $ε$ determines the
coupling strength between the system and heat bath. When $Τ$ is the
Nos{é}-Hoover thermostat, the vector field $Y_{ε,β}$ is described by
the differential equations~\eqref{eq:nose-hoover}.

A hamiltonian $H$ is \defn{well-behaved} if it satisfies
definition~\ref{def:h-is-well-behaved}. Roughly, this means that $H$
should be topologically well-behaved (proper, Morse, compact critical
set) and physically well-behaved (no local maxima, finite moments of
momentum at all positive temperatures).

\begin{theorem}
  \label{thm:intro-thm-1}
  Let $H : \cotangent Σ → \R$ be a real-analytic, well-behaved
  hamiltonian. If the thermostat vector field $Τ=Τ_{β}$ is a
  \begin{enumerate}
  \item Nos{é}-Hoover thermostat~\secref{sec:hoover-separable};
  \item Tapias, Bravetti \& Sanders logistic thermostat~\secref{sec:tap-brav-san};
  \item Watanabe-Kobayashi thermostat~\secref{sec:wat-kob}; or
  \item Hoover-Sprott-Hoover thermostat~\secref{sec:hs-hsh},
  \end{enumerate}
  then there is a function $E : (0,∞) → [0,∞)$, with finitely many
  zeros, such that for each $β>0$ and all $ε ∈ [0,E(β))$, the
  thermostated vector field $Y_{ε,β}$ on $P=\cotangent Σ × \R$ has a
  positive measure set of invariant tori.
\end{theorem}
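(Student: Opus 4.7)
The plan is to cast each thermostated vector field $Y_{\varepsilon,\beta}$ as a small real-analytic perturbation of an integrable flow on an open subset of $P=\cotangent\Sigma\times\R$ foliated by $2$-tori, and then invoke a quantitative analytic KAM theorem. Since $H$ is real-analytic and well-behaved, its regular level sets in $\cotangent\Sigma$ are circles which, away from the critical points, organize into action-angle coordinates $(I,\varphi)$. In $P$ the unperturbed flow $X_H$ merely fixes $\xi$, so $2$-tori do not appear at $\varepsilon=0$; to produce them I would exploit the Gibbs-Boltzmann-preservation axiom for $\mathrm{T}_\beta$ to perform an averaging and symplectic normal-form reduction in the small parameter $\varepsilon$. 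Averaging over the fast angle $\varphi$ should yield an effective one-degree-of-freedom dynamics in $(I,\xi)$ with periodic orbits, assembling an open subset of $P$ into an invariant $2$-torus foliation in the leading-order integrable limit.

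With the integrable limit in place, I would apply a real-analytic KAM theorem in the style of Arnold--Moser--P\"oschel. Analyticity of $H$ propagates through the normal form and delivers the required analytic estimates on the perturbation. The critical input is the twist (Kolmogorov) nondegeneracy of the frequency map $\Omega(I,K;\beta)$ of the integrable limit, where $K$ is the effective $\xi$-energy. Because the entire construction is real-analytic in $\beta$ as well, the twist Jacobian is real-analytic in $\beta$, hence either identically zero or vanishing on a discrete subset. The quantitative KAM threshold then defines $E(\beta)$, whose zero set sits inside this exceptional locus; a separate argument using the compactness of the critical set and the moment bounds at positive temperatures is needed to preclude accumulation of zeros at $0$ and $\infty$, producing the finitely many zeros asserted in the conclusion.

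The main obstacle is twofold. First, one must construct and control the averaging normal form uniformly as $\beta$ ranges over $(0,\infty)$; this is precisely where the finite-moments-at-positive-temperature clause of \emph{well-behaved} and the compactness of the critical set of $H$ come into play, ensuring the required analytic estimates degenerate neither at low nor at high temperature. Second, for each of the four thermostats, one must verify that the twist Jacobian does not vanish identically in $\beta$; this reduces to computing the leading-order frequencies as period integrals of $H$ and confirming nontrivial dependence on the actions. The thermostat-specific character of this second step is exactly why the proof must split over the four sections cited in the statement: a single abstract analytic-KAM criterion, verified separately for each of the Nos\'e-Hoover, Tapias-Bravetti-Sanders, Watanabe-Kobayashi, and Hoover-Sprott-Hoover thermostats.
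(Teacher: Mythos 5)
Your high-level architecture does match the paper's: average over the fast angle of $X_H$, realize the leading-order slow dynamics in $(I,\xi)$ as a hamiltonian flow on a Poincar\'e section (this is where invariance of the Gibbs-like measure enters, via Moser's isotopy trick, in Lemmas~\ref{lem:first-order-averaging-transfm} and \ref{lem:symplectic-poincare-maps}), and then apply Arnol$'$d-style degenerate KAM with a twist condition as the decisive input. But two essential steps are missing. First, you assert that averaging ``assembles an open subset of $P$ into an invariant $2$-torus foliation,'' i.e.\ that the averaged $(I,\xi)$-dynamics has periodic orbits, without saying why. In the paper this is a theorem, not a normalization: the heating/cooling axiom, condition~\itref{def:thermostat-thermostat} of Definition~\ref{def:thermostat}, is fed into a topological degree argument (Lemmas~\ref{lem:ave-crit-pt}--\ref{lem:top-crit-pt}) to produce an \emph{elliptic} critical point of the averaged hamiltonian $\bar{G}_{0,\beta}$ --- the thermostatic equilibrium --- around which the periodic orbits, and hence the KAM tori, live. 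Condition~\itref{it:h-is-well-behaved-unbounded-temp} of well-behavedness is what guarantees such an equilibrium exists at every temperature (Lemma~\ref{lem:nh-is-a-separable-thermostat}); its role is not, as you suggest, to control the normal form uniformly in $\beta$.

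Second, and more seriously, your treatment of the twist condition would not close. You argue that the twist Jacobian is real-analytic in $\beta$, hence identically zero or with discrete zero set, and defer the rest to ``computing period integrals and confirming nontrivial dependence.'' This (i) does not exclude the identically-zero alternative, which is precisely the hard case for a general well-behaved $H$; (ii) even granting discreteness, does not give finitely many zeros of $E$, nor invariant tori for \emph{all} $\beta$ outside a finite set; and (iii) misidentifies the zeros of $E$, which in the paper are the inverse temperatures of critical cycles of $H$ --- finite because $H$ is Morse with compact critical set --- not accidental zeros of a twist determinant in $\beta$. The paper's actual non-isochronicity argument (Theorems~\ref{thm:nh-kam-nec}, \ref{thm:logistic-nec-kam}, \ref{thm:wk-nec-kam} and \ref{thm:hsh-nec-kam}) is the bulk of the work: one computes the averaged hamiltonian explicitly in Darboux coordinates, e.g.\ $\bar{G}_{0,T}=\xi^2/2+H(I(\sigma))-T\ln I(\sigma)$ for Nos\'e--Hoover, uses real-analyticity to globalize a putative local isochronicity, and then plays the rigidity of isochronous potentials (Bolotin--MacKay: the width function is proportional to $\sqrt{u}$) against the behaviour of the action near saddle and minimum vertices of the Reeb graph of $H$ to force a contradiction (a degenerate global minimum or a second critical value of $\bar{G}_{0,T}$). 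Nothing in your outline substitutes for this argument, and it is exactly the step the statement of the theorem is designed to require.
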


The zeroes of $E$ are the inverse temperatures of critical cycles of
$H$. Except for case (3), the critical cycles have an effective
temperature of zero so $E$ is never zero; in case (3), a saddle may
have a positive effective temperature and for such (inverse)
temperatures $E=0$, i.e. the present techniques are not applicable.

Legoll, Luskin and Moeckel~\cite{MR2299758,MR2519685} prove the
existence of KAM tori for the Nos{é}-Hoover thermostated harmonic
oscillator; they indicate that the general case reduces to proving the
non-isochronicity of an associated averaged hamiltonian. The present
note derives the averaged hamiltonian and uses this to prove case (1)
of the above theorem~(see~\secref[1]{sec:hoover-separable} and
remark~\ref{rem:comments-on-hoovers-thermostat-in-lit}). Watanabe and
Kobayashi~\cite{PhysRevE.75.040102} introduce a $2$-parameter family
of thermostats. They show, with the thermostated harmonic oscillator,
that the associated averaged thermostat has a first integral. The
present note extends this by deriving the hamiltonian and symplectic
structure of the averaged thermostat in the general setting in order
to prove case (3) above. The thermostats of Tapias, Bravetti \&
Sanders and Hoover, Sprott \& Hoover have been investigated by
computationally-oriented researchers with the aim of finding ergodic
single
thermostats~\cite{10.12921/cmst.2016.0000061,10.12921/cmst.2017.0000005,
  10.1016/j.cnsns.2015.08.02}.

Theorem~\ref{thm:intro-thm-1} is proven in three parts: first, it is
proven that there is a cross-section such that the Poincar{é} return
map of the averaged system has a stable fixed point; second, if the
return map is isochronous in a neighbourhood of this fixed point,
real-analyticity globalizes the isochronicity; finally, a sufficiently
explicit hamiltonian of the return map is computed and used to prove
that the hamiltonian $H$ has a degenerate global minimum. Under much
weaker hypotheses (e.g. $H$ and $Τ$ are only $C^r$), one proves the
existence of positive-measure sets of invariant tori in a
neighbourhood of a thermostatic equilibrium which satisfies a twist
condition (Theorem~\ref{thm:main-thm}).

A related obstruction is observed in the classical adiabatic piston
problem. In one variant of this problem, a box of fixed size is filled
with a gas that is separated by a massive piston. The piston is free
to move parallel to an axis of the box without friction. Neishtadt \&
Sinai and Wright~\cite{Neishtadt2004,10.1007/s00220-007-0317-0} show
that the Anosov-Kasuga averaging theorem~\cite{}, combined with
ergodicity of the gas dynamics, imply that in the infinite mass limit
the piston oscillates deterministicly and for large but finite mass
$M$ the piston's motion is approximately oscillatory for an $O(√ M)$
time period. Shah, et. al.~\cite{10.1073/pnas.1706341114} explain that
in slow-fast systems the Gibbs volume entropy of the fast subsystem is
conserved, so ergodicity of the fast subsystem frustrates ergodicity
of the whole. Indeed, for the Nos{é}-Hoover thermostat, it is proven
that in the decoupled limit of $ε=0$, the thermostat's state
oscillates in a potential well $U$ where $U$ is an analogue of the
free energy of the fast subsystem~(see eq. \ref{eq:nh-g0}).

The outline of this paper is: \S~\ref{sec:st} introduces a definition
of a single thermostat for a hamiltonian system; \S~\ref{sec:prelims}
establishes notation and terminology to discuss $1$ degree-of-freedom
hamiltonian systems; \S~\ref{sec:wcst} derives a Poincar{é} return
map, invariant symplectic form, \& hamiltonian for the averaged
thermostated vector field; \S~\ref{sec:ex} proves
Theorem~\ref{thm:intro-thm-1} using the results of the previous
section.

\section{Single Thermostats}
\label{sec:st}

Here and throughout the paper, an object is \defn{smooth} if it is
$C^r$ for some $r>7$ and a map is \defn{proper} if the pre-image of
every compact set is compact.

Let $Σ$ be a smooth manifold, $\cotangent Σ$ its cotangent bundle and
$\pb{}{}$ the canonical Poisson bracket. Let $π : P → \cotangent Σ$ be
a trivial line (or circle) bundle over $\cotangent Σ$. One should
think of $P$ as an extended phase space which models the state of a
mechanical system with points $(q,p) ∈ \cotangent Σ$ and the
thermostat's local state with $ξ ∈ π^{-1}(q,p)$. Triviality of $P$
implies that there is a unique pullback of $\pb{}{}$ to $P$ such that
$ξ$ is a locally-defined Casimir; this pullback is denoted by
$\pb{}{}$, too. This pullback is characterized by the property that if
$(q_i,p_i,ξ)$ are local coordinates on $P$ such that $(q_i,p_i)$ are
canonical coordinates on $\cotangent Σ$, then
$\pb{p_i}{q_j} = δ_{ij} = -\pb{q_j}{p_i}$ and all other brackets
vanish.

Given a smooth hamiltonian $H : \cotangent Σ → \R$, let
$X_H = \pb{H ⎄ π}{⋅}$ be the hamiltonian vector field lifted to
$P$. Say that a probability measure
\begin{equation}
  \label{eq:dmu}
  \D{μ}_{β} = Z_1(β)^{-1} \exp(-β G_{β}(q,p,ξ) )\, \D q\, \D p\, \D ξ
\end{equation}
projects to the probability measure
\begin{equation}
  \label{eq:dm}
  \D{m}_{β} = Z(β)^{-1} \exp(-β H(q,p) )\, \D q\, \D p
\end{equation}
if $\D{m}_{β} = \int_{ξ} \D{μ}_{β}$, i.e. if $\D{m}_{β}$ is a marginal
of $\D{μ}_{β}$. It is a natural convention in the literature on
thermostats to postulate that the invariant measure $\D{μ}_{β}$ of
the thermostated vector field projects to the Gibbs-Boltzmann
probability measure $\D{m}_{β}$ of the mechanical system. Somewhat
surprisingly, the main result of this paper does not require such an
assumption.

\begin{definition}
  \label{def:thermostat}
  A smooth, $(ε,β)$-dependent, vector field $Τ=Τ_{ε,β}=Τ_0 + O(ε)$ on
  $P$ is a \defn{thermostat} for $H$ if there is a smooth probability
  measure $\D{μ}_{β}$ on $P$ such that the following holds
  \begin{enumerate}
  \item\label{def:thermostat-invariance} $\D{μ}_{β}$ is invariant for $Y_{ε} = X_H + ε Τ$ for all $ε$;
  \item\label{def:thermostat-proper} $G_{β}$ is proper for all $β>0$;
  \item\label{def:thermostat-thermostat} there exists an interval of regular values of $H$,
    $[c_-,c_+]$, constants $d_- < d_+$ and a connected component
    $P_{c,d} ⊂ (H × ξ)^{-1}([c_-,c_+] × [d_-,d_+])$ such that
    \begin{enumerate}
    \item the average value of $\kp{\D ξ}{Τ_0}$ is of opposite sign on
      $P_{c,d} ∩ H^{-1}(c_-)$ and $P_{c,d} ∩ H^{-1}(c_+)$;
    \item the average value of $\kp{\D H}{Τ_0}$ is of opposite sign on
      $P_{c,d} ∩ ξ^{-1}(d_-)$ and $P_{c,d} ∩ ξ^{-1}(d_+)$.
    \end{enumerate}
  \end{enumerate}
\end{definition}

Part~\itref{def:thermostat-thermostat} may be roughly translated into
the following description: In the $H-ξ$ plane there is a box
$[c_-,c_+] × [d_-,d_+]$ and (a) on the left (resp. right) edge $ξ$
decreases (resp. increases) on average; (b) on the bottom (resp. top)
edge $H$ increases (resp. decreases) on average. The condition (a)
says that, on average, the thermostat state $ξ$ does not increase or
decrease indefinitely; (b) says that the hamiltonian $H$ is heated at
low energy and cooled at high energy. The thermostat vector field may
be temperature-dependent; in principle, one should insist that the
average temperature along almost every orbit of the thermostated
vector field $Y_{ε}$ be $T=1/β$ (such is the case for the four
thermostats in Theorem~\ref{thm:intro-thm-1}), but even with such a
hypothesis condition~\ref{def:thermostat-thermostat} seems
necessary. The thermostat vector field may also be $ε$-dependent; this
is a natural assumption because it implies that an averaging
transformation transforms a thermostat vector field to a thermostat
vector field (see definition \ref{def:first-order-averaging-transfm}
and the subsequent paragraph below). In the following, the
$β$-dependence of the thermostat vector field $Τ$ is suppressed: this
is partly for expedience, partly because $β>0$ is fixed throughout,
and partly because the notation is meant to suggest that the
thermostat vector field $Τ$ is forcing the hamiltonian as if it were
in contact with a heat bath at temperature $T$.

The requirement that the (log of the) density of the invariant measure
$\D{μ}_{β}$ be proper, \itref{def:thermostat-proper}, may appear
unnatural at first sight. However, it implies that the hamiltonian $H$
is proper--which is natural--and it is a property shared by all
examples in the literature known to the author (see section
\ref{sec:ex} below).

Definition~\ref{def:thermostat} encompasses that of
Ramshaw~\cite[Section V]{PhysRevE.92.052138}. This latter work, which
develops a formalism that encapsulates earlier single thermostats,
posits specific forms of $G$ and $Τ$.

\section{Preliminary Materials}
\label{sec:prelims}

This section establishes notation and terminology for subsequent
sections. Throughout, $Σ = \R$ or $\T^1$ and
$\cotangent Σ = \set{(q,p) \mid q ∈ Σ, p ∈ \R}$ is the cotangent
bundle. A smooth function $H : \cotangent Σ → \R$ (also called a
hamiltonian function) is \defn{Morse} if, at each critical point its
Hessian is non-degenerate; it is a \defn{topological Morse} function
if each critical point has a neighbourhood homeomorphic to a
neighbourhood of a critical point of a Morse function $H'$ and $H'$ is
conjugate to $H$ by this homeomorphism; $H$ is \defn{proper} if the
pre-image of each compact set is compact; $H$ is \defn{mechanical} if
$H(q,p) = ½ p ² + V(q)$ and \defn{quasi-mechanical} if
$H(q,p) = F(p) + V(q)$ where $F$ is even, $F(0)=0$ and $F'(p)/p > 0$
for all $p ≠ 0$. The following definition summarizes the key
properties needed in the present paper.

\begin{definition}[Well behaved]
  \label{def:h-is-well-behaved}
  If $H : \cotangent Σ → \R$ is a smooth hamiltonian that satisfies
  \begin{enumerate}
  \item\label{it:h-is-well-behaved-proper} $H$ is proper;
  \item\label{it:h-is-well-behaved-Morse} $H$ is (topologically) Morse;
  \item\label{it:h-is-well-behaved-finite} $H$ has a compact critical set;
  \item\label{it:h-is-well-behaved-no-maxima} $H$ has no local maxima;
  \item\label{it:h-is-well-behaved-unbounded-temp} for all odd
    integers $k ≥ 1$ and $β>0$, $\gibbsexp{β}{p^{k-1}}$ and
    $\gibbsexp{β}{p^k · H_p}$ exist and
    \begin{equation}
      \label{eq:h-is-well-behaved-unbounded-temp}
      \limsup_{β ↘ 0} \dfrac{\gibbsexp{β}{p^k · H_p}}{\gibbsexp{β}{p^{k-1}}} = +∞,
    \end{equation}
    where $\gibbsexp{β}{φ} = \iint_{\cotangent Σ} φ\, \D{m}_{β}$ is
    the expected value of $φ$ with respect to the Gibbs-Boltzmann measure~\eqref{eq:dm};
  \end{enumerate}
  then $H$ is said to be \defn{(topologically) well-behaved}.
\end{definition}

Conditions (1-3) are fairly natural assumptions. Condition (4) is
modelled on the same property for mechanical hamiltonians. Condition
(5) is also, given the setting: if $H$ is mechanical, the stated ratio
of mean values is $T$ (independent of $k$), and the assumption is that
the ``effective'' temperature
$\gibbsexp{β}{p^k · H_p}/\gibbsexp{β}{p^{k-1}}$ of the system should
converge to infinity as $T$ does provided that $H$ is
``well-behaved''.

This paper is primarily concerned with smooth, topologically
well-behaved hamiltonians $H$. $H$ has only local minima and saddle
critical points by
condition~\ref{it:h-is-well-behaved-no-maxima}. Moreover, properness
implies that $H$ is bounded below and it attains that lower bound
(which will be assumed to be $0$ henceforth). Conditions
(\ref{it:h-is-well-behaved-Morse}+\ref{it:h-is-well-behaved-finite})
imply that $H$ has only finitely many critical points. If $c$ is a
local minimum, then $H^{-1}(c)$ is a union of a finite number $r_c$ of
regular circles and $k_c$ minimum points. A neighbourhood $N_c$ of the
critical level is a disjoint union of $r_c$ annuli and $k_c$ disks. On
the other hand, if $c$ is a saddle critical value, then since $H$ has
no local maxima, $H^{-1}(c)$ also has a simple description: there are
$r_c ≥ 0$ regular circles, and $s_c > 0$ singular path
components. When $Σ=\R$, the $i$-th~%
singular component of $H^{-1}(c)$ consists of $k_{c,i}+1 > 1$ circles
pinched at $k_{c,i}$ distinct points. A small neighbourhood $N_c$ of
$H^{-1}(c)$ is a disjoint union of $r_c$ annuli and $s_c$ disks where
the $i$-th disk has $k_{c,i}+1$ disjoint, smaller disks removed from
its interior. The boundary of $N_c$ consists of the boundary of those
deleted smaller disks and the ``lower half'' of the annuli boundaries
(which make up $H^{-1}(c-ε)$) and the boundary of the larger disk and
``upper half'' of the annuli boundaries (which make up
$H^{-1}(c+ε)$). When $Σ=\T^1$, the above description holds except for
the largest saddle critical value: in that case, a neighbourhood $N_c$
of $H^{-1}(c)$ is easiest to describe: it is a cylinder with $k_c$
disjoint disks removed. The boundary of $N_c$ consists of $2$
essential circles ($=H^{-1}(c+ε)$) and $k_c$ inessential circles
($=H^{-1}(c-ε)$); $N_c$ retracts onto $H^{-1}(c)$, which is
$\max\set{2,k_c}$ circles pinched at $k_c$ points. Finally, if $c$ is
a critical value of mixed type (i.e. $H^{-1}(c)$ contains both a local
minimum and a saddle), then $H^{-1}(c)$ contains $r_c$ regular
circles, $k_c$ local minima and $s_c$ saddle components and the above
descriptions of a neighbourhood $N_c$ are combined. Because the saddle
components are most important for the purposes here, a critical value
$c$ will be said to be a saddle value if $s_c > 0$, i.e. if
$H^{-1}(c)$ contains a saddle.

The preceding paragraph implies that the coarse topological structure
of the level-sets of $H$ can be summarized in a directed tree $Γ_H$
with the following structure: (see figure~\ref{fig:gamma-h})
\begin{enumerate}
\item[$Σ=\R$:] $Γ_H$ is a finite tree with each branch either
  terminating at a vertex (a local minimum) or branching into $s_c>1$
  separate branches (a saddle), the root vertex is labeled $∞$ and the
  highest vertices are labeled $0$; or
\item[$Σ=\T^1$:] $Γ_H$ is obtained from a finite tree similar to that
  described in the first case by splitting the root branch and vertex
  in two (and labeling the latter as $± ∞$).
\end{enumerate}
Each edge of $Γ_H$ is naturally homeomorphic to a closed interval by
$H$, and $H$ partially orders the graph, too.

The graph $Γ_H$ has a second, equally valuable description. Each point
$γ ∈ Γ_H$ is a path-connected component of a level set of $H$. When
$\D{H} |_{γ}$ does not vanish (i.e. when $γ$ lies in the interior of
an edge), $γ$ is a circle and an orbit of the hamiltonian flow $φ^t$
of $H$. Moreover, there is a canonical quotient map $ψ$ and functions
$I,\tilde{I}$ such that
\[
  \xymatrix @R=2pc @C=4pc @M=5pt {
    \coprod \T^1 × \R \ar[drr] \\
    \ar[u]_{(θ,I)} L \ar@{^{(}->}[r]^{}\ar[d]^{ψ|L}                                       & \cotangent Σ \ar[r]_{I}\ar[d]_{ψ} & \R \\
                             B_H \ar@{^{(}->}[r]^{}                                       & Γ_H \ar[ur]_{\tilde{I}}
  }
\]
commutes. The quotient map $ψ$ is the quotient map obtained from the
equivalence relation $\sim$ where $X \sim X'$ iff $H(X)=H(X')$ and $X$
and $X'$ lie in the same path-connected component of
$H^{-1}(H(X'))$. The function $\tilde{I}$ is defined by
\[ 2 π \tilde{I}(γ) = \oint_{γ} p\, \D{q}. \] $\tilde{I}$ is
continuous on $Γ_H$ less the set of saddle vertices. At a saddle
vertex $σ$ one has the identity
\[ \lim_{γ ↘ σ} \tilde{I}(γ) = \sum_{γ} \lim_{γ ↗ σ} \tilde{I}(γ), \]
where the right-hand sum is the sum over all edges incoming to
$σ$. This also holds for vertices of local minima, with the convention
that the sum over an empty set is $0$ (i.e. $\tilde{I}$ is continuous
at local minimum vertices).

If $B_H = Γ_H - V_H$ is the set of points that are not vertices,
i.e. $B_H$ is the union of the interiors of the edges of $Γ_H$, and
$L = ψ^{-1}(B_H)$, then $ψ | L$ is a proper submersion whose fibres
are circles. Classical constructions yield the existence of
\defn{angle-action} variables $(θ,I) : L → \coprod \T^1 × \R$ where
the disjoint union is taken over the edge set of $Γ_H$~\cite[\S
50]{MR997295}. In these variables, $H=H(I)$ and $H_I>0$ since
$\tilde{I}$ is monotone increasing in $H$. If $σ$ is a saddle vertex,
then as $γ → σ$ (from above or below), $H_I(\tilde{I}(γ)) → 0$ since
the period goes to $∞$; if $σ$ is a local minimum vertex, then as
$γ ↘ σ$, $H_I(\tilde{I}(γ)) → ω_{σ} > 0$ where $ω_{σ}$ is the
frequency of the linearized oscillations at $σ$. It follows that the
function
\begin{equation}
  \label{eq:kappa}
  Κ(I) = I · H_I(I)
\end{equation}
is a continuous, non-negative function that vanishes only on the
vertex set of $Γ_H$ and is smooth on
$B_H$. Condition~\ref{it:h-is-well-behaved-unbounded-temp} of the
well-behaved definition with $k=1$ implies that $Κ$ maps a root edge
of $Γ_H$ surjectively onto $[0,∞)$ (see
Lemma~\ref{lem:nh-is-a-separable-thermostat}).

Let it be noted that $Γ_H$ is defined for all $C^2$ proper
hamiltonians, but it is not as nice in the general case. If $H$ is
topologically well-behaved, then $Γ_H$ has the structure and
properties described above.

\begin{figure}[h]
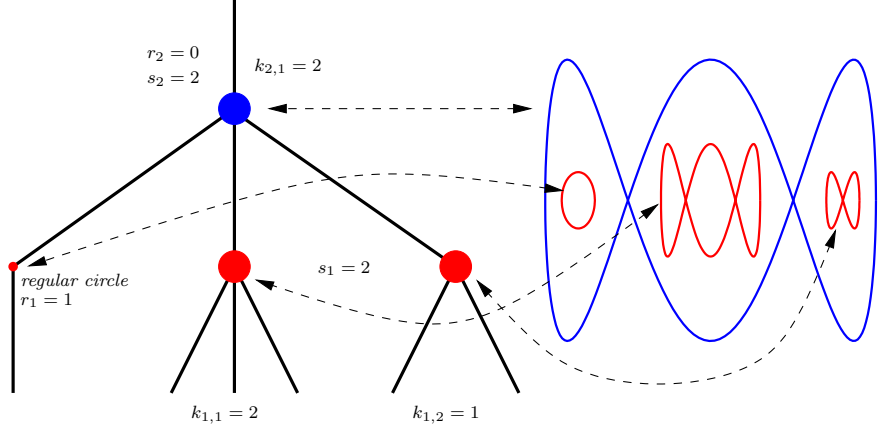

  \label{fig:gamma-h}
  \caption{An example of $Γ_H$ and the contours of $H$ with two
    critical levels at heights $c=1$ (red) \& $2$ (blue). The regular
    circle on the left branch is a marked point, not a vertex.}
  \ltxfigure{gamma-h-illustrated.tex}{0.95\textwidth}{!}
\end{figure}

Background references for this material
include~\cite{scopus2-s2.0-30244486810} and \cite[chapter
2]{MR2036760}. A construction of the Reeb graph $Γ_H$ for compact
surfaces is described there.

\section{Weakly Coupled Single Thermostats}
\label{sec:wcst}

In the sequel, $H : \cotangent Σ → \R$ is a proper, smooth function;
$Τ$ is a thermostat for $H$ in the sense of definition
\ref{def:thermostat} and $\D{μ}_{β}$ is an invariant probability
measure in the same sense.

In the variables $(θ,I,ξ)$ on $π^{-1}(L) ⊂ P$, one has
\begin{align}
  \label{eq:aa-nf}
  X_H &= H_I\, ∂_{θ}, &&& Τ &= a\, ∂_{θ} + b\, ∂_{I} + c\, ∂_{ξ} \\\notag
  &&&&&= a_0\, ∂_{θ} + b_0\, ∂_{I} + c_0\, ∂_{ξ} + O(ε)
\end{align}
where $a,b,c$ are smooth functions of $(θ,I,ξ;ε)$ and $a_0,b_0,c_0$
are independent of $ε$. The invariance of $\D{μ}_{β}$ implies that
\begin{align}
  \label{eq:invariance1}
  \kp{\D G}{X_H} &≡ 0, &&& \textrm{so } &&&& G &= G(I,ξ) &&& \textrm{and}\\
  \label{eq:invariance2}
  β \kp{\D G}{Τ} - \divergence{Τ} &≡ 0, &&& \textrm{so } &&&& a_{θ} &= β\, b\, G_I - b_I + β\, c\, G_{ξ} - c_{ξ}.
\end{align}

\subsection{Averaging}
\label{sec:averaging}

Let $x̄$ denote the mean value of $x$ over $θ$:
$x̄(I,ξ) = \frac{1}{2 π} \int_0^{2 π} x(θ,I,ξ) \D{θ}$. If
$x = x̄$, then the over-bar will be omitted. Equations
\ref{eq:invariance1}--\ref{eq:invariance2} imply that
\begin{align}
  \label{eq:invariant3}
  0 &≡ β\, \bar{b} \, G_I - \bar{b}_I + β\, c̄\, G_{ξ} - c̄_{ξ}.
\end{align}

Let $P_{c,d} ⊂ H^{-1}([c_-,c_+]) \cap ξ^{-1}([d_-,d_+])$ be the
compact, connected component from
condition~\itref{def:thermostat-thermostat} of
definition~\ref{def:thermostat}. The hamiltonian $H$ is critical-point
free on $P_{c,d}$, so $H_I ≠ 0$ on this set. Moreover, it is clear
that the condition~\itref{def:thermostat-thermostat} is satisfied for
all sufficiently small perturbations of the solid torus $P_{c,d}$,
too.

\begin{definition}
  \label{def:first-order-averaging-transfm}
  A \defn{first-order averaging transformation} is a near-identity map
  $F_{ε} : P_{c,d} → π^{-1}(L)$ such that
  \begin{enumerate}
  \item $F_{ε} = 1 + O(ε)$;
  \item $F_{ε\, *}\, \left( X_H + ε Τ \right) = X_H + ε \bar{Τ}_0 + O(ε^2)$; and
  \item $F_{ε}^* \D{μ}_{β} =  \D{μ}_{β}$,
  \end{enumerate}
  for all $ε, β > 0$.
\end{definition}

Conditions (2+3) imply that the transformed thermostated vector field
$\tilde{Y}_{ε} := F_{ε\, *} Y_{ε}$ is itself a thermostated vector
field and $\tilde{Τ}=\bar{Τ}_0 + O(ε)$ is also a thermostat for $H$
according to definition~\ref{def:thermostat}. For reference, the
averaged vector field is $\bar{Y}_{ε} = X_H + ε \bar{Τ}$ and it
coincides with $\tilde{Y}_{ε}$ up to $O(ε^2)$.

\begin{lemma}
  \label{lem:first-order-averaging-transfm}
  There exists $c', d'$ and a first-order averaging transformation $F_{ε} : P_{c',d'} → π^{-1}(L)$ for all $ε$ sufficiently small.
\end{lemma}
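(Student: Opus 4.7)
The plan is to realise $F_ε$ as the time-$ε$ flow of a smooth, $θ$-mean-zero vector field $W$ on a neighbourhood of $P_{c,d}$. With this choice, (1) is immediate, (2) is a one-line cohomological calculation, and the decisive observation is that the invariance identity~\eqref[1]{eq:invariance2} applied to $Τ_0$ is exactly the compatibility condition needed for $W$ to be divergence-free with respect to $\D{μ}_{β}$, giving (3) exactly rather than merely to leading order in $ε$.

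In the angle-action coordinates $(θ,I,ξ)$ on $π^{-1}(L)$ one has $X_H = H_I\,∂_θ$ with $H_I ≠ 0$ on the compact set $P_{c,d}$. Writing $Τ_0 = a_0\,∂_θ + b_0\,∂_I + c_0\,∂_ξ$ and $\tilde x = x - \bar x$ for the $θ$-mean-zero parts, I would define $W = A\,∂_θ + B\,∂_I + C\,∂_ξ$ by the cohomological equations
\begin{align*}
  H_I\,B_θ &= -\tilde b_0, & H_I\,C_θ &= -\tilde c_0, & H_I\,A_θ &= -\tilde a_0 + H_{II}\,B,
\end{align*}
normalised by $\bar A = \bar B = \bar C = 0$. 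Each right-hand side has vanishing $θ$-mean (for the third, because $\overline{H_{II}\,B} = H_{II}\,\bar B = 0$), so smooth $2π$-periodic-in-$θ$ solutions $A,B,C$ exist on $P_{c,d}$. Using the identity $[H_I\,∂_θ,\,B\,∂_I] = H_I B_θ\,∂_I - B\,H_{II}\,∂_θ$ and its analogues, a direct computation gives $[X_H,W] = -\tilde Τ_0$, whence
\[
  F_{ε\,*}(X_H + ε\, Τ) = X_H + ε\, Τ + ε [X_H,W] + O(ε^2) = X_H + ε\, \bar Τ_0 + O(ε^2),
\]
yielding (2); (1) is immediate from $F_ε = 1 + ε\, W + O(ε^2)$.

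For (3), the flow of $W$ preserves $\D{μ}_{β} = Z_1(β)^{-1} e^{-β G(I,ξ)}\,\D θ\,\D I\,\D ξ$ if and only if the divergence of $W$ relative to $\D{μ}_{β}$ vanishes, i.e.\
\[
  A_θ + B_I + C_ξ - β\,(B\,G_I + C\,G_ξ) = 0. \qquad (\ast)
\]
The $θ$-average of $(\ast)$ vanishes since $\bar A = \bar B = \bar C = 0$ and $G_I, G_ξ$ are $θ$-independent. Differentiating $(\ast)$ in $θ$, multiplying by $H_I$, and substituting the three cohomological equations to eliminate $A_θ, B_θ, C_θ$ leaves, after cancellation of the $H_{II}\,B_θ$ contributions, precisely
\[
  \tilde a_{0,θ} = β\,\tilde b_0\,G_I - \tilde b_{0,I} + β\,\tilde c_0\,G_ξ - \tilde c_{0,ξ},
\]
which is the $θ$-mean-zero part of~\eqref[1]{eq:invariance2} applied to $Τ_0$, and hence holds automatically. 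Thus $∂_θ(\ast) ≡ 0$, and combined with the vanishing mean this forces $(\ast) ≡ 0$. Therefore $F_ε := φ_W^ε$ preserves $\D{μ}_{β}$ exactly, giving (3).

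The remaining issue is simply to keep the image of $F_ε$ inside $π^{-1}(L)$. Since $P_{c,d} ⊂ π^{-1}(L)$ is compact and disjoint from the critical set of $H$, $W$ is bounded in $C^1$ on $P_{c,d}$; choosing $c_- < c_-' < c_+' < c_+$ and $d_- < d_-' < d_+' < d_+$ so that $P_{c',d'}$ is compactly contained in the interior of $P_{c,d}$, one obtains $F_ε(P_{c',d'}) ⊂ P_{c,d} ⊂ π^{-1}(L)$ for all $ε$ smaller than an explicit constant determined by $\|W\|_{C^0(P_{c,d})}$ and the chosen margins, completing the construction.
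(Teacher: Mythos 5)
Your construction is correct, and for the decisive step---exact preservation of $\D{\mu}_{\beta}$---it takes a genuinely different route from the paper. The paper first produces the near-identity map $f_{\varepsilon}=1+\varepsilon(\varphi,\iota,\eta)$ from standard averaging theory, observes that it preserves the measure only up to $O(\varepsilon^{2})$, and then corrects it by Moser's isotopy trick, composing with a $g_{\varepsilon}=1+O(\varepsilon^{2})$ after a bump-function adjustment near the boundary of $P_{c,d}$; that is where the shrinkage to $P_{c',d'}$ enters there. You instead take $F_{\varepsilon}$ to be the time-$\varepsilon$ flow of the mean-zero solution $W$ of the homological equations and show that $W$ is \emph{exactly} divergence-free for $\D{\mu}_{\beta}$: the $\theta$-mean of the weighted divergence vanishes by the normalisation $\bar{A}=\bar{B}=\bar{C}=0$, and its $\theta$-derivative vanishes because, after eliminating $A_{\theta},B_{\theta},C_{\theta}$ through the homological equations (the two $H_{II}B_{\theta}$ contributions cancel, as you note), what remains is exactly the fluctuating part of the order-$\varepsilon^{0}$ invariance identity \eqref[1]{eq:invariance2}, whose mean part is \eqref[1]{eq:invariant3}. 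I have checked the bracket computation giving $[X_H,W]$ equal to minus the oscillating part of the leading-order thermostat field, and the divergence cancellation; both are right. Your route buys an exact statement with no appeal to Moser's lemma and exposes the structural reason the averaging transformation can be made measure-preserving, namely the invariance of $\D{\mu}_{\beta}$ under the thermostat field itself; the paper's route is more robust in that it requires no special normalisation and would survive if the first-order generator failed to be exactly divergence-free. The only bookkeeping worth adding is regularity: $W$ loses one derivative relative to the thermostat field in the $(\theta,I,\xi)$ chart, consistent with Remark~\ref{rem:differentiability}, so nothing is lost compared with the paper's construction.
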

\begin{proof}
  The existence of a near-identity transformation $f_{ε}$ satisfying
  conditions (1+2) of
  definition~\ref{def:first-order-averaging-transfm} follows from
  averaging theory~\cite{MR2316999}. Indeed, one verifies that
  \begin{align}
    \label{eq:first-order-averaging-transfm0}
    f_{ε}(θ,I,ξ) & = (θ,I,ξ) + ε\, (φ,ι,η),
                 &  &  & ι & = \dfrac{1}{2 π H_I} × \int_0^{θ} \left( \bar{b}(I,ξ) - b(I,t,ξ) \right)\, \D{t}                \\\notag
                 &  &  &   & φ & = \dfrac{1}{2 π H_I} × \int_0^{θ} \left( \bar{a}(I,ξ) - a(I,t,ξ) - H_{II} ι \right)\, \D{t} \\\notag
                 &  &  &   & η & = \dfrac{1}{2 π H_I} × \int_0^{θ} \left( \bar{c}(I,ξ) - c(I,t,ξ) \right)\, \D{t} 
  \end{align}
  is such a transformation provided that $P_{c,d}$ is shrunk by a fixed amount, i.e. $c'_{±} = c_{±} + O(ε_0)$ and similarly for $d'_{±}$.
  It is straightforward to verify that $f_{ε}^*\, \D{μ}_{β} = (1+O(ε^2))\, \D{μ}_{β}$--this follows from the fact that the averaged vector field $\bar{Y}_{ε}$ preserves $\D{μ}_{β}$ and $\tilde{Y}_{ε} = \bar{Y}_{ε} + O(ε^2)$.

  Moser's isotopy trick~\cite[Lemma 2]{scopus2-s2.0-84968494100}
  implies there is a diffeomorphism $g_{ε} = 1 + O(ε^2)$ such that
  $g_{ε}^* f_{ε}^*\, \D{μ}_{β} = \D{μ}_{β}$. The statement of that
  lemma assumes the domain is a unit cube $[0,1]^n ⊂ \R^n$, but the
  proof that constructs the diffeomorphism works without alteration
  for the case where one or more coordinates are $1$-periodic,
  too. The lemma also requires that the volume forms
  $f_{ε}^*\, \D{μ}_{β}$ and $\D{μ}_{β}$ coincide on a neighbourhood of
  the boundary of the solid torus $P_{c,d}$; by means of a $C^{∞}$
  bump function that is unity on $P_{c',d'} ⊂ P_{c,d}$ and vanishes on
  a neighbourhood of the boundary of $P_{c,d}$ this can be
  achieved. It follows that $F_{ε} = f_{ε} \circ g_{ε}$ satisfies
  conditions (1--3) of
  definition~\ref{def:first-order-averaging-transfm}.
\end{proof}

\begin{remark}
  \label{rem:differentiability}
  If $H$ is $C^r$ in the natural mechanical coordinates $(q,p)$ on
  $\cotangent Σ$, then $I, ξ$ are $C^r$ functions but $θ$ is only
  $C^{r-1}$ (since it is a normalized time along the flow of the
  $C^{r-1}$ vector field $X_H$). This implies that the averaging
  transformation $f_{ε}$ is $C^{r-2}$; Moser's construction does not
  reduce differentiability~\cite[p. 291]{scopus2-s2.0-84968494100} so
  $g_{ε}$ and hence the first-order averaging transformation $F_{ε}$
  is $C^{r-2}$ and hence $\tilde{Y}_{ε}$ is $C^{r-3}$ while
  $\bar{Y}_{ε}$ is $C^{r-1}$.

  In the following, it is assumed that the constants $c_{±}, d_{±}$ in
  definition~\ref{def:thermostat} are chosen so that there is a
  first-order averaging transformation $F_{ε}$ defined on $P_{c,d}$
  for all $ε$ sufficiently small.
\end{remark}

Define
\begin{align}
  \label{eq:volume-time-rescaling}
  \tilde{λ}_{ε} &= \kp{\D{θ}}{\tilde{Y}_{ε}}, &&& \tilde{Z}_{ε} &= \tilde{λ}_{ε}^{-1}\, \tilde{Y}_{ε}, &&& \D{\tilde{Λ}}_{ε,β} &= \tilde{λ}_{ε}\, \D{μ}_{β};\\\notag
  \bar  {λ}_{ε} &= \kp{\D{θ}}{\bar  {Y}_{ε}}, &&& \bar  {Z}_{ε} &= \bar  {λ}_{ε}^{-1}\, \bar  {Y}_{ε}, &&& \D{\bar  {Λ}}_{ε,β} &= \bar  {λ}_{ε}\, \D{μ}_{β}
\end{align}
The scalars $\tilde{λ}_{ε}$ and $\bar{λ}_{ε}$ both equal $H_I + ε \bar{a} + O(ε^2)$ so their difference is $O(ε^2)$. The vector field $\tilde{Z}_{ε}$ (resp. $\bar{Z}_{ε}$) preserves the volume form $\D{\tilde{Λ}}_{ε,β}$ (resp. $\D{\bar  {Λ}}_{ε,β}$) and these volume forms differ by $O(ε^2)$. It is assumed that $ε$ is sufficiently small that neither scalar vanishes on $P_{c,d}$.

Let
\begin{equation}
  \label{eq:poincare-section}
  S = S_{c,d} = P_{c,d} \cap \set{ θ ≡ 0 \bmod 2 π },
\end{equation}
which is a Poincar{é} section for $\tilde{Z}_{ε}$ and
$\bar{Z}_{ε}$. Let $\tilde{ρ}_{ε}$ (resp. $\bar {ρ}_{ε}$) be the
return map of $\tilde{Z}_{ε}$ (resp $\bar {Z}_{ε}$). The return map of
each vector field is the flow map restricted to $S$ evaluated at time
$2 π$, due to the normalization in
\eqref{eq:volume-time-rescaling}. The return map $\tilde{ρ}_{ε}$
(resp. $\bar {ρ}_{ε}$) preserves the area form
$\D{\tilde{m}}_{ε,β} = \left. \tilde{Λ}_{ε,β} \right|_S$
(resp. $\D{\bar {m}}_{ε,β} = \left. \bar {Λ}_{ε,β} \right|_S$)
(c.f.~\cite[\S 3]{MR2299758}). It follows
from~\eqref{eq:volume-time-rescaling} that
$\D{\tilde{m}}_{ε,β} = \D{\bar {m}}_{ε,β} + O(ε^2)$ and that
$\D{\bar {m}}_{ε,β} = Z(β)^{-1}\, H_I\, \exp(-β G)\, \D{I} ∧ \D{ξ} +
O(ε)$ (note the difference in orders).

\begin{lemma}
  \label{lem:symplectic-poincare-maps}
  There exist diffeomorphisms $\tilde{g}_{ε} = 1 + ε \tilde{g}_1 +
  O(ε^2)$, $\bar  {g}_{ε} = 1 + ε \bar  {g}_1 + O(ε^2)$ of $S$ such
  that for all $ε$ sufficiently small
  \begin{enumerate}
  \item\label{it:symplectic-poincare-maps-1} $\tilde{g}_{ε} - \bar  {g}_{ε} = O(ε^2)$, i.e. $\tilde{g}_1 = \bar{g}_1$;
  \item\label{it:symplectic-poincare-maps-2} $\tilde{g}_{ε}^*\, \D{\tilde{m}}_{ε,β} = ω_{β} = \bar
    {g}_{ε}^*\, \D{\bar  {m}}_{ε,β}$ where $ω_{β} = Z(β)^{-1}\, H_I\, \exp(-β G)\, \D{I} ∧ \D{ξ}$;
  \item\label{it:symplectic-poincare-maps-3} $\tilde{ψ}_{ε} = \tilde{g}_{ε}^{-1} \circ \tilde{ρ}_{ε} \circ \tilde{g}_{ε}$ and $\bar  {ψ}_{ε} = \bar  {g}_{ε}^{-1} \circ \bar  {ρ}_{ε} \circ \bar  {g}_{ε}$ preserve the symplectic form $ω_{β}$;
  \item\label{it:symplectic-poincare-maps-4} $\tilde{ψ}_{ε} - \bar  {ψ}_{ε} = O(ε^2)$.
  \end{enumerate}
\end{lemma}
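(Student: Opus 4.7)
The plan is to apply Moser's isotopy trick on the two-dimensional Poincar\'e section $S$ to build near-identity conjugating diffeomorphisms $\tilde{g}_{ε}$ and $\bar{g}_{ε}$ that pull the return-map-invariant area forms back to the common reference form $ω_{β}$, and then to extract items~\itref{it:symplectic-poincare-maps-1} and~\itref{it:symplectic-poincare-maps-4} by expanding the construction in powers of $ε$.

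First I would record, from the paragraph following~\eqref{eq:volume-time-rescaling}, the expansions
\[
  \D{\tilde{m}}_{ε,β} = ω_{β} + ε\, α_1 + O(ε^2), \qquad \D{\bar{m}}_{ε,β} = ω_{β} + ε\, α_1 + O(ε^2),
\]
on $S$, with the \emph{same} first-order perturbation $α_1$. The coincidence of the $O(ε)$ terms is the crucial input: it follows from $\tilde{λ}_{ε} - \bar{λ}_{ε} = O(ε^2)$ together with the fact that the first-order averaging transformation $F_{ε}$ already satisfies $F_{ε}^*\D{μ}_{β} = \D{μ}_{β}$.

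I would then apply Moser's isotopy trick~\cite[Lemma 2]{scopus2-s2.0-84968494100}, as in the proof of Lemma~\ref{lem:first-order-averaging-transfm}, to the linear interpolations $\tilde{α}_t = (1-t)ω_{β} + t\D{\tilde{m}}_{ε,β}$ and $\bar{α}_t = (1-t)ω_{β} + t\D{\bar{m}}_{ε,β}$, solving
\[
  d\bigl(i_{\tilde{V}_t}\tilde{α}_t\bigr) = -\bigl(\D{\tilde{m}}_{ε,β} - ω_{β}\bigr), \qquad d\bigl(i_{\bar{V}_t}\bar{α}_t\bigr) = -\bigl(\D{\bar{m}}_{ε,β} - ω_{β}\bigr),
\]
for time-dependent vector fields $\tilde{V}_t, \bar{V}_t$ tangent to $\partial S$, and taking $\tilde{g}_{ε}, \bar{g}_{ε}$ to be their time-$1$ flows. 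This proves item~\itref{it:symplectic-poincare-maps-2}. Item~\itref{it:symplectic-poincare-maps-3} is then immediate, since $\tilde{ρ}_{ε}^*\D{\tilde{m}}_{ε,β} = \D{\tilde{m}}_{ε,β}$ conjugates through $\tilde{g}_{ε}$ to $\tilde{ψ}_{ε}^*ω_{β} = ω_{β}$, and analogously for $\bar{ψ}_{ε}$. For items~\itref{it:symplectic-poincare-maps-1} and~\itref{it:symplectic-poincare-maps-4}, note that the Moser ODE depends linearly on its source term; since the first-order perturbations of $\D{\tilde{m}}_{ε,β}$ and $\D{\bar{m}}_{ε,β}$ coincide, one gets $\tilde{V}_t - \bar{V}_t = O(ε^2)$, hence $\tilde{g}_{ε} - \bar{g}_{ε} = O(ε^2)$, proving~\itref{it:symplectic-poincare-maps-1}. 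Then $\tilde{Z}_{ε} - \bar{Z}_{ε} = O(ε^2)$ by~\eqref{eq:volume-time-rescaling}, so the time-$2π$ return maps satisfy $\tilde{ρ}_{ε} - \bar{ρ}_{ε} = O(ε^2)$; combining with~\itref{it:symplectic-poincare-maps-1} and the chain rule yields $\tilde{ψ}_{ε} - \bar{ψ}_{ε} = O(ε^2)$.

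The main obstacle I expect is the total-mass condition built into Moser's trick: an area-preserving diffeomorphism of $S$ onto itself can carry one positive area form to another only if their total integrals coincide, and a priori $\int_S \D{\tilde{m}}_{ε,β} - \int_S ω_{β} = O(ε) \ne 0$. I would handle this exactly as in the proof of Lemma~\ref{lem:first-order-averaging-transfm}: shrink $S$ to a compactly contained subsection $S' \subset S$, use a $C^{∞}$ bump function to smoothly interpolate $\D{\tilde{m}}_{ε,β}$ back to $ω_{β}$ on the collar $S \setminus S'$, and add a compactly-supported $O(ε)$ correction $2$-form to balance the integrals. Since this correction is itself $O(ε)$ and is the same for $\tilde{m}$ and $\bar{m}$ (to order $ε^2$), it does not alter the leading-order equality $\tilde{g}_1 = \bar{g}_1$ and is harmlessly absorbed into the $O(ε^2)$ error estimates.
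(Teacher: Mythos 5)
Your proposal is correct and follows essentially the same route as the paper, whose proof is a one-line appeal to Moser's isotopy trick and the remarks preceding the lemma; you simply spell out the Moser equation, the linearity argument giving $\tilde{g}_1=\bar{g}_1$, and the deduction of items (3) and (4). Your handling of the total-mass obstruction (shrinking the section and interpolating with a bump function near the boundary) is exactly the device the paper uses in the proof of Lemma~\ref{lem:first-order-averaging-transfm}, so it is consistent with the author's intent even though the published proof of this lemma does not mention it.
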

\begin{proof}
  The existence of $\tilde{g}_1$ (resp. $\bar{g}_1$) that satisfies
  condition \itref{it:symplectic-poincare-maps-2} is a consequence of
  Moser's isotopy trick~\cite{scopus2-s2.0-84968494100} and the
  remarks preceding this lemma. \itref{it:symplectic-poincare-maps-1}
  and \itref{it:symplectic-poincare-maps-4} likewise follow from the
  same remarks. \itref{it:symplectic-poincare-maps-3} is a simple
  consequence of \itref{it:symplectic-poincare-maps-2} and the same
  remarks.
\end{proof}

\begin{remark}
  \label{rem:symplectic-isotopy}
The family of symplectic maps, $ε \to \tilde{ψ}_{ε}$ (resp.
$ε \to \bar {ψ}_{ε}$), is a symplectic isotopy to the
identity. Therefore, $\tilde{ψ}_{ε}$ (resp. $\bar {ψ}_{ε}$) is the
time-$2 π$ map of a time-dependent hamiltonian vector field
$\tilde{R}_{ε}$ (resp. $\bar {R}_{ε}$) on $(S, ω_{β})$. Indeed, the
vector field may be constructed from a hamiltonian suspension flow:
define an equivalence relation $\sim$ on $M=\R × \R × S$ by
$(k,t+2 π,s) \sim (k,t,\tilde{ψ}_{ε}(s))$ for all
$(k,t,s) ∈ \R × \R × S$. The symplectic form
$Ω_{β} = -\D{k} ∧ \D{t} + ω_{β}$ is invariant, as is the hamiltonian
vector field $\tilde{X} = ∂_t$ with hamiltonian $\tilde{H}=k$. The
symplectic manifold $(M/\hspace{-.5em}\sim, Ω_{β})$ is symplectomorphic to
$\R × \R/2 π \Z × S$ via a ``near identity'' symplectomorphism. The
pullback of $\tilde{X}$ (resp. $\tilde{H}$) is $∂_t + \tilde{R}_{ε}$
(resp. $\tilde{H}_{ε,β} = k + \tilde{G}_{ε,β}$) where $\tilde{R}_{ε}$
annihilates both $\D k$ and $\D t$. It is clear that
\begin{align}
  \label{eq:pse}
  \tilde{R}_{ε} &= \dfrac{ε}{H_I} \left( \bar{b}_0\, ∂_I + \bar{c}_0\, ∂_{ξ} \right) + O(ε^2) \\\notag
  &=: ε \bar  {R}_0 + O(ε^2),
\end{align}
and that $\bar {R}_{ε} = ε \bar {R}_0 + O(ε^2)$, too. Of course, the
second-order remainder terms differ and may depend on $(k,t)$, too. In
the sequel, one normalizes the family of hamiltonian functions by
requiring them to vanish identically at some fixed point in $S$.
\end{remark}

\begin{lemma}
  \label{lem:ave-transformation}
  The hamiltonian function $\tilde{G}_{ε,β} = ε \tilde{G}_{0,β} + O(ε^2)$ (resp. $\bar  {G}_{ε,β} = ε \bar  {G}_{0,β} + O(ε^2)$) of $\tilde{R}_{ε}$ (resp. $\bar  {R}_{ε}$) satisfies
  \begin{enumerate}
  \item\label{it:ave-transformation-1} $\tilde{G}_{0,β} = \bar  {G}_{0,β}$; and
  \item\label{it:ave-transformation-2}
    $
      \bar  {G}_{0,β}(I,ξ) = \displaystyle\int\limits_{d_-}^{ξ} Z(β)^{-1}\, \exp(-β G(I,x))\, \bar{b}_0(I,x)\, \D{x} \bmod I.
    $
  \end{enumerate}
\end{lemma}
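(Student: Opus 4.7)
The plan is to compute $\bar{G}_{0,\beta}$ directly from the symplectic identity $d\bar{G}_{0,\beta} = \iota_{\bar{R}_0}\omega_\beta$ on $(S,\omega_\beta)$, and to deduce item~(1) as a corollary of the identification of leading-order vector fields given in Remark~\ref{rem:symplectic-isotopy} together with equation~\eqref{eq:pse}.

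For item~(1): equation~\eqref{eq:pse} shows that both $\tilde{R}_\varepsilon$ and $\bar{R}_\varepsilon$ have leading term $\varepsilon\bar{R}_0$ with $\bar{R}_0 = H_I^{-1}(\bar{b}_0\,\partial_I + \bar{c}_0\,\partial_\xi)$. Under the normalization convention of Remark~\ref{rem:symplectic-isotopy}, a time-dependent Hamiltonian vector field uniquely determines its Hamiltonian function, so the $O(\varepsilon)$ parts agree: $\tilde{G}_{0,\beta} = \bar{G}_{0,\beta}$.

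For item~(2): substituting $\omega_\beta = Z(\beta)^{-1} H_I e^{-\beta G}\, dI\wedge d\xi$ and the expression for $\bar{R}_0$ yields
\[
\iota_{\bar{R}_0}\omega_\beta \;=\; Z(\beta)^{-1} e^{-\beta G}\bigl(\bar{b}_0\,d\xi - \bar{c}_0\,dI\bigr).
\]
Before integrating, I would verify that this $1$-form is closed. Differentiating produces the obstruction
\[
\bar{b}_{0,I} + \bar{c}_{0,\xi} - \beta\bigl(G_I\bar{b}_0 + G_\xi\bar{c}_0\bigr) \;=\; 0,
\]
which is exactly the averaged invariance identity~\eqref{eq:invariant3} (applied to the leading-order averages, since $\bar{b}=\bar{b}_0$ and $\bar{c}=\bar{c}_0$ to leading order in $\varepsilon$). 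With closedness established, integrating $\partial_\xi\bar{G}_{0,\beta} = Z(\beta)^{-1} e^{-\beta G}\bar{b}_0$ in $\xi$ from the reference value $d_-$ yields the formula in the statement; the ``$\bmod I$'' records the residual freedom to shift $\bar{G}_{0,\beta}$ by an arbitrary function of $I$ alone (equivalently, the choice of $\xi$-basepoint is augmented by a choice of $I$-dependent integration constant).

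The main technical observation is the identification of the closedness obstruction with the averaged invariance~\eqref{eq:invariant3}: without this coincidence, the $\xi$-antiderivative would fail to satisfy the companion identity $\partial_I\bar{G}_{0,\beta} = -Z(\beta)^{-1} e^{-\beta G}\bar{c}_0$, and no globally defined Hamiltonian could exist. Once this is in hand, the rest of the argument is a direct unwinding of the definitions from equation~\eqref{eq:pse} and Lemma~\ref{lem:symplectic-poincare-maps}.
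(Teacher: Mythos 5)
Your proposal is correct and follows the same route as the paper: item~(1) from the agreement of $\tilde{R}_{ε}$ and $\bar{R}_{ε}$ to second order in $ε$ (via \eqref[1]{eq:pse} and the normalization of Remark~\ref{rem:symplectic-isotopy}), and item~(2) by computing $\iota_{\bar{R}_0}ω_{β}$ and integrating in $ξ$. Your explicit verification that the resulting $1$-form is closed, with the obstruction identified as the averaged invariance identity \eqref[1]{eq:invariant3} at leading order, is a detail the paper leaves implicit but is exactly the right justification for the existence of the antiderivative.
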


\begin{proof}
  Condition~\itref{it:ave-transformation-1} follows from the equality of $\tilde{R}_{ε}$ and $\bar  {R}_{ε}$ to second-order in $ε$.
  The formula for $\bar{G}_{0,β}$ follows from computing the interior product of $\bar  {R}_{ε}$ with the symplectic form $ω_{β}$.
\end{proof}

\subsection{Critical points of the averaged hamiltonian}
\label{sec:crit-pts-ave-ham}

In lemmas~\ref{lem:ave-crit-pt} \& \ref{lem:ave-crit-pt-index}, one
uses that the Poincar{é} section $S_{c,d}$ is connected. This follows
from the connectedness of $P_{c,d}$ in
definition~\ref{def:thermostat}.

\begin{lemma}
  \label{lem:ave-crit-pt}
  $\bar{G}_{0,β}$ has a critical point $(I_0,ξ_0)$ in the interior of $S_{c,d}$.
\end{lemma}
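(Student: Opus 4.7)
The strategy is to recast the claim as a zero-finding problem for a planar vector field and then apply topological degree theory. By Remark~\ref{rem:symplectic-isotopy} and Lemma~\ref{lem:ave-transformation}, the interior critical points of $\bar{G}_{0,\beta}$ on $S_{c,d}$ correspond exactly to the zeros of the planar vector field $V := (\bar{b}_0, \bar{c}_0)$ on $S_{c,d}$ in $(I,\xi)$-coordinates, since $H_I > 0$ throughout the regular-value set $P_{c,d}$ and $\bar R_0 = (1/H_I)(\bar b_0\,\partial_I + \bar c_0\,\partial_\xi)$. Hence it suffices to find an interior zero of $V$ in the topological disk $S_{c,d}$ (which is a disk by the connectedness of $P_{c,d}$ noted in the sentence preceding the lemma).

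I would then argue by contradiction using Brouwer degree. Suppose $V$ is nowhere zero on $\overline{S_{c,d}}$. Then $V/|V|:\partial S_{c,d}\to S^1$ extends continuously to $\overline{S_{c,d}}$, so its degree on $\partial S_{c,d}$ is zero. The goal is to show the sign conditions of Definition~\ref{def:thermostat}(3) force this degree to equal $\pm 1$. Part~(a) says the mean of $\bar{c}_0$ switches sign between the left edge $\{H=c_-\}$ and the right edge $\{H=c_+\}$; part~(b) gives the analogous switch for the $H_I$-weighted mean of $\bar{b}_0$ between the bottom and top edges $\{\xi=d_\pm\}$. These conditions encode a vortex-like boundary structure for $V$. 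Using the openness of condition~(3) in $(c_\pm,d_\pm)$ observed just after Definition~\ref{def:thermostat}, together with continuity of $V$, I would shrink the rectangle slightly so that on each new edge the relevant component of $V$ has constant sign --- $\bar c_0 < 0$ on the left, $\bar c_0 > 0$ on the right, $\bar b_0 > 0$ on the bottom, $\bar b_0 < 0$ on the top. Traversing $\partial S_{c,d}$ counter-clockwise then makes $V$ wind once counter-clockwise about the origin, so the degree is $+1$, contradicting the hypothesis of no interior zero.

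The main obstacle is executing the reduction from integrated to pointwise sign conditions after shrinking. Continuity alone does not suffice: one can construct planar vector fields on a rectangle satisfying the integrated conditions but having no interior zero, so something more must enter. What rescues the argument is the hamiltonian divergence-free constraint $\partial_I(e^{-\beta G}\bar{b}_0)+\partial_\xi(e^{-\beta G}\bar{c}_0)=0$, inherited from the invariance of $\omega_\beta$ under $\bar R_0$ in Lemma~\ref{lem:symplectic-poincare-maps}, which constrains the zero loci of $\bar{b}_0$ and $\bar{c}_0$ and rules out the above pathologies. Combining this constraint with a Fubini/IVT argument along a one-parameter family of shrinkings of the short edges should locate a nearby edge on which $\bar{c}_0$ has constant sign, and likewise for $\bar{b}_0$ along the long edges. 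Once the pointwise sign conditions are in hand, the winding-number computation is immediate.
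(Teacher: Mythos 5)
Your core argument is the paper's argument: identify interior critical points of $\bar{G}_{0,\beta}$ with interior zeros of the planar field $(\bar{b}_0,\bar{c}_0)$ (legitimate, since $H_I\neq 0$ and $e^{-\beta G}\neq 0$ on $P_{c,d}$), and then force such a zero from the sign behaviour on the four edges of the rectangle by a degree-type argument. The paper packages the second step as a Poincar\'e--Miranda statement (Lemma~\ref{lem:top-crit-pt}) rather than a winding-number computation, but these are the same argument.

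The divergence is in how you read condition~(3) of Definition~\ref{def:thermostat}. The paper treats ``the average value of $\kp{\D\xi}{Τ_0}$ on $P_{c,d}\cap H^{-1}(c_\pm)$'' as the $\theta$-average only, i.e.\ as a function of $\xi$ on each vertical edge (and of $H$ on each horizontal edge) having a constant sign there; its proof then records exactly the pointwise inequalities $s\,\bmean{b}(u,d_s)>0$ and $s\,\bmean{c}(c_s,v)>0$ and applies the topological lemma directly, with the ``without loss of generality'' covering only the choice of which sign sits on which edge. This pointwise reading is the one actually verified in all the examples of \S\ref{sec:ex} (e.g.\ $\mean{C}_0=Κ-T$ changes sign once across the interval of regular values, and $B_1=-\xi$ has a fixed sign on each of $\xi=d_\pm$). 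Under that reading, your ``main obstacle'' does not arise and your proof collapses to the paper's. Under your weaker, fully integrated reading, your observation that the integrated conditions alone do not force a zero is correct, but the bridge you propose is not a proof: the pointwise relation $\partial_\xi\bar{G}_{0,\beta}\propto\bar{b}_0$, $\partial_I\bar{G}_{0,\beta}\propto-\bar{c}_0$ (equivalently the divergence-free constraint) does not by itself prevent, say, $\bar{c}_0$ from changing sign along every vertical line near $H=c_-$, so the asserted Fubini/IVT selection of a nearby edge of constant sign is not established, and you give no argument for it beyond ``should locate.'' To make your version rigorous you would either have to adopt the paper's pointwise interpretation of Definition~\ref{def:thermostat}(3), or supply an actual proof of the integrated-to-pointwise reduction; as written, that step is a genuine gap.
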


\begin{proof}
  Let $\bmean{b}(H,ξ) = \bar{b}_0(I(H),ξ)$ and similarly for
  $\bmean{c}$. The averaged value of $\kp{\D{H}}{Τ}$
  (resp. $\kp{\D{ξ}}{Τ}$) is $H_I \bar{b}_0 + O(ε)$ (resp.
  $\bar{c}_0 + O(ε)$). We will assume, without loss of generality,
  that
  \begin{enumerate}
  \item $s \bmean{b}(u,d_s) > 0$ for all $s ∈ \set{+,-}$, $u ∈
    [c_-,c_+]$; and
  \item $s \bmean{c}(c_s,v) > 0$ for all $s ∈ \set{+,-}$, $v ∈
    [d_-,d_+]$.
  \end{enumerate}
  Define the map $r=r(H,ξ)$, by
  \begin{align}
    \label{eq:r}
    r(H,ξ) &= (\bmean{c}(H,ξ), \bmean{b}(H,ξ)), &&& r : W → \R ²,
  \end{align}
  $W = [c_-,c_+] × [d_-,d_+]$.
  By definition \ref{def:thermostat} and the fact that $H_I$ does not
  change sign, for all $ε$ sufficiently small,
  $r(∂ W) ⊂ \R ² - \set{0}$. Since $W$ is affinely homeomorphic to the
  square $[-1,1] × [-1,1]$, the present lemma now follows from the
  topological lemma \ref{lem:top-crit-pt}.
\end{proof}

\def\newGamma{{\mathrm{C}}}
\let\oldGamma=\Gamma
\let\Gamma=\newGamma
\begin{lemma}
  \label{lem:ave-crit-pt-index}
  Assume that the critical points of $\bar{G}_{0,β}$ are isolated and
  that for some $Β,Γ ∈ \set{-1,1}$,
    \begin{enumerate}
    \item $s\, Β\, \bmean{b}(u,d_s) > 0$ for all $s ∈ \set{+,-}$, $u ∈ [c_-,c_+]$; and
    \item $s\, Γ\, \bmean{c}(c_s,v) > 0$ for all $s ∈ \set{+,-}$, $v ∈ [d_-,d_+]$.
  \end{enumerate}
  Then the sum of the indices of the critical points of $\bar{R}_0$ in
  $S_{c,d}$ is $-Β Γ$. In particular, if the
  critical points are non-degenerate and $Β Γ=-1$, then there is an
  elliptic critical point.
\end{lemma}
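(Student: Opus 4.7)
The plan is to apply Poincar{é}--Hopf on the topological disk $S_{c,d}$: for a planar vector field with isolated zeros in the interior and no zeros on the boundary, the sum of interior indices equals the winding number of the Gauss map $X/|X|:\partial S_{c,d}\to S^{1}$. I will compute this winding number directly from the sign hypotheses.

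First, by \eqref{eq:pse} one has $\bar{R}_0 = H_I^{-1}\left(\bar{b}_0\,∂_I + \bar{c}_0\,∂_{ξ}\right)$. Since $H_I>0$ on $P_{c,d}$, the zero locus of $\bar{R}_0$ coincides with $\{\bmean{b}=\bmean{c}=0\}$, and after passing to $(H,ξ)$ coordinates on $S_{c,d}$ the direction of $\bar{R}_0$ carries the same pair of signs as $(\bmean{b},\bmean{c})$ at every point. Hypotheses (1) and (2) then force $\bmean{b}\neq 0$ on the top and bottom edges of $W=[c_-,c_+]×[d_-,d_+]$ and $\bmean{c}\neq 0$ on its left and right edges, so the Gauss map is well defined on $\partial W$.

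Next, traversing $\partial W$ counterclockwise and tracking $(\bmean{b},\bmean{c})$: by (1), $\mathrm{sign}(\bmean{b})$ equals $+Β$ on the top edge and $-Β$ on the bottom, confining the direction to a fixed half-plane along each; by (2), the same holds for $\mathrm{sign}(\bmean{c})$ on the right ($+Γ$) and left ($-Γ$) edges. The four corners of $W$ therefore map to four distinct open quadrants of $\R^{2}\setminus\{0\}$, and each edge contributes a quarter turn whose sense is dictated by $Β$ and $Γ$. A short case check --- equivalently, noting that toggling either $Β$ or $Γ$ reverses the half-plane on the corresponding pair of edges and hence reverses the overall sense of rotation --- gives that the total winding number equals $-ΒΓ$.

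Finally, because $\bar{R}_0$ is the hamiltonian vector field of $\bar{G}_{0,β}$ for the non-degenerate form $ω_{β}$ (Remark~\ref{rem:symplectic-isotopy}), a non-degenerate critical point of $\bar{G}_{0,β}$ is either an elliptic center (definite Hessian, index $+1$) or a hyperbolic saddle (indefinite Hessian, index $-1$). When $ΒΓ=-1$ the index sum is $+1$, so at least one critical point must be a center, i.e.\ elliptic. The main obstacle I anticipate is bookkeeping the orientations: checking that the orientation on $\partial S_{c,d}$ used for the winding-number computation is compatible with the symplectic orientation induced by $ω_{β}$ (the pullback through $H_I>0$), and that each edge's quarter-turn genuinely rotates in the asserted sense rather than the opposite one. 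Beyond that, the argument is routine planar degree theory.
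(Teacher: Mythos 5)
Your argument is correct and is essentially the paper's own proof with the details written out: the paper likewise reduces the index sum to the degree of the boundary map $r=(\bmean{c},\bmean{b})$ on $∂W$ (Lemmas~\ref{lem:ave-crit-pt} and \ref{lem:top-crit-pt}) and reads off $-Β Γ$ from the sign hypotheses, while your quadrant-by-quadrant winding-number computation simply makes explicit the one line the paper leaves to the reader. The orientation worry you flag at the end is harmless: reversing the orientation of $S_{c,d}$ reverses both the boundary traversal and the sense in which angles are counted, so the winding number, and hence the index sum, is orientation-independent.
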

\begin{proof}
  From lemma \ref{lem:ave-crit-pt}, there is at least one critical
  point in the pre-image of $W$. The sum of the indices of the
  critical points is $-Β Γ × \degree{r|∂ W} = -Β Γ$.
\end{proof}

\begin{lemma}
  \label{lem:top-crit-pt}
  Let $W = [-1,1] × [-1,1] ⊂ \R ²$ and $f : W → \R ²$ be a continuous
  map such that
  \begin{align}
    \notag
    f(u,v) &= (x(u,v),y(u,v)) &\text{ and } s y(u,s) > 0,\ s x(s,v) > 0 \\\label{eq:top-crit-pt}
           &                  &\text{ where } s ∈ \set{1,-1},\ u,v ∈ [-1,1].
  \end{align}
  Then, $0 ∈ f(W)$.
\end{lemma}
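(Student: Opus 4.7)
The plan is to recognize this as a two-dimensional instance of the Poincaré–Miranda theorem and to prove it by a degree (equivalently, winding number) argument on the boundary.

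First, I would observe that $f$ is nonzero on $\partial W$. On each of the four edges one of the two coordinates of $f$ has a strict sign by hypothesis: on the edge $v=s\in\set{-1,1}$ the second coordinate $y(u,s)$ has sign $s$, and on $u=s$ the first coordinate $x(s,v)$ has sign $s$. Hence $f\rvert_{\partial W}$ defines a continuous map $\partial W\to\R^2\setminus\set{0}$.

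Second, I would show that this boundary map is homotopic in $\R^2\setminus\set{0}$ to the identity inclusion $\partial W\hookrightarrow\R^2\setminus\set{0}$ via the straight-line homotopy
\[
    H_t(u,v) = (1-t)\, f(u,v) + t\,(u,v), \qquad t\in[0,1].
\]
On the face $v=s$, the second coordinate of $H_t$ is $(1-t)\,y(u,s)+t\,s$; both $y(u,s)$ and $s$ have sign $s$, so this expression is nonzero and has sign $s$. A symmetric check applies on $u=s$. Hence $H_t(\partial W)\subset \R^2\setminus\set{0}$ for every $t$, and the boundary inclusion $\partial W\hookrightarrow\R^2\setminus\set{0}$ has winding number $1$ about the origin, so $\deg(f\rvert_{\partial W})=1$.

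Third, suppose for contradiction that $0\notin f(W)$. Then $f/\norm{f}$ is a well-defined continuous map $W\to S^1$, so its restriction to $\partial W$ extends across the contractible disk $W$. Any such restriction is nullhomotopic, hence of degree $0$, contradicting the previous step. Therefore $0\in f(W)$.

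The only potential pitfall is keeping sign conventions consistent in the homotopy argument, which is routine; the rest is a standard application of Brouwer degree, and one could equally cite the Poincaré–Miranda theorem directly.
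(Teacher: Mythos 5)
Your proof is correct, and it supplies exactly what the paper does not: the paper's own ``proof'' of this lemma is the single sentence ``This is clear; see figure,'' accompanied by a picture of the domain and image of $f$, so there is no written argument to compare against step by step. Your degree-theoretic argument is the standard rigorous justification (this is the $n=2$ case of the Poincar\'e--Miranda theorem, as you say). Each step checks: on the edge $v=s$ the convex combination $(1-t)\,y(u,s)+t\,s$ has sign $s$ because both terms are weakly of sign $s$ and at least one of the coefficients $1-t$, $t$ is positive, so the straight-line homotopy stays in $\R^2\setminus\set{0}$ on $\partial W$; the inclusion of $\partial W$ has winding number $1$ about the origin; and a map on $\partial W$ that extends continuously over the contractible square $W$ into $\R^2\setminus\set{0}$ must have degree $0$. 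The two facts are incompatible, so $0\in f(W)$. The only thing your write-up leaves tacit is the (standard) fact that the winding number is a homotopy invariant of maps $\partial W\to\R^2\setminus\set{0}$, which is what lets you transfer the value $1$ from the inclusion to $f|_{\partial W}$; this is certainly at the level of rigor the paper intends. In short: same result, but you have written the proof the paper waves at with a figure.
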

\begin{proof}
  This is clear; see figure~\ref{fig:crit-pt}.
  \begin{figure}
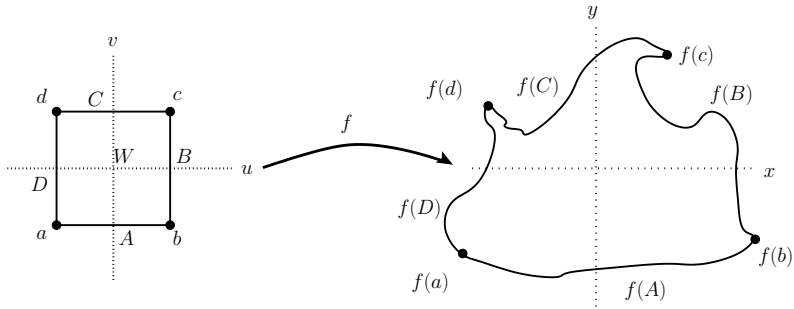

    \label{fig:crit-pt}
    \caption{The domain and image of $f$ in Lemma~\ref{lem:top-crit-pt}.}
    \ltxfigure{crit-pt.tex}{0.8\textwidth}{!}
  \end{figure}
\end{proof}

\begin{theorem}
  \label{thm:main-thm}
  Assume that $H$, $G$ and $Τ$ are $C^r$ for some $r>7$ and either
  \begin{enumerate}
  \item\label{cond:thm-main-thm-1} that conditions (1--2) of lemma
    \ref{lem:ave-crit-pt-index} hold with $Β Γ =-1$ and the period of
    the vector field $\bar{R}_0$ is not constant in a neighbourhood of
    the elliptic critical point $(I_0,ξ_0)$; or
  \item\label{cond:thm-main-thm-2} the averaged hamiltonian
    $\bar{G}_{0,β}$ is proper and $\bar{R}_0$ has a non-constant
    period,
  \end{enumerate}
  then for all $ε$ sufficiently small the Poincar{é} return map
  $\tilde{ψ}_{ε} : S → S$ has a positive measure set of invariant
  circles.
\end{theorem}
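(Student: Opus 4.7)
The plan is to view $\tilde{\psi}_\varepsilon$ as an $O(\varepsilon^2)$-perturbation of the time-$2\pi$ map of the autonomous hamiltonian vector field $\varepsilon\bar{R}_0$ on the symplectic surface $(S_{c,d},\omega_\beta)$, exhibit a non-degenerate twist structure on a suitable annulus, and conclude with Moser's invariant curve theorem. Combining Remark~\ref{rem:symplectic-isotopy}, Lemma~\ref{lem:ave-transformation} and Remark~\ref{rem:differentiability}, the Poincar{é} return map satisfies
\[
  \tilde{\psi}_\varepsilon \;=\; \Phi^{2\pi}_{\varepsilon\bar{R}_0} \;+\; O(\varepsilon^2)
\]
in the $C^{r-3}$ topology on $S_{c,d}$, where $\Phi^{t}$ denotes the flow of $\varepsilon\bar{R}_0$ with hamiltonian $\varepsilon\bar{G}_{0,\beta}$. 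It therefore suffices to produce an annulus of invariant circles for $\Phi^{2\pi}_{\varepsilon\bar{R}_0}$ carrying a genuine twist of a size that dominates the $O(\varepsilon^2)$ error.

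Under condition~\itref{cond:thm-main-thm-1} I would localize at the elliptic critical point $(I_0,\xi_0)$ provided by Lemma~\ref{lem:ave-crit-pt-index}. Since the linearization has a non-zero imaginary frequency, Birkhoff normal form for $\bar{G}_{0,\beta}$ to order $N$ on $(S,\omega_\beta)$ produces symplectic coordinates $(J,\varphi)$ in which $\bar{G}_{0,\beta} = \mathcal{E}(J) + O(J^{N+1})$ and the flow takes the near-integrable twist form
\[
  T_\varepsilon : (J,\varphi) \;\mapsto\; \bigl(J,\; \varphi + 2\pi\varepsilon\,\mathcal{E}'(J)\bigr) + O(\varepsilon J^{N}).
\]
The hypothesis that the period of $\bar{R}_0$ is non-constant near $(I_0,\xi_0)$ translates to $\mathcal{E}''$ not vanishing identically, so that on some annulus $A=\{J_1 \le J \le J_2\}$ the twist coefficient $\varepsilon\,\mathcal{E}''(J)$ is bounded away from zero. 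Under condition~\itref{cond:thm-main-thm-2} the construction proceeds globally: properness of $\bar{G}_{0,\beta}$ implies its regular level sets are smooth compact circles, on each of which $\bar{R}_0$ is periodic; action-angle coordinates conjugate $\bar{R}_0$ to $\mathcal{E}'(J)\,\partial_\varphi$ on any regular annulus, and the non-constant period hypothesis again delivers a subannulus with $\mathcal{E}''$ uniformly non-zero.

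With this twist structure in hand, the argument concludes by Moser's invariant curve theorem (in the form of Moser or R{ü}ssmann--Herman). After a rescaling of the action coordinate by a fixed power of $\varepsilon$ to normalize the twist to order one, the perturbation $\tilde{\psi}_\varepsilon - T_\varepsilon = O(\varepsilon^2)$ becomes small in a sufficiently high $C^k$ norm, and the theorem yields a Cantor family of Diophantine invariant essential circles of positive Lebesgue measure, persisting for all sufficiently small $\varepsilon$. Pulling back via $\tilde{g}_\varepsilon$ of Lemma~\ref{lem:symplectic-poincare-maps}, the corresponding invariant circles for the original return map $\tilde{\rho}_\varepsilon$ still fill a set of positive measure.

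The main obstacle I expect is bookkeeping the differentiability budget. Averaging costs roughly three derivatives (Remark~\ref{rem:differentiability}), Birkhoff normalization consumes a further fixed amount, and Moser's theorem in its classical smooth form requires several derivatives of twist-direction regularity for the perturbation; the assumption $r>7$ is chosen precisely to leave room for these losses. A secondary subtlety is verifying that the non-constant period hypothesis, which is only asserted qualitatively, is inherited by the annulus $A$ of positive Lebesgue measure on which the twist is uniformly non-degenerate; in the $C^r$ setting this follows from a genericity argument on one-dimensional subintervals, while in the real-analytic cases exploited in Section~\ref{sec:ex} it is automatic.
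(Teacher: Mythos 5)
Your proposal is correct and follows essentially the same route as the paper: pass to action-angle variables $(J,\rho)$ for the averaged hamiltonian (near the elliptic point in case (1), on a regular annulus in case (2)), read off the twist $2\pi\varepsilon\bar{G}_{0,\beta}''(J)$ from the non-constant period, and invoke an invariant-curve/degenerate KAM theorem since the $O(\varepsilon^2)$ remainder is of higher order than the $O(\varepsilon)$ twist, with the same differentiability bookkeeping forcing $r>7$. The only difference is cosmetic: the paper cites Arnol'd's properly-degenerate KAM proposition (and Herman/F\'ejoz and a twist-map theorem for finite smoothness) where you cite Moser's invariant curve theorem directly.
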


\begin{corollary}
  \label{cor:non-ergodcity}
  Assume the hypotheses of Theorem~\ref{thm:main-thm}.
  For all $ε$ sufficiently small, the thermostated vector field
  $Y_{ε}$ (\ref{def:thermostat}) possesses a positive measure set of
  invariant tori.
\end{corollary}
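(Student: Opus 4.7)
The plan is to unwind the chain of reductions that turned the original thermostated vector field $Y_\epsilon$ into the Poincar\'e return map $\tilde{\psi}_\epsilon$, and to check that at each stage the positive-measure family of invariant circles produced by Theorem~\ref{thm:main-thm} is transported to a positive-measure family of invariant $2$-tori. All of the requisite diffeomorphisms and measure identifications are already supplied by Lemmas~\ref{lem:first-order-averaging-transfm} and \ref{lem:symplectic-poincare-maps}, so the argument is essentially bookkeeping.

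I would proceed in four short steps. First, Theorem~\ref{thm:main-thm} furnishes a set $\mathcal{C}\subset S$ of positive $\omega_\beta$-measure consisting of invariant circles of $\tilde{\psi}_\epsilon$. Conjugating by the diffeomorphism $\tilde{g}_\epsilon$ of Lemma~\ref{lem:symplectic-poincare-maps}, which pulls $\D{\tilde{m}}_{\epsilon,\beta}$ back to $\omega_\beta$, carries $\mathcal{C}$ to a set $\tilde{g}_\epsilon(\mathcal{C})\subset S$ of positive $\D{\tilde{m}}_{\epsilon,\beta}$-measure, each of whose elements is an invariant circle of the Poincar\'e return map $\tilde{\rho}_\epsilon$ of $\tilde{Z}_\epsilon$.

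Second, I would suspend: because $S$ is a cross-section for the complete flow of $\tilde{Z}_\epsilon$ on $P_{c,d}$, each invariant circle of $\tilde{\rho}_\epsilon$ sweeps out an invariant $2$-torus in $P_{c,d}$, and by Fubini applied in flow-box coordinates (which are adapted to the fact that $\D{\tilde{\Lambda}}_{\epsilon,\beta}$ restricts to $\D{\tilde{m}}_{\epsilon,\beta}$ on $S$ and to $\D t$ along the orbits of $\tilde{Z}_\epsilon$), the union of these tori has positive $\D{\tilde{\Lambda}}_{\epsilon,\beta}$-measure in $P_{c,d}$. Since $\tilde{Z}_\epsilon$ and $\tilde{Y}_\epsilon$ differ by multiplication by the nowhere-vanishing scalar $\tilde{\lambda}_\epsilon$, they have identical orbits, and the invariant tori for $\tilde{Z}_\epsilon$ are invariant for $\tilde{Y}_\epsilon$ as well.

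Third, I pull back by $F_\epsilon^{-1}$. By Definition~\ref{def:first-order-averaging-transfm}, $F_\epsilon$ is a diffeomorphism of $P_{c,d}$ into $\pi^{-1}(L)\subset P$ that conjugates $Y_\epsilon$ to $\tilde{Y}_\epsilon$ and satisfies $F_\epsilon^*\D{\mu}_\beta = \D{\mu}_\beta$; consequently $F_\epsilon^{-1}$ maps the invariant tori of $\tilde{Y}_\epsilon$ onto invariant tori of $Y_\epsilon$ while preserving $\D{\mu}_\beta$-measure, delivering the required positive $\D{\mu}_\beta$-measure set of invariant $2$-tori for $Y_\epsilon$ in $P$. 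The main potential obstacle is the Fubini step, where one must verify that positivity of the planar measure of $\tilde{g}_\epsilon(\mathcal{C})$ in $S$ translates into positivity of the three-dimensional invariant-measure of its orbit under $\tilde{Z}_\epsilon$; this however is immediate from the flow-box decomposition of $\D{\tilde{\Lambda}}_{\epsilon,\beta}$ and the compactness of the solid torus $P_{c,d}$, so no new analytic input beyond the constructions of Section~\ref{sec:wcst} is needed.
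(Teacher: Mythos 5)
Your argument is correct and is exactly the chain of identifications the paper intends: the paper proves Theorem~\ref{thm:main-thm} and Corollary~\ref{cor:non-ergodcity} in a single combined proof and leaves the suspension step implicit, relying on precisely the conjugations $\tilde{g}_{\varepsilon}$ and $F_{\varepsilon}$ and the flow-box/Fubini observation you spell out. No gaps; your write-up simply makes explicit what the paper takes as routine bookkeeping.
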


\begin{proof}[Proof of Theorem \ref{thm:main-thm} \& Corollary~\ref{cor:non-ergodcity}]
  Let $(I_0,ξ_0)$ be either (1) an elliptic critical point; or (2) a
  regular point. In either case, there exists action-angle variables
  $(J,ρ)$ for $\tilde{G}_{0,β}$ which are defined in a neighbourhood
  of $(I_0,ξ_0)$ (with an unimportant singularity at $(I_0,ξ_0)$ in
  case~\ref{cond:thm-main-thm-1}). In these coordinates,
  $\tilde{G}_{0,β} = \tilde{G}_{0,β}(J)$. Since the period of
  $\tilde{R}_0$ is
  $2 π \left( \didi[{\tilde{G}_{0,β}}]{J} \right)^{-1}$, non-constancy
  of the period implies that there is some neighbourhood of
  $(I_0,ξ_0)$ on which $\frac{∂^2 \tilde{G}_{0,β} }{∂ J^2}$ is not
  identically zero.

  It follows that in the coordinates $(k,t,J,ρ)$, the hamiltonian
  $\tilde{H}_{ε,β} = k + \tilde{G}_{ε,β} = k + ε \bar {G}_{0,β}(J) +
  O(ε^2)$ constructed in remark~\ref{rem:symplectic-isotopy} is in the
  form described by \cite[eq. 2.2.7]{MR0170705}. Proposition II in
  chapter II, \S 2 of~\cite{MR0170705} implies that if
  $\tilde{G}_{ε,β}$ is real-analytic, then there exists a positive
  measure set of invariant circles of $\tilde{ψ}_{ε}$ in a
  neighbourhood of the point $(I_0,ξ_0)$.

  The hypothesis of real-analyticity may be relaxed to infinite
  differentiability of $\tilde{H}_{ε,β}$ due to the work of
  Herman/F{é}joz~\cite[Th{é}or{è}me 57]{MR2104595}. To obtain the
  result in $C^s$, one may follow the arguments in the proof
  of~\cite[Theorem 2]{MR2299758} and observe that in the angle-action
  coordinates $(ρ,J)$, the Poincar{é} return map
  $\tilde{ψ}_{ε}=(ρ,J+2 π ε \bar{G}_{0,β}'(J)) +
  O(ε^2)$. By~\cite[Theorem 2.11]{MR1829194} and the subsequent
  discussion, if $\tilde{ψ}_{ε}$ is $C^s$ for $s>4$, then the present
  theorem follows provided that $r$ is chosen correctly. Finally, by
  remark~\ref{rem:differentiability} and
  lemma~\ref{lem:symplectic-poincare-maps}, if $H$, $G$ and $Τ$ are
  $C^r$ in the coordinates $(p,q,ξ)$ on the extended phase space
  $\cotangent Σ × \R$, then $\tilde{ψ}_{ε}$ is $C^{r-3}$,
  so $r$ must exceed $7$.
\end{proof}

\begin{remark}
  \label{rem:about-degenerate-kam-theory}
  The proof of Theorem \ref{thm:main-thm} is similar to the proofs in
  \cite{MR2299758,MR2519685}. The distinctive aspect here is that the
  coarse, topological, properties of the thermostat vector field $Τ$
  (it tries to heat the mechanical system $H$ when $H$ is small, and
  tries to cool when $H$ is large) combined with the existence of a
  Poincar{é} section and the invariant Gibbs-Boltzmann measure force
  the existence of KAM tori. The version of (degenerate) KAM theory
  used here was pioneered by Arnol$'$d in his study of the $n+1$ body
  problem. In this variant, the unperturbed hamiltonian is integrable
  with invariant isotropic tori (``fast angles'') and the leading term
  in the perturbation supplies a complementary set of ``slow angles''
  after averaging over the fast angles~\cite[chapter I, \S
  8]{MR0170705}. Non-degeneracy conditions similar to the usual ones
  (e.g. the frequency map is a local diffeomorphism) play a similar
  role in the theory. While this aspect of the theory is solid,
  Arnol$'$d's application to the $n+1$-body problem was discovered to
  be flawed by Herman in the late 1990s. This stimulated the extension
  of the Nash-Moser approach along with weaker conditions on the
  frequency map to this setting as exposed by
  F{é}joz~\cite{MR2104595}. Subsequently, Chierchia \& Pusateri and
  Chierchia \& Pinzari improved on Arnol'd's real-analytic
  results~\cite{MR2505319,MR2684064,MR2836051}.
\end{remark}

\section{Examples}
\label{sec:ex}

In this section, Theorem \ref{thm:main-thm} is applied to a number of
single thermostats that appear in the literature. Throughout, it will
be assumed that $H$ is \defn{well-behaved}.

\subsection{Separable Thermostats}
\label{sec:separable-thermostats}

This section describes an abstract type of thermostat vector field
that satisfies the properties of \ref{def:thermostat}.
\begin{definition}[Separable Thermostat]
  \label{def:separable-thermostat}
  Let
  \begin{align}
    \label{eq:separable-thermostat}
    Τ &= A\, ∂_q + B\, ∂_p + C\, ∂_{ξ}, &&\textrm{ where}\\\notag
    A &= A_0(q,p)\, A_1(ξ),& 
                             B &= B_0(q,p)\, B_1(ξ), & C &= C_0(q,p)\, C_1(ξ).
  \end{align}
  If $\D{μ}_{β}$ \eqref{eq:dmu} is invariant for $Τ$ and if there is
  an interval of regular values $[c_-,c_+]$ for $H$ such that on a
  connected component $e$ of $H^{-1}([c_-,c_+])$,
  \begin{enumerate}
  \item\label{cond:1} $\mean{A_0 H_q}$ and $\mean{B_0 H_p}$ are both non-negative and at least one is positive;
  \item\label{cond:2} $\mean{C}_0$ changes sign; and
  \item\label{cond:3} both $A_1$ and $B_1$ are odd functions of $ξ$;
  \item\label{cond:4} $C_1$ is a positive function.
  \end{enumerate}
  then $Τ$ is called a \defn{separable thermostat} for $H$.
\end{definition}

\begin{theorem}
  \label{thm:separable-thermostat}
  Let $Τ$ be a separable thermostat for $H$ and assume, in addition,
  that on the same component $e$ of $H^{-1}([c_-,c_+])$,
  \begin{enumerate}
  \item\label{thm:separable-thermostat-cond1} $\left( \sign{A_1} + \sign{B_1} \right) \sign{ξ} < 0$;
  \item\label{thm:separable-thermostat-cond2} $\degree{\mean{C}_0|e}=1$;
  \item\label{thm:separable-thermostat-cond3} the period of $\bar{R}_0$ is not constant.
  \end{enumerate}
  Then, the conclusions of Theorem \ref{thm:main-thm} and Corollary
  \ref{cor:non-ergodcity} apply.
\end{theorem}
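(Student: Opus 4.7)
The plan is to apply Theorem~\ref{thm:main-thm} via its condition~\itref{cond:thm-main-thm-1}, producing an elliptic critical point of $\bar{R}_0$ in $S_{c,d}$ and combining it with the non-constant-period assumption \itref{thm:separable-thermostat-cond3}. The task therefore reduces to verifying the two sign hypotheses of Lemma~\ref{lem:ave-crit-pt-index} with $B\,Γ = -1$.

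First I would compute the averaged coefficients in angle-action coordinates. Since $H = H(I)$ on $π^{-1}(L)$, the identity $\dot{H}|_Τ = H_I\,b$ gives $b = (H_q A_0 A_1(ξ) + H_p B_0 B_1(ξ))/H_I$ from the separable structure~\eqref{eq:separable-thermostat}, while $c = C_0(q,p)\,C_1(ξ)$ directly. Averaging over $θ$ at fixed $(I,ξ)$ yields
\[
\bar{b}_0(I,ξ) = \frac{A_1(ξ)\,\overline{H_q A_0}(I) + B_1(ξ)\,\overline{H_p B_0}(I)}{H_I(I)}, \qquad \bar{c}_0(I,ξ) = \overline{C_0}(I)\,C_1(ξ).
\]

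The sign analysis on the edges of the box $[c_-,c_+] × [d_-,d_+]$ is then straightforward. Condition~\itref{cond:2} of Definition~\ref{def:separable-thermostat} combined with hypothesis~\itref{thm:separable-thermostat-cond2} makes $\overline{C_0}|_e$ cross zero transversally, so $c_\pm$ may be chosen with $\overline{C_0}(c_-) < 0 < \overline{C_0}(c_+)$; since $C_1 > 0$ (cond.~\itref{cond:4}), $s\,\bar{c}_0(c_s,v) > 0$ on the vertical edges, i.e.\ $Γ = +1$. For the horizontal edges, oddness of $A_1, B_1$ (cond.~\itref{cond:3}) with hypothesis~\itref{thm:separable-thermostat-cond1} forces $A_1(ξ), B_1(ξ) \le 0$ (with at least one strictly negative) for $ξ > 0$ and the opposite for $ξ < 0$; combined with the non-negativity of $\overline{H_q A_0}$ and $\overline{H_p B_0}$ and the fact that at least one is positive on $e$ (cond.~\itref{cond:1}), and with $H_I > 0$, one obtains $\bar{b}_0(u,d_+) < 0 < \bar{b}_0(u,d_-)$ for sufficiently small $d_- < 0 < d_+$. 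Hence $s\,B\,\bar{b}_0(u,d_s) > 0$ with $B = -1$, so $B\,Γ = -1$.

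With the sign hypotheses established, Lemma~\ref{lem:ave-crit-pt-index} produces critical points of $\bar{R}_0$ in $S_{c,d}$ whose total index is $+1$, yielding an elliptic fixed point (non-degeneracy can be arranged by slight perturbation of $c_\pm, d_\pm$). Hypothesis~\itref{thm:separable-thermostat-cond3} supplies the non-constant period, Theorem~\ref{thm:main-thm}\itref{cond:thm-main-thm-1} then yields a positive-measure set of invariant circles of $\tilde{ψ}_ε$, and Corollary~\ref{cor:non-ergodcity} lifts these to invariant tori of $Y_ε$. The main obstacle is sign compatibility in $\bar{b}_0$: strict negativity at $ξ = d_+$ requires that the strictly-negative coefficient among $A_1(d_+), B_1(d_+)$ is paired with a positive entry among $\overline{H_q A_0}, \overline{H_p B_0}$. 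For the four thermostats of Theorem~\ref{thm:intro-thm-1} this alignment is automatic (usually $A_0 \equiv 0$, so only the $B$-channel contributes), but in the abstract setting a small additional compatibility assumption, or a perturbation argument exploiting the non-genericity of a simultaneous degenerate alignment, seems to be needed.
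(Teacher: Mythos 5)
Your proof is correct and follows the same route as the paper's: the paper's one-paragraph proof likewise observes that hypothesis~(2) together with condition~(2) of Definition~\ref{def:separable-thermostat} makes $\mean{C}_0$ change from negative to positive, verifies the two sign hypotheses of Lemma~\ref{lem:ave-crit-pt-index} with the sign constants equal to $-1$ and $+1$ (so their product is $-1$), and then invokes Theorem~\ref{thm:main-thm}(1) via the non-constant-period hypothesis~(3). The alignment caveat you flag at the end (a vanishing factor among $A_1,B_1$ paired with the strictly positive one of $\mean{A_0 H_q},\mean{B_0 H_p}$) is a fair observation about the fully abstract setting, but the paper's own proof passes over it in silence, so your write-up is, if anything, the more careful of the two.
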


\begin{proof}
  Hypothesis~\ref{thm:separable-thermostat-cond2} of the theorem
  combined with condition~\ref{cond:2} of
  definition~\ref{def:separable-thermostat} implies that $\mean{C}_0$
  changes from negative to positive. Therefore the stated hypotheses
  imply conditions (1--2) of lemma \ref{lem:ave-crit-pt-index} hold
  with $[d_-,d_+]=[-1,1]$ and the sign terms $Β=-1$, $Γ=1$.
\end{proof}

\let\Gamma=\oldGamma
\subsection{Nos{é}-Hoover Thermostat}
\label{sec:hoover-separable}
In this case~\cite{hoover}, the thermostat vector field is separable
and
\begin{align}
  \notag
  G_{β} &= H + ½ ξ ²,  &&&&& A_0,A_1 &= 0,\\
  \label{eq:hoover-separable}
  B_0 &= p, & B_1 &= -ξ, & C_0 &= p · H_p - T, & C_1 &= 1.
\end{align}
If one lets $Κ = \mean{B_0 H_p} = \mean{p · H_p}$ (twice the averaged
kinetic energy), then $\mean{C} = \bar{c} = Κ - T$ and
$H_I \bar{b} = - Κ\, ξ$. Note that a straightforward change of
variables~\cite[eq. 18]{intech} shows that
\begin{equation}
  \label{eq:nh-mean-ke}
  Κ = \mean{p · H_p} = \dfrac{1}{2 π} \oint p \ddt[q]{t} \D{θ} = I · H_I.
\end{equation}
(Compare to~\cite[eqs 34--5]{MR2519685}.) From
lemma~\ref{lem:ave-transformation}, it follows that the hamiltonian of
the vector field $\bar{R}_0$ is
\begin{equation}
  \label{eq:nh-g0b}
  \bar{G}_{0,β} = \left( β Z(β) \right)^{-1}\, I \, \exp \left( -β G_{β} \right).
\end{equation}
The integral found in \cite{MR2519685,PhysRevE.75.040102} can be
obtained by noting that $\bar{R}_0$ is also hamiltonian with respect
to the area form
$T ω_{β}/\bar{G}_{0,β} = H_I\, I^{-1}\, \D{I} ∧ \D{ξ}$. If one changes
to Darboux coordinates $(σ,ξ)$ where
\begin{equation}
  \label{eq:sigma-of-i}
  σ = \int_{I_0}^I H_I\, \D{I}/I
\end{equation}
and $(I_0,ξ_0=0)$ is the critical point (where $I_0 · H_I(I_0) = T$),
then the hamiltonian $\bar{G}_{0,T}$ of $\bar{R}_0$ with respect to
$\D{ξ} ∧ \D{σ}$ is
\begin{equation}
  \label{eq:nh-g0}
  \bar{G}_{0,T} = \underbrace{ξ ²/2}_{F(ξ)} + \underbrace{H(I(σ)) - T\, \ln I(σ)}_{U_T(σ)}.
\end{equation}
\begin{remark}
  \label{rem:comments-on-hoovers-thermostat-in-lit}
  A formula similar to \eqref{eq:nh-g0} appears in~\cite[eq.s 33--35,
  41]{MR2519685}, although the explicit form of $U_T$ is not there
  because the formula \eqref{eq:nh-mean-ke} was not used by the
  authors. A similar expression appears in
  \cite[p. 040102-3]{PhysRevE.75.040102} for the special case of the
  harmonic oscillator. \cite{MR2519685} defines the canonical
  coordinates $(ξ,σ)$ via $I = I_0\, e^{σ}$. This is due to an
  oversight: although the authors mention normalization of the return
  time to the Poincar{é} section, in their calculations the
  normalization is omitted with the consequence that the symplectic
  form used there omits the $H_I$ term. The reader may trace this back
  to the use of the vector field $\tilde{Y}_{ε}$ rather than the
  normalized vector field
  $\tilde{Z}_{ε}$~\eqref{eq:volume-time-rescaling}. Consequently, the
  right-hand side of the differential equations
  in~\cite[eq. 41]{MR2519685} need to be divided by
  $ω(a)$~\cite[eq. 37]{MR2519685} ($H_I$ in terms of the present
  paper). This is mentioned only because the proof of the following
  theorem is significantly less involved if one omits the $H_I$ term
  in the definition of $σ$~\eqref{eq:sigma-of-i}.
\end{remark}
\begin{lemma}
  \label{lem:nh-is-a-separable-thermostat}
  Let $H : \cotangent Σ → \R$ be well-behaved. Then, for all $T>0$,
  the Nos{é}-Hoover thermostat $Τ$ is a separable thermostat in the
  sense of definition~\ref{def:separable-thermostat}.
\end{lemma}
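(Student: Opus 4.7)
The plan is to verify the four structural conditions of Definition~\ref{def:separable-thermostat}, together with invariance and properness of the candidate measure, for the Nos\'e-Hoover data read off from~\eqref{eq:hoover-separable}. With $A_0\equiv A_1\equiv 0$, $B_1(\xi)=-\xi$ odd, and $C_1\equiv 1>0$, conditions (3) and (4) are immediate. Properness of $G_\beta = H + \tfrac12 \xi^2$ follows from properness of $H$. Invariance of $d\mu_\beta = Z_1(\beta)^{-1} e^{-\beta G_\beta}\,dq\,dp\,d\xi$ under $Y_\epsilon = X_H + \epsilon Τ$ reduces to the identity $\operatorname{div}(Y_\epsilon) = \beta\langle dG_\beta, Y_\epsilon\rangle$; a direct computation shows both sides equal $-\epsilon\xi$, using $\beta T = 1$.

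The substance of the lemma is conditions (1) and (2). Since $A_0 = 0$ makes $\overline{A_0 H_q}=0$ automatically non-negative, what is needed is an interval $[c_-,c_+]$ of regular values of $H$ and a component $e \subset H^{-1}([c_-,c_+])$ on which $\overline{B_0 H_p} = \overline{p H_p} = I H_I = Κ(I) > 0$ and $\overline{C}_0 = Κ - T$ changes sign. Positivity of $Κ$ holds on any set disjoint from the vertex set of $Γ_H$, so the real content is the existence of an edge along which $Κ$ sweeps through the value $T$. I would prove the stronger statement, already alluded to after~\eqref{eq:kappa}, that $Κ$ maps some root edge of $Γ_H$ surjectively onto $[0,\infty)$.

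The key input is condition~\ref{it:h-is-well-behaved-unbounded-temp} with $k=1$: $\limsup_{\beta\downarrow 0}\gibbsexp{\beta}{p H_p} = +\infty$. Using angle--action coordinates on each edge and the identity $\overline{p H_p} = I H_I$ (from $\oint p\,dq = 2\pi I$ and $\tau = 2\pi/H_I$), Fubini yields
\[
  \gibbsexp{\beta}{p H_p} \;=\; Z(\beta)^{-1}\sum_{e}\int_{e} Κ_e(h)\,\tau_e(h)\,e^{-\beta h}\,dh.
\]
By conditions~\ref{it:h-is-well-behaved-Morse}+\ref{it:h-is-well-behaved-finite}, $Γ_H$ has finitely many edges, and $Κ$ is continuous and hence bounded on each bounded edge; those edges therefore contribute $O(1)$ in $\beta$ to the expectation. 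The divergence of the $\limsup$ is thus sourced by a root edge $γ^*$ along which $Κ$ is unbounded. Continuity of $Κ$ together with $Κ = 0$ at the finite endpoint of $γ^*$ (a local minimum where $I = 0$, or a saddle where $H_I \to 0$) then forces $Κ(γ^*) = [0,\infty)$ by the intermediate value theorem.

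Given any $T > 0$, pick $h^*$ in the interior of $γ^*$ with $Κ(h^*) = T$ and choose an interval $[c_-,c_+]$ of regular values of $H$ about $h^*$ with $(Κ(c_-)-T)(Κ(c_+)-T) < 0$; the component of $H^{-1}([c_-,c_+])$ inside $γ^*$ is the required $e$. The main obstacle is the argument of the previous paragraph: turning a global $\limsup$ hypothesis into pointwise surjectivity of $Κ$ along a single root edge via the edgewise decomposition of $\gibbsexp{\beta}{p H_p}$ and the elimination of bounded edges as possible sources of divergence.
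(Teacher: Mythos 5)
Your proof is correct and follows essentially the same route as the paper's: the paper also reduces everything to condition~(2) of Definition~\ref{def:separable-thermostat}, uses the identity $\mean{p\cdot H_p}=I\,H_I=Κ$ together with condition~\ref{it:h-is-well-behaved-unbounded-temp} (with $k=1$) to show $\gibbsexp{β}{Κ}$ is unbounded as $β\downarrow 0$, concludes that $Κ$ is unbounded and hence, since $Γ_H$ has finitely many compact edges and $Κ$ vanishes on the vertex set, that $Κ$ maps a root edge onto $[0,∞)$. Your edgewise decomposition of the expectation is a minor reorganization of the paper's global boundedness contradiction, and your explicit verification of invariance and properness only fills in what the paper declares ``clear.''
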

\begin{proof}
  All but condition~\ref{cond:2} is clear. To prove this, let's prove
  that $Κ$ surjects onto $[0,∞)$. Since $Κ$ maps the vertex set of
  $Γ_H$ to $0$, it suffices to prove $Κ$ is not bounded above.

  Assume that $Κ$ is bounded above by $T_1>0$ on $Γ_H$. Computation of
  the mean value of $Κ$ with respect to the Gibbs-Boltzmann measure
  $\exp(-β H) \D I \D{θ}/Z(β)$ on $Γ_H$ gives
  \begin{equation}
    \label{eq:nh-gibbs-mean-value}
    T_1 ≥ \gibbsexp{β}{Κ},
  \end{equation}
  where $T=1/β$. But since $\gibbsexp{β}{Κ} = \gibbsexp{β}{p · H_p}$
  and $H$ is well-behaved, by hypothesis,
  condition~\itref{it:h-is-well-behaved-unbounded-temp} of
  definition~\ref{def:h-is-well-behaved} implies the right-hand side
  of \eqref{eq:nh-gibbs-mean-value} is
  unbounded. Contradiction. Therefore, $Κ$ is not bounded above on
  $Γ_H$.

  Since $Γ_H$ has finitely-many compact edges and $Κ$ is continuous,
  $Κ$ must map one of the (at most two) non-compact edges onto
  $[0,∞)$.
\end{proof}
\begin{remark}
  \label{rem:nh-is-a-separable-thermostat}
  One can compute
  $\gibbsexp{β}{Κ} = T × \left(1 - \limsup_{H → ∞} I \exp(-β H)
  \right)$, which shows that it suffices to assume that
  $I = o(e^{β H})$ for all $β>0$, rather than
  condition~\ref{it:h-is-well-behaved-unbounded-temp} of
  definition~\ref{def:h-is-well-behaved}.
\end{remark}

\begin{theorem}
  \label{thm:nh-kam-nec}
  Let $H : \cotangent Σ → \R$ be a well-behaved, real-analytic
  hamiltonian. Then the period of $\bar{R}_0$ is not constant in a
  neighbourhood of the critical point $(I_0,0)$.
\end{theorem}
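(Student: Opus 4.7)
The plan is to argue by contradiction, exploiting the explicit form of $\bar G_{0,T}$ from \eqref{eq:nh-g0} together with the low-action asymptotic of $H$ at its Morse minimum. Suppose the period $\tau(E)$ of the orbits $\{\bar G_{0,T}=E\}$ near $(\sigma,\xi)=(0,0)$ were constant on a neighbourhood of $E=0$.

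In the Darboux coordinates of \eqref{eq:nh-g0}, $\bar G_{0,T}(\sigma,\xi)=\tfrac12\xi^2+U_T(\sigma)$ with $U_T(\sigma)=H(I(\sigma))-T\ln I(\sigma)$ real-analytic in $\sigma$. By Lemma~\ref{lem:nh-is-a-separable-thermostat}, $I_0$ lies on a root edge of $\Gamma_H$ along which $I$ ranges over $(0,\infty)$. First I establish that the domain of $U_T$ is a left-unbounded interval $(-\infty,\sigma_+)$ (with $\sigma_+\in(0,+\infty]$) on which $U_T$ is coercive: as $I\downarrow 0$, the Morse minimum of $H$ gives $H_I(I)\to\omega_{\min}>0$, so $\sigma=\int_{I_0}^I H_I(s)/s\,\D s\to-\infty$ while $-T\ln I\to+\infty$ dominates $H\to 0$; as $I\to\infty$, $H$ grows faster than $T\ln I$ since $K(I)=IH_I\to\infty$ forces $H_I>N/I$ eventually, for any $N$.

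The period $\tau(E)$ is real-analytic on the basin interval $(0,E^\ast)$ of $(0,0)$, where $E^\ast\in(0,+\infty]$ is either $+\infty$ or the value of $U_T$ at the nearest interior critical point. Constancy on a neighbourhood of $E=0$ therefore extends, by analyticity, to constancy on all of $(0,E^\ast)$. If $E^\ast<\infty$, the orbits accumulate on a separatrix through a critical point of $U_T$ and $\tau(E)\to+\infty$ at that separatrix, contradicting constancy. If $E^\ast=+\infty$, I use the low-action asymptotic: integrating $\sigma'(I)=H_I(I)/I$ near $I=0$ with $H_I(I)=\omega_{\min}+O(I)$ yields $\sigma=\omega_{\min}\ln I+O(1)$, hence
\[
  U_T(\sigma) \;=\; -\dfrac{T}{\omega_{\min}}\sigma \,+\, O(1) \qquad (\sigma\to-\infty),
\]
with exponentially small error (note $H(I(\sigma))=O(e^{\sigma/\omega_{\min}})$). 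The linear growth forces $\sigma_-(E)=-E\omega_{\min}/T+O(1)$ for large $E$, and the substitution $u=E+T\sigma/\omega_{\min}$ in the left half-period integral gives
\[
  \tau(E)\;\geq\; 2\int_{\sigma_-(E)}^{0}\dfrac{\D\sigma}{\sqrt{2(E-U_T(\sigma))}} \;=\; \dfrac{2\omega_{\min}}{T}\sqrt{2E}\,(1+o(1)),
\]
which diverges as $E\to\infty$, again contradicting constancy.

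The main obstacle is securing the uniform asymptotic of $U_T$ on $(-\infty,0)$ and the resulting quantitative lower bound on the half-period integral. Both follow from the non-degenerate Morse minimum of $H$ (providing $\omega_{\min}>0$) and the analyticity of $H$ near that minimum; the half-period estimate is then a routine change of variables with a dominated-convergence argument to control the lower truncation as $E\to\infty$.
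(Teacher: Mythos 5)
Your central estimate is correct and, in the regime where it applies, coincides with the paper's own reasoning: if the edge of the Reeb graph carrying the equilibrium descends to a \emph{non-degenerate} minimum of $H$ with $I\downarrow 0$, then $H_I\to\omega_{\min}>0$, so $U_{\sigma}=I\bigl(1-T/(I\,H_I)\bigr)\to -T/\omega_{\min}$ is bounded below, $U_T$ has a linear left tail, and the half-period grows like $\sqrt{E}$. This is exactly the ``more elementary proof'' indicated in Remark~\ref{rem:isochronous-potentials}; the paper's main text reaches the same contradiction by quoting the Bolotin--MacKay characterisation of isochronous potentials ($\Delta(u)\propto\sqrt{u}$) against the linear lower bound $\Delta(u)\ge cu-d$ that a bounded-below $U_{\sigma}$ forces.

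The gap is in your second sentence: ``by Lemma~\ref{lem:nh-is-a-separable-thermostat}, $I_0$ lies on a root edge along which $I$ ranges over $(0,\infty)$'' is not what that lemma gives, and it is false whenever $H$ has a saddle. The lemma only shows that $I\cdot H_I$ maps \emph{some} non-compact edge onto $[0,\infty)$; the edge $c_0$ containing the equilibrium may be bounded above or below by saddle vertices of $H$, at which $I$ tends to a \emph{positive} limit and $H_I\to 0$. Hence your domain $(-\infty,\sigma_+)$ and the coercivity of $U_T$ are assumptions, not conclusions; the paper must derive them from the isochronicity hypothesis (Claim~\ref{cl:1-no-crit-pts} excludes a saddle above; Claim~\ref{cl:1-consequences} treats a saddle below, where $U_T$ turns out to be \emph{bounded} and the contradiction is a change of topology of the level sets of $\bar{G}_{0,T}$ at height $\sup U_T$ --- a case your dichotomy ``$E^\ast=\infty$, or $E^\ast$ is the value at an interior critical point of $U_T$'' does not cover). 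Second, well-behavedness only requires $H$ to be \emph{topologically} Morse, so the minimum at the bottom of $c_0$ may be degenerate; then $H_I\to 0$, $\sigma$ is bounded below, the linear tail disappears, and your period estimate yields nothing. The paper's argument in fact terminates by showing that isochronicity \emph{forces} this degenerate case (Claim~\ref{cl:2-consequences}), which is then examined separately in Remark~\ref{rem:nh-degenerate-crit-pt-at-0}. A smaller point: coercivity of $U_T$ as $\sigma\to\sigma_+$ needs more than $\limsup I\,H_I=\infty$, which is all that condition (5) of Definition~\ref{def:h-is-well-behaved} supplies.
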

\begin{proof}[Proof of Theorem~\ref{thm:nh-kam-nec}]
  Since $H$ is bounded below, it can be assumed without loss that $0$
  is its (and $V$'s) global minimum. Let $H_0 = H(I_0)$ be the value
  of $H$ at the thermostatic equilibrium action $I_0$.
  
  Assume that the period of $\bar{R}_0$ is constant in a neighbourhood
  of the elliptic critical point $(I_0,0)$. Real analyticity implies
  that the period is constant on the entire phase space. Since $(σ,ξ)$
  are Darboux coordinates and $\bar{G}_{0,T}$ is a mechanical
  hamiltonian, the potential $U=U_T$ must be \textem{isochronous} and
  real analytic. In particular, $U$ must increase to the right of
  $I_0$ and decrease to the left.
  
  From this, it follows that $U$ is bounded below, with a unique
  critical point at $σ = 0$. Moreover,
  \begin{align}
    \label{eq:u_s-u_ss}
    U_{σ} &= I - T I_H = I \left( 1 - T/Κ(I) \right) &&& U_{σ σ} &= I \left( I_H - T I_{HH} \right).
  \end{align}

  \begin{claim}
    \label{cl:1-no-crit-pts}
    $H$ has no critical values in $[H_0, ∞)$.
  \end{claim}

  We use the notation and terminology introduced in
  section~\ref{sec:prelims}. Let $γ_0 ∈ Γ_H$ be the connected
  component of $H^{-1}(H_0)$ where the thermostatic equilibrium is
  attained. Since $Κ(I_0)=T$, the point $γ_0$ is not a vertex, so let
  $c_0 ⊂ Γ_H$ be the interior of the edge containing $γ_0$. The
  claim~\ref{cl:1-no-crit-pts} is equivalent to the claim that $\sup
  c_0 = ± ∞$.

  Assume otherwise, so $v = \sup c_0$ is a saddle vertex. But
  $\lim_{γ ↗ v} \tilde{I}(γ)>0$ and $Κ(\tilde{I}(v)) = 0$ from
  section~\ref{sec:prelims}, which implies that $U_{σ} ↘ -∞$ as
  $γ ↗ v$. Since $U$ is increasing to the right of $σ=0$ (i.e. for
  $γ ∈ c_0$ such that $γ_0 < γ < v$), this is a
  contradiction. Therefore, the $\sup c_0$ is not a saddle vertex so
  it can only be $± ∞$.

  It is a tautology that the following dichotomy holds.

  \begin{definition}[Critical Dichotomy]
    \label{def:h-crit-dichotomy}
    Let $c_0 ∈ Γ_H$ be the interior of the edge described
    above. Either $\min c_0$ is
    \begin{enumerate}
    \item a saddle vertex; or
    \item a local minimum vertex.
    \end{enumerate}
  \end{definition}

  \begin{claim}
    \label{cl:2-consequences}
    In case (2) of the critical dichotomy the minimum is degenerate.
  \end{claim}
  
  Since $Γ_H$ is connected and $\sup c_0 = ± ∞$, one concludes that
  $\min c_0$ must be the unique global minimum vertex of $Γ_H$. Since
  $\tilde{I}(γ) ↘ 0$ and $H ↘ 0$ as $γ ↘ \min c_0$, therefore $U ↗
  ∞$. If $\liminf U_{σ} = -∞$, then \eqref{eq:u_s-u_ss} implies that
  $\limsup I_H = ∞$ ($\liminf H_I = 0$) and so the minimum must be
  degenerate.

  Therefore, to complete the proof of claim \ref{cl:2-consequences},
  it remains to show that $\liminf_{I → 0^+} U_{σ} = -∞$. Assume that
  $U_{σ}$ is bounded below by $-1/c$ for some $c>0$. Let $σ_{±}(u)$ be
  the inverses of $U(σ)$: $U(σ_{±}(u))=u$ for all $u ≥ 0$ and
  $± σ_{±}(u) > 0$ for all $u>0$. Then $σ_{-}'(u)$ is bounded above by
  $-c$, so there is a $d>0$ such that $σ_{-}(u) ≤ - cu + d$ for all
  $u$. Since $σ_{+}(u) ≥ 0$, it follows that
  $Δ(u) = σ_{+}(u) - σ_{-}(u) ≥ cu-d$ for all $u > 0$. But, it is
  known~\cite{MR0120782,17462920030101BM} that if $U$ is an isochronous
  potential, then $Δ(u)$ is a constant multiple of $\sqrt{u}$. This
  contradiction proves that $\liminf U_{σ}=-∞$, as required.

  \begin{claim}
    \label{cl:1-consequences}
    In case (1) of the critical dichotomy the following holds:
    \begin{enumerate}
    \item[] As $γ ↘ \min c_0$,
    \item\label{cl:1} $I ↘ I_1 > 0$;
    \item\label{cl:3} $I_H ↗ ∞$;
    \item\label{cl:2} $U_{σ} ↘ -∞$. And,
    \item\label{cl:4} $U$ is bounded.
    \end{enumerate}
  \end{claim}

  \ref{cl:1} \& \ref{cl:3} follow from section~\ref{sec:prelims} since
  $\min c_0$ is a saddle vertex. \ref{cl:2} follows from
  \eqref{eq:u_s-u_ss} and \ref{cl:3}. From \ref{cl:1} and
  \eqref{eq:nh-g0} it follows that $U$ is bounded from above on the
  interval $(\min c_0, γ_0]$ and \ref{cl:2} implies that $U$ does not
  extend to the left of $\min c_0$; on the other hand, $U$ reaches its
  minimum value at $γ_0$ (where $σ=0$). But the image of
  $[γ_0,\sup c_0)$ equals that of $(\min c_0, γ_0]$. Therefore, $U$ is
  bounded, hence \ref{cl:4}.
  
  Finally, the finite value $g = \sup U_T(σ)$ is a critical point of
  $\bar{G}_{0,T}$ since the topology of the level sets of
  $\bar{G}_{0,T}$ change at height $g$. This contradicts
  the constancy of the period of $\bar{R}_0$.

  Therefore, only case (2) of the critical dichotomy can hold, in
  which case claim~\ref{cl:2-consequences} holds, too.
\end{proof}

\begin{remark}[Isochronous potentials]
  \label{rem:isochronous-potentials}
  %% LL Mechanics \S 12
  The proof of claim~\ref{cl:2-consequences} uses the fact that if $U$
  is an isochronous potential, then $Δ(u)$ is proportional to
  $\sqrt{u}$~\cite{MR0120782,17462920030101BM}. A more elementary
  proof consists in showing that the action function $J$ satisfies
  $2 π J(g) ≥ c g^{\frac{3}{2}} - 2d g^{½}$ while $J$ is proportional
  to $g$.
\end{remark}

\begin{remark}[Degenerate global minimum]
  \label{rem:nh-degenerate-crit-pt-at-0}
  In case (2) of the critical dichotomy, above, it is shown that the
  critical point at $H=0$ must be degenerate. Since $H$ is real
  analytic, the degeneracy is finite, so a straightforward calculation
  shows that $I=c H^{r} + o(H^{r})$ for some constant $c>0$ and
  integer $n ≥ 2$ with $r=½ (1+1/n)$. Therefore, the infimum $σ_1$ of
  $σ$ is finite.

  Let's examine a particular example: $V(q) = (ω q)^{2n}$ for $n ≥
  2$. A computation shows that $I = γ\, H^r$ where $r=½ (1+1/n)$ and
  $γ$ is a structural constant, $H_0 = r T$, $σ = (H^s - H_0^s)/(s γ)$
  where $s=1-r$ (so $σ_1 = - H_0^s/(s γ)$), and
  \[ U/H_0 = \left( 1 + σ/σ_1 \right)^{1/s} - \ln \left( 1 + σ/σ_1
    \right)^{1/s}. \] The third-order Birkhoff normal form of
  $\bar{G}_{0,T}$ is, up to a $T$-dependent constant,
  \begin{align*}
    & ω \, γ \, J - A \, γ² \, J^2 + B \, γ³ J ³ + O(J^4), &&& \text{where} && ω &= H_0^{1/2-s},\, A = \frac{6\,s^2 - 6\,s + 1}{{24\,H_0^{2s}}},\\
    &                                                      &&& \textrm{and} && B &= \frac{180\,s^4 - 312\,s^3 + 168\,s^2 - 36\,s + 5}{1728\,H_0^{3s+1/2}}
  \end{align*}
  Since the resultant of the numerators of $A$ and $B$ is $-6912$, the
  hamiltonian is not isochronous for any value of $s$, hence $n$.

  \medskip

  On the other hand, one can try to ``design'' a hamiltonian $H$ given
  $U$ using \eqref{eq:sigma-of-i} \& \eqref{eq:nh-g0}. In this case,
  one gets
  \begin{equation}
    \label{eq:h-in-terms-of-u}
    H(σ) = H_0 - T \ln \left( 1 - β \, I_0 \, \int_0^{σ} \exp(-β \tilde{U}(σ)) \, \D{σ} \right),
  \end{equation}
  where $\tilde{U} = U-U_0$ is a function defined on $(σ_1,∞)$;
  $U_0=U(0)=H_0 - T \ln I_0$. The function $\tilde{U}$ satisfies
  \begin{align}
    \label{eq:tu-a}
    \int_{σ_1}^{∞} \exp \left( -β \tilde{U}(σ) \right) \D{σ} &= \frac{T}{I_0} × \exp(β H_0), \\
    \label{eq:tu-b}
    \int_0^{∞} \exp \left( -β \tilde{U}(σ) \right) \D{σ}   &= \frac{T}{I_0},
  \end{align}
  so the integral of $\exp(-β U)$ over $(σ_1,∞)$ is $T$.

  One can use equations \eqref{eq:h-in-terms-of-u}--\eqref{eq:tu-b} to
  reconstruct $H$ and $I$ as functions of $σ$. A tractable case is
  when $\tilde{U}$ is rational; since the isochronous case is of
  particular interest one can assume, without significant loss of
  generality, that
  \begin{equation}
    \label{eq:tu-rational}
    \tilde{U}(σ) = \left( (σ+1) - (σ+1)^{-1} \right)^2.
  \end{equation}
  In this case, one computes
  \begin{align}
    \label{eq:tu-a-ex1}
    \int_{-1}^{∞} \exp \left( -β \tilde{U}(σ) \right) \D{σ} &= \sqrt{\frac{π}{4 β}}\\
    \label{eq:tu-b-ex1}
    \int_{0}^{∞} \exp \left( -β \tilde{U}(σ) \right) \D{σ} &= \sqrt{\frac{π}{4 β}} \, W_0, &&& W_0 &= W(0),
  \end{align}
  where $W=W(σ)$ is defined in \eqref{eq:h-i-of-sigma}. Combined with
  \eqref{eq:tu-a} \& \eqref{eq:tu-b}, one finds that
  \begin{align}
    \label{eq:h-of-t-ex1}
    H_0 &= -T \, \ln(W_0), &&& I_0 &= \frac{2\,T^{½}}{\sqrt{π} \, W_0}.
  \end{align}
  From this one determines that (with $σ=τ-1$)
  \begin{align}
    \label{eq:h-i-of-sigma}
    H &= -T \ln W, &&& I = H_{σ} &= \frac{2\,T^{½}}{\sqrt{ π } \, W} × \exp \left( -β (τ-1/τ)² \right) \\\notag
      &\text{where} &&& 2W       &= e^{4 β} \, \erfc \left( \sqrt{β} (τ+1/τ)\right) + \erfc \left( \sqrt{β} (τ-1/τ) \right),
  \end{align}
  and $\erfc$ is the complementary error function~\cite[Chapter
  7]{MR1225604}. One finds that $\lim_{I → 0^+} H/I^{α}$ is $0$ if
  $α=1$ and $∞$ if $α > 1$. On the other hand, for the oscillator with
  potential $V=(ω q)^{2n}$ the same limit is $0$ for $1 ≤ α < 1/r$ and
  $1/r > 1$ when $n > 1$. So, by the arguments of the first paragraph
  of this remark, the hamiltonian $H$ cannot be mechanical and real
  analytic. Indeed, I am uncertain if $H$ extends to a $C^2$
  hamiltonian, mechanical or otherwise, in the $(q,p)$ coordinates.

  To rule out all possibilities for $H$, one needs to know the general
  form of an isochronous potential. Bolotin \&
  MacKay~\cite[p. 220]{17462920030101BM} prove that if $U$ is a $C^r$
  ($r ≥ 2$) isochronous potential on $\R$ with a critical point at
  $0$, and $U''(0)=2$, then there is a continuous (shear) function
  $\hat{σ} : [0,∞) → \R$ such that $\hat{σ}(u) = o(\sqrt{u})$ at $0$,
  $\hat{σ}$ is $C^r$ on $(0,∞)$, $|\hat{σ}'(u)| < ½ u^{-\frac{1}{2}}$
  for $u>0$ and the function $U=U(σ)$ is defined implicitly as the
  zero locus of the function $K(σ,u) = u - (σ - \hat{σ}(u))
  ²$. Geometrically, the graph of the potential $U$ is obtained from
  the parabola $u=σ ²$ by the shearing transformation
  $(σ,u) → (σ - \hat{σ}(u),u)$. The general case is obtained from this
  result by rescaling $u$ and restricting the domain of $K$. However,
  even with this additional information, I am unable to prove if the
  example with $\tilde{U}$ determined by \eqref{eq:tu-rational} is
  illustrative of the general case or a singular case.
\end{remark}

\begin{remark}[The averaged kinetic energy]
\label{rem:nh-remark-on-ave-ke}

As mentioned above, the formula for the averaged kinetic
energy~\eqref{eq:nh-mean-ke} is not used in
\cite{MR2519685}. Nonetheless, the authors numerically compute this
integral, called $k_0$ there, for the planar pendulum
$H = ½ p ² - \cos q$ by integrating Hamilton's equations. On the
other hand, it follows from \eqref{eq:nh-mean-ke} and \cite[eq.s
11--13]{intech} that
\[ Κ = \dfrac{2(H+1)}{1 - k K'(k)/K(k)}, \qquad H+1 = \dfrac{2}{k^2}, \quad I = \dfrac{8 K(k)}{π k}, \]
where $K(k)$ is an elliptic integral described in
\cite{intech}. Figure \ref{fig:llm-t-pendulum}
(resp. \ref{fig:llm-w-pendulum}) reproduces figure 10 (resp. 11) of
\cite{MR2519685} using only these formulas.

\begin{figure}[h]
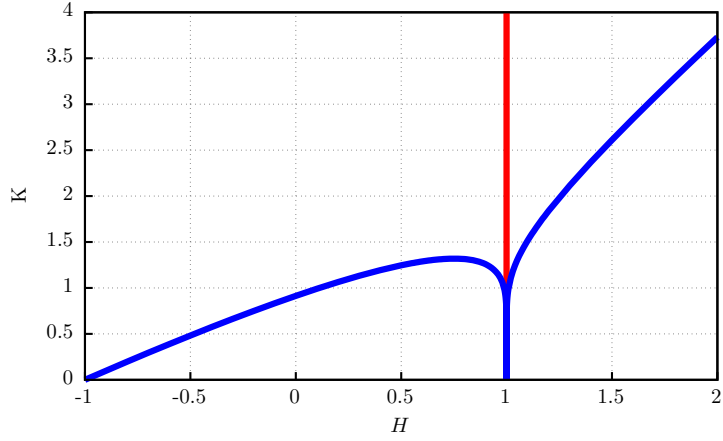

  \caption{$Κ$ vs. $H$ for the planar pendulum $H = ½ p ² - \cos q$, c.f. \cite[Figure 10]{MR2519685}. The vertical red line is the energy level of the saddle fixed point.}
  \label{fig:llm-t-pendulum}
  \ltxfigure{llm-t-pendulum.tex}{10cm}{!}
\end{figure}

\begin{figure}[h]
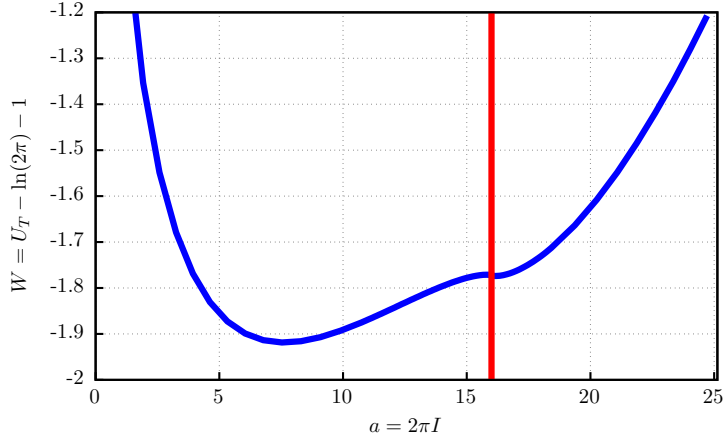

  \caption{$W$ vs. $a$ for the planar pendulum $H = ½ p ² - \cos q$, c.f. \cite[Figure 11]{MR2519685}. The vertical red line is the action of the saddle fixed point.}
  \label{fig:llm-w-pendulum}
  \ltxfigure{llm-w-pendulum.tex}{10cm}{!}
\end{figure}

\begin{figure}[h]
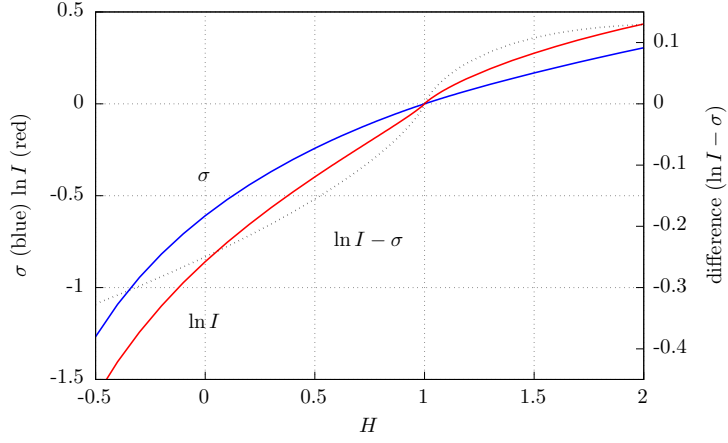

  \caption{$σ$, $\ln I$ and their difference for the planar pendulum.}
  \label{fig:llm-logi-v-sigma-pendulum}
  \ltxfigure{llm-logi-v-sigma-pendulum.tex}{10cm}{!}
\end{figure}
\end{remark}

\subsection{Logistic Thermostat of Tapias, Bravetti \& Sanders}
\label{sec:tap-brav-san}

In~\cite{10.12921/cmst.2016.0000061}, Tapias, Bravetti \& Sanders
introduce a ``logistic thermostat'', which is very similar to Hoover's
with:
\begin{align}
  \notag
  G_{β} &= H + F, &&&&& A_0,A_1 &= 0,\\
  \label{eq:logistic-separable}
  B_0 &= p, & B_1 &= -F', & C_0 &= p · H_p  - T, & C_1 &= 1,
\end{align}
where $F = \ln \left( \cosh \left( ξ \right)\right)$ and
$F' = \tanh \left( ξ \right)$. \cite{10.12921/cmst.2016.0000061} uses
the variable $ζ$ for the thermostat and parameter $Q$ (which would be
$\sqrt Q$ in Hoover's thermostat). These are related to $ξ$ and $ε$ by
$ξ = ε ζ/2$ and $ε = 1/Q$. That paper also uses $2 T F$ in lieu of
$F$, but there is no qualitative difference in the analysis below. The
choices here are dictated by the desire to portray clearly this
thermostat as a perturbation of Hoover's (which it is), since with the
choices made, $F = ½ ξ ² + O(ξ ⁴)$.

Indeed, let $F_{η}(ξ) = η^{-2} F(η ξ) = ½ ξ ² + η^2
\tilde{F}_{η}(ξ)$. For $η=0$, one has Hoover's thermostat and for
$η=1$, the logistic thermostat. Equations
\eqref{eq:nh-g0b}--\eqref{eq:nh-g0} hold for the averaged thermostat
from equations \eqref{eq:logistic-separable} (with $½ ξ ²$ replaced by
$F(ξ)$ in the latter equation). Let $φ_{η} : (ρ, J) → (ξ,σ)$ be the
angle-action map for $\bar{G}_{0,T; η}$ where $T$ is held fixed. The
thermostatic equilibrium is at $(σ,ξ)=(0,0)$ independently of $η$. If
$H$ is real analytic, then $η \mapsto \bar{G}_{0,T; η}''(J)$ is a
real-analytic function that does not vanish at $η=0$ for some
$J$. Therefore, it is non-zero for all but a countable set without
accumulation points. This ``almost'' proves that the averaged hamiltonian
is never isochronous.

\begin{theorem}
  \label{thm:logistic-nec-kam}
  Let $H : \cotangent Σ → \R$ be a well-behaved, real-analytic
  hamiltonian. If $F$ \eqref{eq:logistic-separable} is even,
  real-analytic and satisfies $F(ξ)/|ξ| → c>0$ as $|ξ| → ∞$, then the
  period of $\bar{R}_0$ is not constant in a neighbourhood of the
  critical point $(I_0,0)$.
\end{theorem}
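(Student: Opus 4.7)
The plan is to mirror the proof of Theorem~\ref{thm:nh-kam-nec}, with the boundedness of $F'$---implied by the hypothesis $F(\xi)/|\xi|\to c$---replacing the Bolotin--MacKay input used in the Nos\'e--Hoover case. Assume for contradiction that the period of $\bar R_0$ is constant in a neighbourhood of $(I_0,0)$; by real-analyticity of $H$ and $F$, it is then constant on the entire phase space. Since the thermostat~\eqref{eq:logistic-separable} is separable and $K=I\,H_I$ by~\eqref{eq:nh-mean-ke}, one has $\bar b_0=-F'(\xi)\,I$, so Lemma~\ref{lem:ave-transformation} yields $\bar G_{0,\beta}=(\beta Z(\beta))^{-1}\,I\,\exp(-\beta(H+F(\xi)))$ modulo a function of $I$. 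The substitution $\sigma=\int_{I_0}^I H_I\,dI/I$ then brings the averaged hamiltonian to Darboux form
\[
\bar G_{0,T}=F(\xi)+U_T(\sigma),\qquad U_T(\sigma)=H(I(\sigma))-T\ln I(\sigma),
\]
in exact parallel with~\eqref{eq:nh-g0}.

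The critical dichotomy is then applied to $\min c_0$, where $c_0\subset\Gamma_H$ is the edge containing the thermostatic equilibrium $\gamma_0$. In case~(1), where $\min c_0$ is a saddle vertex, the argument of claim~\ref{cl:1-consequences} carries over verbatim, since it uses only the behaviour of $H,I,\sigma$ near a saddle; it produces a finite $g=\sup U_T$. Because $F$ is unbounded on $\R$, the level sets $\{F(\xi)+U_T(\sigma)=E\}$ are compact for $E<g$ and noncompact for $E>g$, so $g$ is a critical value of $\bar G_{0,T}$ and the period of $\bar R_0$ diverges there, contradicting the constant period hypothesis.

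In case~(2), where $\min c_0$ is a local minimum vertex of $\Gamma_H$, the logistic-type hypothesis on $F$ enters. Since $F$ is even and real-analytic with $F(\xi)/|\xi|\to c>0$, its derivative $F'$ is bounded; set $C=\sup_\xi F'(\xi)<\infty$. The period of $\bar R_0$ at energy $E$ satisfies
\[
T(E)=2\int_{\sigma_-(E)}^{\sigma_+(E)}\frac{d\sigma}{F'(F^{-1}(E-U_T(\sigma)))}\geq\frac{2}{C}\,\Delta(E),\qquad \Delta(E):=\sigma_+(E)-\sigma_-(E),
\]
so constant period $T\equiv T_0$ forces $\Delta(E)\leq CT_0/2$ uniformly in $E$. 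If $U_{T,\sigma}$ were bounded below by $-1/c_1$, then the estimate in claim~\ref{cl:2-consequences} would give $\sigma_-(u)\leq -c_1 u+d$ and hence $\Delta(u)\geq c_1 u-d\to\infty$, contradicting the just-established bound. Therefore $\liminf U_{T,\sigma}=-\infty$, and~\eqref{eq:u_s-u_ss} forces the global minimum of $H$ to be degenerate.

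The main obstacle is the final step, which mirrors Remark~\ref{rem:nh-degenerate-crit-pt-at-0}. Real-analyticity of $H$ makes the degeneracy of the minimum finite, so $I=c\,H^r+o(H^r)$ with $r=\tfrac12(1+1/n)$ for some integer $n\geq 2$, and the Birkhoff normal form of $\bar G_{0,T}$ at $(I_0,0)$ can be computed explicitly. The remaining task is to show this normal form is not isochronous: the additional real-analytic input $F(\xi)=\tfrac12 F''(0)\xi^2+\sum_{k\geq 2}f_{2k}\xi^{2k}$ contributes nonzero terms to each higher Birkhoff invariant in a way that, by a resultant computation analogous to that in Remark~\ref{rem:nh-degenerate-crit-pt-at-0}, cannot simultaneously cancel the $H$-contribution for any single integer $n$.
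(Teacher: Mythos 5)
Your architecture matches the paper's: pass to the quasi-mechanical averaged hamiltonian $\bar G_{0,T}=F(\xi)+U_T(\sigma)$ in the Darboux coordinates \eqref{eq:sigma-of-i}, run the critical dichotomy on the edge $c_0$, dispose of the saddle case by producing a second critical value of $\bar G_{0,T}$, and in the minimum case show $\liminf U_{T,\sigma}=-\infty$ so that \eqref{eq:u_s-u_ss} forces the minimum of $H$ to be degenerate. The step where you depart from the paper, however, rests on a false inference: $F(\xi)/|\xi|\to c$ does \emph{not} imply that $F'$ is bounded, even for even, real-analytic $F$. The hypothesis controls only the long-range average of $F'$, so $F'$ may carry arbitrarily tall, narrow, real-analytic spikes (e.g.\ add to $c\sqrt{1+\xi^2}$ a symmetrized sum of Lorentzian bumps of height $n$ and width $n^{-3}$ centred at $\pm n$) without disturbing $F(\xi)/|\xi|\to c$ or monotonicity on each half-line. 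Your case-(2) argument hinges entirely on the bound $T(E)\ge (2/C)\,\Delta(E)$ with $C=\sup F'$, so it collapses for such $F$. The paper's argument at exactly this point is different and robust: it estimates the \emph{action}, i.e.\ the area enclosed by the level set $\{\bar G_{0,T}=g\}$, by inscribed and circumscribed rectangles \eqref[1]{eq:logistic-i-bd}, which uses only the asymptotics of the values $F(\xi)=c|\xi|(1+o(1))$ and never $F'$; isochronicity plus real-analyticity make $\bar G_{0,T}=\omega J$ linear in $J$, and comparing $2\pi J$ with $2g\,\Delta(g)$ gives $\Delta(g)\to\pi/\omega$, whence $\sigma_{\pm}$ have finite limits, $U$ has bounded domain, and $U_{T,\sigma}\to-\infty$ at the left endpoint. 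You should replace your period-integral bound by this area estimate (or else add boundedness of $F'$ as a hypothesis, which would weaken the theorem).

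Your closing paragraph is also not a proof of what it claims: ``contributes nonzero terms to each higher Birkhoff invariant in a way that cannot simultaneously cancel'' is an assertion, and the paper's Remark~\ref{rem:logistic-degenerate-crit-pt-at-0} carries out the corresponding resultant computation only for the explicit family $V=(\omega q)^{2n}$, not for a general degenerate minimum. In the paper the proof terminates one step earlier, exactly as in Theorem~\ref{thm:nh-kam-nec}: the conclusion that isochronicity forces a degenerate global minimum of $H$ is the endpoint (it is what the Morse clause of well-behavedness is meant to exclude), and no Birkhoff normal form computation is attempted in the general case. So either the final step is unnecessary, or---if one reads ``topologically Morse'' as permitting degenerate minima---it is a genuine open gap that your sketch does not close.
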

\begin{remark}
  \label{rem:logistic-nec-kam}
  It is clear that $F(ξ) = \ln \cosh(ξ)$ satisfies the hypothesis of
  this theorem. The strategy of the proof is similar to that employed
  for Theorem~\ref{thm:nh-kam-nec}, with a similar result: if
  $\bar{R}_0$ has a constant period, then $H$ has a single, degenerate
  minimum point.
\end{remark}

\begin{proof}
  Assume the stated hypotheses. Condition \ref{cond:3} of definition
  \ref{def:separable-thermostat} requires that $F$ is even, so
  $F'(0)=0$ and without loss, $F(0)=0$. If $U_T(I(σ))$ is bounded
  above, $γ=\sup \set{ U_T }$ is a critical value of
  $\bar{G}_{0,T}$. If $U_T$ is not constant, then $\bar{G}_{0,T}$ has
  two distinct critical values, which contradicts
  isochronicity. Therefore, $U_T$ is unbounded from above and has only
  one critical value (an absolute minimum, which may be assumed to be
  $0$) at the point $σ=0$.

  Claim~\ref{cl:1-no-crit-pts} holds here, with the same proof as
  above.

  To prove that claim~\ref{cl:2-consequences} holds here, it suffices
  to prove that $\liminf U_{σ} = -∞$. To do this, one uses a method
  similar to that mentioned in
  remark~\ref{rem:isochronous-potentials}.

  For $g>0$, let $σ_{±}(g)$ be the local inverse to $U_T$:
  $g = U_T(I(σ_{±}(g)))$ and $± σ_{±}(g) > 0$. Let
  $Δ = σ_+ - σ_-$. From the previous paragraph it follows that both
  $σ_+$ and $-σ_-$ are increasing, so $Δ$ is, too.

  Let $g \gg 1$. By hypothesis, $F(ξ) = c\, |ξ|\, (1+k(ξ))$ where
  $k(ξ) → 0$ as $|ξ| → ∞$; without loss of generality, it is assumed
  $c=1$. The level set $\set{ \bar{G}_{0,T} = g }$ satisfies
  $|ξ| = g - U_T(I(σ)) + o(g)$ for $|ξ| \gg 1$. Let $(ρ,J)$ be the
  angle-action variables for $\bar{G}_{0,T}$. It follows, by comparing
  inscribed and circumscribed rectangles, that for any $0 < α < 1$,
  \begin{align}
    \label{eq:logistic-i-bd}
    2 (1-α) g Δ (α g)  &≤ 2 π J + o(g) ≤ 2 g Δ(g).
  \end{align}
  If $\bar{R}_0$ has a constant period in a neighbourhood of the
  critical point $(I_0,0)$, then since $F$ and $H$ are real-analytic,
  its period is constant. Therefore, $\bar{G}_{0,T} = ω J$ is linear
  in $J$, and
  \begin{align}
    \label{eq:logistic-limsup-inf}
    \frac{π}{ω} &≤ \lim_{g → ∞} Δ(g) ≤ \frac{π}{ω(1-α)},
  \end{align}
  %% Take the inf of the rhs to conclude
  hence $Δ(g) ↗ π/ω$ as $g → ∞$. This implies that $σ_-(g)$ has an
  infimum $σ_1$ and $σ_+(g)$ has a supremum $σ_2 = σ_1+π/ω$.

  To summarize: $U=U_T(I(σ))$ has a bounded domain $(σ_1,σ_2)$ and
  diverges to $∞$ as $σ$ approaches either endpoint. Therefore $U_{σ}$
  diverges to $-∞$ at $σ_1$ (resp. $∞$ at $σ_2$). This proves
  claim~\ref{cl:2-consequences} holds here, too.

  To prove that claim~\ref{cl:1-consequences} holds here, one notes
  that the proof above does not make use of the mechanical nature of
  $\bar{G}_{0,T}$, only its quasi-mechanical nature. Since~\ref{cl:4}
  of that claim is that $U$ is bounded, one obtains a contradiction.

  Therefore, only case (2) of the critical dichotomy can hold, in
  which case claim~\ref{cl:2-consequences} holds, too.
\end{proof}

\begin{remark}[Degenerate global minimum]
  \label{rem:logistic-degenerate-crit-pt-at-0}
  Similar to remark~\ref{rem:nh-degenerate-crit-pt-at-0}, one can
  examine the case where the potential energy has a degenerate
  critical point at $q=0$, e.g. $V=(ω q)^{2n}$. Similar calculations,
  using the $4$-th order Birkhoff normal form of
  $\bar{G}_{0,T;η}$, show that the isochronicity condition is never
  satisfied.
\end{remark}

\subsection{Watanabe \& Kobayashi}
\label{sec:wat-kob}

In~\cite{PhysRevE.75.040102}, Watanabe \& Kobayashi generalize
Hoover's thermostat by setting
\begin{align}
  \notag
  G_{β} &= H + ½ ξ ², &&&&& A_0,A_1 &= 0,\\
  \label{eq:wat-kob-separable}
  B_0 &= p^k, & B_1 &= -ξ^l, & C_0 &= p^{k-1} \left( p · H_p  - k \, T \right), & C_1 &= ζ_l(ξ)
\end{align}
where, when $l=2n+1$, $ζ_l$ is the $n$-th Maclaurin polynomial of
$(2/β)^n\, n!\, \exp(x)$ evaluated at $x = β ξ ²/2$~\cite[eq.s
8--14]{PhysRevE.75.040102}. For $(k,l)=(1,1)$, one has Hoover's
thermostat.

In order to have conditions (1--4) hold in
definition~\ref{def:separable-thermostat} and (1--2) in
Theorem~\ref{thm:separable-thermostat}, one needs both $k$ and $l$ to
be odd: $k=2m+1$, $l=2n+1$. This is assumed in
\cite{PhysRevE.75.040102}. It is also assumed there that
$H = (q ² + p ²)/2$, but this is not necessary.

The only challenge is condition \ref{cond:2}: to locate an interval of
regular values $[c_-,c_+]$ such that $\mean{C}_0$ alternates sign. To
do this, let us define
\begin{align}
  \label{eq:wk-c0}
  f(I) &= k \, \mean{p^{k-1}}, & \tilde{Κ}(I) &= \frac{\mean{ p^k · H_p }}{f(I)}, &
  \mean{C}_0 &= f(I)\, \left( \tilde{Κ}(I) - T\right)
\end{align}
where $f (=f_k)$ and $\tilde{Κ} (=\tilde{Κ}_k)$ are smooth, positive
functions of $I$. $\tilde{Κ}$ can be viewed as a weighted average
temperature along an orbit similar to that defined in
\eqref{eq:nh-mean-ke}--indeed, when $k=1$, one recovers the definition
of \eqref{eq:nh-mean-ke}.

\begin{lemma}
  \label{lem:wk-mech-implies-k-surjective}
  If $H : \cotangent Σ → \R$ is a well-behaved, smooth hamiltonian,
  then $\tilde{Κ} : B_H → \R$ surjects onto $[0,∞)$ less a finite
  number of points.
\end{lemma}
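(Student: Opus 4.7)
The plan is to adapt the proof of Lemma~\ref{lem:nh-is-a-separable-thermostat} while tracking the behaviour of $\tilde{Κ}$ along every edge and at every vertex of $Γ_H$, since for $k>1$ the limits of $\tilde{Κ}$ at saddle vertices need not vanish. First I will establish smoothness and non-negativity: the angle-action chart on $L=ψ^{-1}(B_H)$ makes $\mean{p^{k-1}}(I)$ and $\mean{p^k · H_p}(I)$ smooth functions of $I$; since $k-1$ is even and $p$ is non-constant along any regular orbit, $f(I)=k\mean{p^{k-1}}>0$ on $B_H$; and Green's theorem applied to $\oint p^k\,\D{q}=\iint_D k p^{k-1}\,\D{q}\,\D{p}$ (or a direct bound on the non-contractible orbits when $Σ=\T^1$) yields $\mean{p^k · H_p}\ge 0$, hence $\tilde{Κ}\ge 0$ on $B_H$.

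Next I will analyse $\tilde{Κ}$ at the vertices of $Γ_H$. At a local-minimum vertex, the Morse normal form shows that regular orbits of small action are perturbed ellipses with $p,H_p=O(\sqrt{I})$, so $\mean{p^{k-1}}=O(I^{(k-1)/2})$ and $\mean{p^k · H_p}=O(I^{(k+1)/2})$, giving $\tilde{Κ}(I)\to 0$. At a saddle vertex $σ$ the period along an incident edge diverges, but $p$ and $H_p$ remain bounded near the saddle critical point, so the time-averages $\mean{p^{k-1}}$ and $\mean{p^k · H_p}$ admit finite one-sided limits along each incident edge. Consequently $\tilde{Κ}$ has a finite one-sided limit along every edge approaching any saddle vertex.

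To obtain unboundedness I compute, in action-angle coordinates on $L$,
\[
\frac{\gibbsexp{β}{p^k · H_p}}{\gibbsexp{β}{p^{k-1}}} = k · \frac{\int_{B_H} \tilde{Κ}(I)\,\mean{p^{k-1}}(I)\,e^{-β H(I)}\,\D{I}}{\int_{B_H} \mean{p^{k-1}}(I)\,e^{-β H(I)}\,\D{I}},
\]
which is $k$ times a Gibbs-weighted average of $\tilde{Κ}$. If $\tilde{Κ}\le M$ on $B_H$, this ratio would be at most $kM$, contradicting condition \itref{it:h-is-well-behaved-unbounded-temp} of the well-behaved definition. Hence $\sup_{B_H}\tilde{Κ}=+∞$, and by continuity and finiteness of $Γ_H$ at least one non-compact root edge carries $\tilde{Κ}$ to $+∞$.

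Finally I will piece together the image: $\tilde{Κ}(B_H)$ is a finite union of intervals, one per edge of $Γ_H$, whose endpoints are drawn from $\{0\}\cup\{\text{finite saddle-vertex limits of }\tilde{Κ}\}\cup\{+∞\}$. Edges ending at local minima contribute images filling $(0,L)$ up to the edge's supremum, at least one non-compact edge contributes an unbounded interval, and the remaining compact edges fill overlapping intervals. The main obstacle is to rule out uncovered gaps of positive length: I will enumerate the finitely many saddle-vertex limits of $\tilde{Κ}$ and use the tree structure of $Γ_H$ together with the intermediate value theorem on each edge to show that every value in $[0,∞)$ that is not one of those saddle limits lies in the image of some edge. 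The residual uncovered set then consists of at most the finite collection of saddle-vertex limit values, which gives the claim.
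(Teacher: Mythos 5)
Your opening steps (positivity of $f$, $\tilde{Κ}\ge 0$ via Green's theorem, vanishing of $\tilde{Κ}$ at local-minimum vertices, and the Gibbs-average argument for unboundedness) match the paper's proof. But there are two genuine gaps in the saddle-vertex analysis, and the second one is exactly the non-trivial content of the lemma. First, your inference ``$p$ and $H_p$ remain bounded near the saddle, so the time-averages have finite limits, consequently $\tilde{Κ}$ has a finite one-sided limit'' is a non sequitur precisely when every critical point on the saddle cycle lies on $\set{p=0}$: there the limiting invariant measure is supported on points with $p_i=0$, so \emph{both} $\mean{p^{k-1}}$ and $\mean{p^k\cdot H_p}$ tend to $0$ and the quotient is indeterminate. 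The paper resolves this by introducing the area function $F(I)=\frac{1}{2π}\iint k\,p^{k-1}\,\D p\,\D q$, proving $\mean{p^k\cdot H_p}=H_I\,F$ and $\tilde{Κ}=F/F_H$, and then bounding $F_H$ from below with a censored integrand to show the limit is a \emph{positive} number $κ_v$ in this case (and $0$ when some saddle point has $p_c\neq 0$).

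Second, and more seriously, your final paragraph only restates the goal: ``use the tree structure together with the intermediate value theorem to rule out gaps'' is not an argument. The IVT on each edge only gives you the interval between that edge's two endpoint limits, and a priori nothing prevents a gap of positive length: if the incoming edges at a saddle vertex $v$ have $\tilde{Κ}$-limits $2$ and $3$ while the outgoing edge starts at $7$, the interval $(3,7)$ could be missed entirely. The missing key fact is the mediant identity: writing $a_i,b_i$ for the limits of $F$ and $F_H$ along the incoming edges $e_i$, the limit of $\tilde{Κ}$ from above along the outgoing edge is $(a_1+\cdots+a_n)/(b_1+\cdots+b_n)$, which lies \emph{between} the minimum and maximum of the incoming limits $a_i/b_i$. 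This is what guarantees that, walking up a chain of edges from a local minimum to the unbounded root edge, the limit from below at each vertex is at least the limit from above, so the union of the edge images covers $[0,∞)$ except possibly the finitely many vertex limit values. Without this additivity of $F$ and $F_H$ across a saddle, your covering argument does not close.
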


\begin{proof}
  If $k=1$, then this is clear, so assume $k ≥ 3$. First, let's
  observe that $\tilde{Κ}_k$ is not bounded above. If $\tilde{Κ}_k$
  were bounded above by some $T_1 > 0$, then integrating over $Γ_H$
  against the Gibbs-Boltzmann measure $\exp(-β H)\, \D{I}/Z(β)$ implies
  that
  $$\gibbsexp{β}{p^k · H_p} ≤ k T_1 \gibbsexp{β}{p^{k-1}}$$ where
  $\gibbsexp{β}{φ} = \iint_{\cotangent Σ} φ\, \exp(-β H)\, \D{p}
  \D{q}/Z(β)$ is the mean-value of $φ$ with respect to the
  Gibbs-Boltzmann probability measure at temperature $T=1/β$. However,
  this bound contradicts
  condition~\itref{it:h-is-well-behaved-unbounded-temp} of the
  definition~\ref{def:h-is-well-behaved}.

  To proceed, let us define
  \begin{align}
    \label{eq:wk-fk}
    F(I) &= \dfrac{1}{2 π} \iint k\, p^{k-1}\, \D p\, \D q = \int f(I)\, \D I, 
  \end{align}
  where the integration is done over the region in the plane bounded
  by the component $γ_0$ of a level set of $H$, i.e.
  $\set{ γ ∈ Γ_H \mid γ ≤ γ_0 }$. A change of variables shows that
  \begin{align}
    \label{eq:wk-pkhp}
    \mean{p^k · H_p} &= H_I × F(I), \\\label{eq:wk-tildek}
    \tilde{Κ} &= H_I\, F\, /F_I = F/F_H.
  \end{align}

  \begin{claim}
    \label{cl:wk-mech-implies-k-surjective}
    The following alternatives hold at a critical vertex $v$ in the
    edge $e$.
    \begin{enumerate}
    \item\label{it:cl:wk-mech-implies-k-surjective-1} If $v$ is a
      local minimum vertex, then as $γ ↘ v$, $F/F_H → 0$ and
      $\tilde{Κ} → 0$;
    \item\label{it:cl:wk-mech-implies-k-surjective-2} if $v$ is a
      saddle vertex and all critical points in $v$ lie in $\set{p=0}$,
      then as $γ → v$, $F$ and $F_H$ are bounded away from $0$ and
      $\tilde{Κ} → κ_v > 0$;
    \item\label{it:cl:wk-mech-implies-k-surjective-3} if $v$ is a
      saddle vertex that contains a critical point $x_c=(q_c,p_c)$
      with $p_c ≠ 0$, then as $γ → v$, both $F$ and $F_I$ are bounded
      away from $0$, and $\tilde{Κ} → 0$.
    \end{enumerate}
  \end{claim}
  \begin{proof}[Proof of claim]
    In case~\itref{it:cl:wk-mech-implies-k-surjective-1}, let the
    local minimum value of $H$ be $H_0$ and without loss assume
    $H_0=0$. From the non-degeneracy of the local minimum, it follows
    that there is a positive constant $b$ such that
    $F = b H^{(k+1)/2} + O(H^{k/2+1})$. Therefore,
    $F/F_H = \frac{2}{k+1} H + O(H^{3/2}) → 0$ as the critical point
    is approached.

    In case~\itref{it:cl:wk-mech-implies-k-surjective-3}, one has that
    the invariant probability measure $\D{θ}/2 π$ on the cycle $γ$
    converges in the weak-$\star$ topology to an invariant probability
    measure that is supported on critical points $(q_i,p_i)$ that are
    limits of points on $γ$ as $γ → v$. This implies that $f=F_I$
    converges to $\sum_i λ_i p_i^{k-1}$ where $λ_i > 0$ and
    $\sum_i λ_i=1$. Therefore, both $F$ and $F_I$ are bounded away
    from $0$ and $∞$ in a neighbourhood of $v$ on the edge $e$. Since
    $v$ is a saddle, $H_I → 0$.

    In case~\itref{it:cl:wk-mech-implies-k-surjective-2}, the previous
    argument yields $F_I → 0$. On the other hand, it is
    straightforward to establish the existence of a lower bound $c>0$
    such that $Δ F ≥ c^{-1} Δ H$ in a neighbourhood of the critical
    cycles: replace the integrand $p^{k-1}$ by the censored integrand
    $Χ_{η}(q,p) × p^{k-1}$ where $Χ_{η}(q,p)$ is $0$ if $|p|<η$ and
    $1$ otherwise. From this it follows that
    $\liminf_{γ \to v} \tilde{Κ}$ is positive and since $\tilde{Κ}|_e$
    is continuous, the claim is established.
  \end{proof}
  
  \begin{figure}[h]
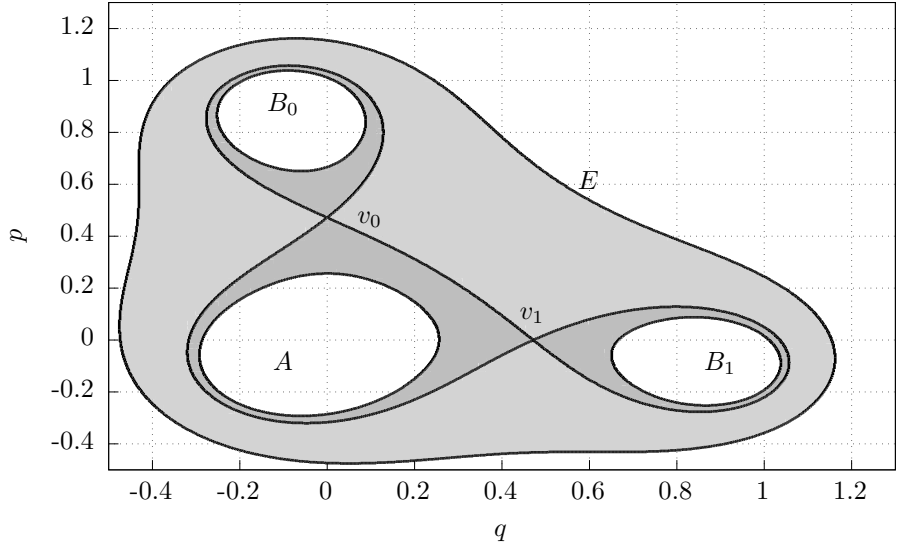

    \caption{A neighbourhood of a critical component.}
    \label{fig:wk-a-neighbourhood-of-a-critical-cycle}
    \ltxfigure{nhwc-wk1.ltx}{\textwidth}{!}
  \end{figure}
  
  To proceed with the proof of the lemma, let $e ⊂ Γ_H$ be an edge on
  the graph of $H$ and $v$ a vertex of $e$. In cases
  (\ref{it:cl:wk-mech-implies-k-surjective-1} \&
  \ref{it:cl:wk-mech-implies-k-surjective-3}) of
  claim~\ref{cl:wk-mech-implies-k-surjective}, $\tilde{Κ}$ vanishes at
  $v$. Otherwise, in case
  (\ref{it:cl:wk-mech-implies-k-surjective-2}), if $v=\min e$ is a
  saddle vertex of $H$, then there are incoming edges
  $e_1, \ldots, e_n$ with $\max e_i = \min e$ such each of the
  incoming edges satisfy (\ref{it:cl:wk-mech-implies-k-surjective-2}),
  too. Let $a_i$ (resp. $b_i$) be the limit of $F$ (resp. $F_H$) along
  $e_i$ as the contour approaches $\max e_i$. Then, the limit from
  above along $e$ of $\tilde{Κ}$ is
  $(a_1+\cdots+a_n)/(b_1+\cdots+b_n)$. This value lies between the
  minimum and maximum limits along the incoming edges. If the minimum
  and maximum are the same (so $\tilde{Κ}$ is actually single-valued
  and continuous at this vertex), then the common value must be
  removed from $[0,∞)$; otherwise, it lies in the image of $B_H$.

  To conclude, there is a finite set of edges in $Γ_H$ that connect a
  local minimum vertex to the maximal edge whose supremum is $∞$ and
  where at each vertex the limit from below of $\tilde{Κ}$ is at least
  as large as the limit from above. This implies that the image of the
  interiors of these edges contains $[0,∞)$ except possibly the finite
  set of limiting values of $\tilde{Κ}$ at vertices of these edges.
\end{proof}

\begin{definition}
  \label{def:wk-admissible-temperatures}
  Let $τ_H$ be the set $[0,∞)$ less the limiting values of
  $\tilde{Κ}_k$ at the vertex set of $Γ_H$. We call $τ_H$ the set of
  admissible temperatures.
\end{definition}

\begin{lemma}
  \label{lem:wk-condition-2-is-almost-always-satisfied}
  Let $T ∈ τ_H$. Then, there is an edge $e ⊂ Γ_H$ and a closed
  interval $l ⊂ e$ such that
  \begin{align}
    \label{eq:wk-good-interval}
    \tilde{Κ}_k(\min l) < T < \tilde{Κ}_k(\max l)
  \end{align}
  and hence $T ∈ \tilde{Κ}_k(l)$.
\end{lemma}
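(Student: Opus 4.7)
The plan is to locate an edge $e \subset \Gamma_H$ whose endpoint limits
\[
L^- := \lim_{\gamma \searrow \min e} \tilde{K}_k(\gamma), \qquad L^+ := \lim_{\gamma \nearrow \max e} \tilde{K}_k(\gamma)
\]
straddle $T$, that is $L^- < T < L^+$. Once such an $e$ is in hand, continuity of $\tilde{K}_k$ on $e \cap B_H$ produces $\gamma_1 \in e$ near $\min e$ with $\tilde{K}_k(\gamma_1) < T$ and $\gamma_2 \in e$ near $\max e$ with $\tilde{K}_k(\gamma_2) > T$. Since the $H$-ordering on $e$ places $\min e$ below $\max e$, the closed subinterval $l = [\gamma_1, \gamma_2] \subset e$ satisfies $\tilde{K}_k(\min l) < T < \tilde{K}_k(\max l)$, and by continuity $T \in \tilde{K}_k(l)$.

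To locate such an edge I descend $\Gamma_H$. Start with any edge $e_0$ whose top vertex $\max e_0$ is $\pm\infty$, so that $L^+_{e_0} = +\infty > T$. If $L^-_{e_0} < T$ then $e_0$ works. Otherwise $L^-_{e_0} > T$ strictly (equality is excluded by $T \in \tau_H$), and the bottom vertex $v = \min e_0$ must be a case-(2) saddle in the sense of claim~\ref{cl:wk-mech-implies-k-surjective}, because cases (1) and (3) would both give $L^-_{e_0} = 0 < T$. The weighted-average formula derived in the proof of Lemma~\ref{lem:wk-mech-implies-k-surjective} then expresses $L^-_{e_0}$ as a convex combination of the limits $L^+_{e'}$ over the child edges $e'$ of $v$. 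Since this convex combination exceeds $T$ and every summand $L^+_{e'}$ is a vertex-limit value at $v$, hence distinct from $T$, at least one child satisfies $L^+_{e'} > T$ strictly. Replace $e_0$ by such a child and iterate.

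The descent terminates because $\Gamma_H$ has only finitely many edges (by well-behavedness of $H$, conditions~\itref{it:h-is-well-behaved-Morse} and~\itref{it:h-is-well-behaved-finite}), and each iteration moves strictly further from the root. The terminal edge $e$ either satisfies $L^-_e < T$ directly, or has its bottom vertex at a local minimum of $H$ or a case-(3) saddle; in both of these last alternatives $L^-_e = 0 < T$ automatically, while the descent has preserved $L^+_e > T$. In every scenario the terminal edge $e$ satisfies $L^-_e < T < L^+_e$, and the first paragraph supplies the required interval $l$. The delicate point---and what I expect to be the main obstacle to verify cleanly---is justifying at each intermediate step that the hypotheses of the weighted-average formula apply; this is exactly the observation that a case-(1) or case-(3) vertex at the bottom of the current edge would already have terminated the descent with $L^- = 0 < T$.
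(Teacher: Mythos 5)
Your proposal is correct, and it reaches the conclusion by a genuinely different route than the paper. The paper reuses the chain $\Gamma'_H\subset\Gamma_H$ already extracted in the proof of Lemma~\ref{lem:wk-mech-implies-k-surjective} (a path from a local minimum vertex up to the unbounded edge along which the one-sided limits of $\tilde K_k$ can only jump upward at vertices), introduces the running maximum $\tilde K^+(x)=\max\{\tilde K(y):y\le x\}$, which is continuous and non-decreasing, and locates the straddling interval by analysing $S=\{\tilde K^+=T\}$; the hypothesis $T\in\tau_H$ is used to rule out $\max S$ being a vertex and to exclude the degenerate case $\tilde K^+(\min e)=\tilde K(\min e)=T$. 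You instead run a greedy, $T$-adapted descent of the tree, maintaining the invariant that the current edge carries values $>T$ near its top, and at each case-(2) saddle you invoke the identity $L^-_e=(a_1+\cdots+a_n)/(b_1+\cdots+b_n)$ --- a convex combination of the incoming limits $a_i/b_i$ with positive weights, since $F_H$ is bounded away from zero there --- to pass to a child with $L^+_{e'}\ge L^-_e>T$; here $T\in\tau_H$ enters only to exclude $L^-=T$. Both arguments rest on the same structural facts from Claim~\ref{cl:wk-mech-implies-k-surjective} and the weighted-average computation, but yours is more self-contained in that it bypasses the envelope $\tilde K^+$ and the case analysis around $\max S$. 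Two small points to tighten: for $\Sigma=\T^1$ there are two unbounded edges and the unboundedness of $\tilde K_k$ established in Lemma~\ref{lem:wk-mech-implies-k-surjective} is only guaranteed on one of them, so ``any edge whose top vertex is $\pm\infty$'' should read ``some such edge''; and on that edge one only knows $\sup\tilde K_k=\infty$, not that a limit $L^+=+\infty$ exists --- which is harmless, since your first paragraph only needs a single point $\gamma_2$ with $\tilde K_k(\gamma_2)>T$ lying above the eventual $\gamma_1$.
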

\begin{proof}
  The previous lemma proved that there is a sub-graph $Γ'_H ⊂ Γ_H$ that
  is homeomorphic to $[0,∞)$ such that $\tilde{Κ}(Γ'_H) = [0,∞)$ and
  $\tilde{Κ}$ is continuous on $Γ'_H$ except possibly at the vertices;
  and it extends to an upper semi-continuous function at the
  vertices. Let
  $\tilde{Κ}^+(x) = \max\set{ \tilde{Κ}(y) \mid y ≤ x, y ∈ Γ'_H}$. The
  function $\tilde{Κ}^+$ is a non-decreasing, continuous map from
  $Γ'_H$ onto $[0,∞)$ and $\tilde{Κ}^+ ≥ \tilde{Κ}$ point-wise.

  Let $T ∈ τ_H$. Then, $S = \set{ x \mid \tilde{Κ}^+(x) = T}$ is a
  closed, connected subset of $Γ'_H$. Let $y = \max S$. Since $T$ is
  not in the image of a vertex in $Γ_H$, there is an edge $e$ such
  that $y ∈ e^o$. Since $e^o$ is open, there is a $z ∈ e^o, z>y$ such
  that $\tilde{Κ}^+(z)>T$. Without loss, we can assume that
  $\tilde{Κ}(z)>T$, too.

  If there exists an $x ∈ e^o$, $x<y$ such that $\tilde{Κ}^+(x)<T$,
  then by similar reasons, the closed interval $l = \set{ t ∈ e^o \mid x
    ≤ t ≤ z }$ satisfies~\ineqref{eq:wk-good-interval}.

  To complete the proof, assume there does not exist an $x ∈ e^o$,
  $x<y$ such that $\tilde{Κ}^+(x)<T$. Then $\tilde{Κ}^+$ is constant
  on the set $e^o ∩ \set{ x ≤ y }$. Since $\tilde{Κ}^+$ is continuous,
  this implies that $T = \tilde{Κ}^+(\min e)$, too. If
  $\tilde{Κ}^+(\min e) = \tilde{Κ}(\min e)$, then $T$ is in the image
  of the vertex set of $Γ_H$. Absurd. Therefore,
  $T=\tilde{Κ}^+(\min e) > \tilde{Κ}(\min e)$ and by the continuity of
  $\tilde{Κ}|e$, there is an $x ∈ e^o$ such that the closed interval
  $l = \set{ t ∈ e^o \mid x ≤ t ≤ z }$
  satisfies~\ineqref{eq:wk-good-interval}.
\end{proof}

\begin{remark}
  \label{rem:wk-mechanical-and-others}
  Let us elaborate on claim~\ref{cl:wk-mech-implies-k-surjective}. If
  $H$ is mechanical, $p = H_p$, so
  $\tilde{Κ}_k = (k+2)^{-1} × f_{k+2} / f_k$. In addition, the change
  of variables in \eqref{eq:nh-mean-ke} implies that for $k ≥ 3$,
  $2 π f_k(I) = k(k-2) H_I(I) × \iint p^{k-3} \D{p} ∧ \D{q}$. These
  two facts imply that $\tilde{Κ}_1$ tends to $0$ as $I$ approaches a
  critical action, while $\tilde{Κ}_k$ tends to a non-zero limit for
  $k ≥ 3$ when the critical action is positive. Figure~\ref{fig:wk-k}
  plots the graphs of $\tilde{Κ}_k$ for selected values of $k$ and
  demonstrates these facts for a selected example. In addition, when
  $H = ½ \left( p ² + q ² \right)$ is a simple harmonic oscillator,
  $\tilde{Κ}_k = \frac{2}{k+1} I$. This fact is seen in
  figure~\ref{fig:wk-k}, too.

  Figure~\ref{fig:wk-a-neighbourhood-of-a-critical-cycle} depicts the
  level sets of a planar hamiltonian with a saddle cycle $v$ with
  incoming edges $A$, $B_0$ and $B_1$ and an outgoing edge $E$ (and
  there are unique minima inside each white disk-shape
  region). Case~\ref{it:cl:wk-mech-implies-k-surjective-3} applies to
  the maximum of both $A$ and $B_0$, so the graph of $\tilde{Κ}$ on
  each edge should be roughly
  $∩$-shaped. Case~\ref{it:cl:wk-mech-implies-k-surjective-2} applies
  to the maximum of $B_1$ and one expects the graph of $\tilde{Κ}$ to
  be roughly
  $/$-shaped. Case~\ref{it:cl:wk-mech-implies-k-surjective-3} also
  applies to the minimum vertex of $E$, so the graph of $\tilde{Κ}|_E$
  should increase monotonically from $0$.
\end{remark}

It remains to prove that the vector field $\bar{R}_0$ does not have a
constant period. The hamiltonian $\mean{G}_{0,β}$ of the averaged
vector field $\mean{R}_0$ and the latter's hamiltonian
$\mean{G}_{0,T}$ with respect to the symplectic form
$-T ω_{β}/\mean{G}_{0,β}$ from Lemma~\ref{lem:ave-transformation} are
computed to be
\begin{align}
  \label{eq:wk-g0b}
  \mean{G}_{0,β} &= (β Z(β) H_I(I))^{-1}\, f(I) \, \tilde{Κ}(I) \, ζ_l(ξ) \,  × \exp(-β G_{β}(I,ξ)) \\
  \label{eq:wk-r0}
  \mean{R}_0 &= \dfrac{f(I) \, \tilde{Κ}(I)}{H_I(I)^2} × \left( -ξ^l \, ∂_I + ζ_l(ξ) \, H_I(I) \, \left[ \dfrac{\tilde{Κ}(I) - T}{\tilde{Κ}(I)}  \right] \, ∂_{ξ} \right) \\
  \label{eq:wk-g0}
  \mean{G}_{0,T} &= Ζ_l(ξ) + H(I) - T \ln F_k(I),
\end{align}
%% Ζ_l(ξ) = \int_0^{ξ} \D{ξ} \, ξ^l/ζ_l(ξ)
where $Ζ_l(ξ) = ½ ξ ² - T \ln (ζ_l(ξ)/ζ_l(0))$ and
$F=F_k$ is defined in~\eqref{eq:wk-fk}. One checks that $Ζ_1(ξ) = ½ ξ ²$
and $F_1(I) = I$, so \eqref{eq:wk-g0} reproduces the hamiltonian in
\eqref{eq:nh-g0}.

Let it be noted that $(I,ξ)$ are not Darboux coordinates for the
rescaled symplectic form. One defines Darboux coordinates via
\begin{align}
  \label{eq:wk-darboux-coordinates}
  σ &= \int_{I_0}^I \frac{(H_I)^2}{f(I)\, \tilde{Κ}(I)} \, \D{I} = \int_{H_0}^H \D{H}/F, &&& χ &= \int_0^{ξ} \frac{\D{ξ}}{ζ_l(ξ)}
\end{align}
which for $k=1, l=1$ specializes to the Darboux coordinates introduced
above for the Hoover thermostat.
\begin{lemma}
  \label{lem:wk-kam-nec}
  Let $H : \cotangent Σ → \R$ be a well-behaved, smooth
  hamiltonian. Then for all $T ∈ τ_H$, the Watanabe-Kobayashi
  thermostat~\eqref{eq:wat-kob-separable} is a separable thermostat in
  the sense of definition~\ref{def:separable-thermostat}. In addition,
  if $l>1$, then
  conditions~\ref{thm:separable-thermostat-cond1}--\ref{thm:separable-thermostat-cond3}
  of Theorem~\ref{thm:separable-thermostat} are satisfied.
\end{lemma}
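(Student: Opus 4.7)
The plan is to verify (a) the structural axioms of Definition~\ref{def:separable-thermostat} for the Watanabe--Kobayashi thermostat, and then (b) the three conditions \ref{thm:separable-thermostat-cond1}--\ref{thm:separable-thermostat-cond3} of Theorem~\ref{thm:separable-thermostat} under the added hypothesis $l>1$. I expect the main obstacle to be condition~\ref{thm:separable-thermostat-cond3} (non-constancy of the period of $\bar R_0$), and the hypothesis $l>1$ will enter precisely there.

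For (a), the separable product structure of $Τ$ is read off directly from~\eqref{eq:wat-kob-separable}. Invariance of $\D{\mu}_{\beta}$ under $Τ$ reduces, after expanding $\beta\kp{\D G}{Τ}-\divergence{Τ}$ and using $A\equiv 0$ together with the fact that $H$ is independent of $\xi$, to the identity $\zeta_l'(\xi)=\beta\,(\xi\,\zeta_l(\xi)-\xi^l)$; this follows by a short termwise computation from the explicit Maclaurin polynomial definition of $\zeta_l$. Condition~\ref{cond:1} holds since $\mean{A_0 H_q}=0$ while $\mean{B_0 H_p}=\mean{p^k H_p}=H_I\,F(I)>0$ on regular orbits by~\eqref{eq:wk-pkhp}, the positivity of $F=F_k$ coming from integrating the non-negative function $k\,p^{k-1}$ over the interior of a level set (recall $k$ is odd). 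Condition~\ref{cond:2} is exactly what Lemma~\ref{lem:wk-condition-2-is-almost-always-satisfied} furnishes for $T\in\tau_H$: on the closed interval $l\subset e$ supplied by~\ineqref{eq:wk-good-interval}, $\mean{C}_0=f(I)(\tilde{Κ}(I)-T)$ changes sign. Conditions~\ref{cond:3} and~\ref{cond:4} are immediate: $A_1\equiv 0$ is trivially odd, $B_1=-\xi^l$ is odd since $l$ is odd, and $\zeta_l$ is a sum of non-negative monomials with strictly positive constant term.

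For (b), conditions~\ref{thm:separable-thermostat-cond1} and~\ref{thm:separable-thermostat-cond2} are formal. Since $A_1\equiv 0$ and $B_1=-\xi^l$ with $l$ odd, $\sign{A_1}+\sign{B_1}=-\sign{\xi}$ for $\xi\neq 0$, giving~\ref{thm:separable-thermostat-cond1}. The strict crossing provided by~\ineqref{eq:wk-good-interval} then yields $\degree{\mean{C}_0|e}=1$, giving~\ref{thm:separable-thermostat-cond2}.

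For condition~\ref{thm:separable-thermostat-cond3}, I pass to the Darboux coordinates~\eqref{eq:wk-darboux-coordinates}, in which the hamiltonian~\eqref{eq:wk-g0} becomes the quasi-mechanical form $\bar G_{0,T}(\chi,\sigma)=Ζ_l(\xi(\chi))+U_T(\sigma)$. The critical observation is that, by construction, $\zeta_l(\xi)/\zeta_l(0)$ is the degree-$n$ Maclaurin polynomial of $\exp(\beta\xi^2/2)$, so
\[
  \zeta_l(\xi)/\zeta_l(0) = \exp(\beta\xi^2/2) + O(\xi^{2n+2}).
\]
Taking logarithm and multiplying by $T$ yields $T\ln(\zeta_l(\xi)/\zeta_l(0))=\xi^2/2+O(\xi^{l+1})$, and the quadratic term in $Ζ_l(\xi)=\xi^2/2-T\ln(\zeta_l(\xi)/\zeta_l(0))$ therefore cancels identically, giving $Ζ_l(\xi)=c_l\,\xi^{l+1}+o(\xi^{l+1})$ with $c_l>0$. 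For $l>1$ this makes the critical point of $Ζ_l$ at $\xi=0$ degenerate of order at least $4$, and since $\chi=\xi/\zeta_l(0)+O(\xi^3)$ is a local diffeomorphism at $0$, the same holds in the Darboux variable $\chi$. A standard one-degree-of-freedom computation then shows that for a hamiltonian $F(\chi)+U(\sigma)$ with $F$ vanishing to order $2m\geq 4$ at its minimum and $U$ to order $2$ at its minimum, the period of a closed orbit at energy $E\downarrow 0$ scales as $E^{1/(2m)-1/2}$ and therefore diverges; in particular the period is non-constant, verifying~\ref{thm:separable-thermostat-cond3}.
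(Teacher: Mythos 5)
Your proof is correct and follows the same architecture as the paper: condition~\ref{cond:2} of definition~\ref{def:separable-thermostat} is exactly what lemma~\ref{lem:wk-condition-2-is-almost-always-satisfied} supplies for $T\in\tau_H$, conditions~\ref{thm:separable-thermostat-cond1}--\ref{thm:separable-thermostat-cond2} are formal, and non-isochronicity for $l>1$ rests on the cancellation of the quadratic term in $Ζ_l(\xi)=\tfrac12\xi^2-T\ln(\zeta_l(\xi)/\zeta_l(0))$, leaving a degenerate minimum of even order $l+1\ge 4$; your leading coefficient agrees with the paper's expansion of $\mean{G}_{0,T}$. You differ in the two places where the paper is terse. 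First, you verify invariance of $\D{\mu}_{\beta}$ explicitly by reducing it to $\zeta_l'(\xi)=\beta\bigl(\xi\,\zeta_l(\xi)-\xi^l\bigr)$, which is indeed the identity that the Maclaurin construction of $\zeta_l$ is engineered to satisfy; the paper simply declares these conditions clear. Second, for condition~\ref{thm:separable-thermostat-cond3} the paper computes the hessian~\eqref{eq:wk-g0t-hessian}, notes it is degenerate for $l>1$, and asserts that $\mean{R}_0$ is therefore not conjugate to its linearization and hence not isochronous; you instead run the homogeneous scaling argument and obtain the explicit divergence $P(E)\sim E^{1/(l+1)-1/2}\to\infty$ as $E\downarrow 0$, which is a more self-contained justification of the same step. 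One small gap relative to the paper: your scaling lemma assumes $U_T$ vanishes to exactly second order at $\sigma=0$, i.e.\ $\tilde{Κ}'(I_0)\ne 0$, whereas the paper also treats the case $\tilde{Κ}'(I_0)=0$, where $U_T$ is more degenerate (and, since only smoothness is assumed, possibly flat). The conclusion still holds — the one-sided bound $P(E)=A'(E)\gtrsim E^{1/(l+1)-1}\,\bigl|\set{U_T<E}\bigr|\gtrsim E^{1/(l+1)-1/2}$ uses only $U_T(\sigma)\le C\sigma^2$, valid for any $C^2$ potential with a critical point at $0$ — but you should state this case rather than exclude it by hypothesis.
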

\begin{proof}
  Lemma~\ref{lem:wk-condition-2-is-almost-always-satisfied} proved
  that condition~\ref{cond:2} of the definition of a separable
  thermostat is satisfied when $T ∈ τ_H$. It is clear that the other
  conditions of definition~\ref{def:separable-thermostat} hold on the
  same interval from that lemma. 

  Let now $T ∈ τ_H$ and let $H_0$ (resp. $I_0$) be the energy
  (resp. action) at the thermostatic equilibrium on the
  above-mentioned
  interval. Condition~\ref{thm:separable-thermostat-cond1} holds by
  inspection of~\eqref{eq:wat-kob-separable}, and
  condition~\ref{thm:separable-thermostat-cond2} holds in view of
  lemma~\ref{lem:wk-condition-2-is-almost-always-satisfied}. It
  remains to verify the non-constancy of the period of $\mean{R}_0$.

  The proof of
  lemma~\ref{lem:wk-condition-2-is-almost-always-satisfied} shows that
  $\tilde{Κ}-T$ is negative (resp. positive) on an interval to the
  left (resp. right) of $I_0$. This implies that near $I_0$ the graph
  of the ``potential'' $U_T(I)$ is $\cup$-shaped modulo possible bad
  behaviour near in a neighbourhood of $I_0$ itself. This implies that
  $\mean{G}_{0,T}$ is proper in some neighbourhood of $(I_0,0)$.

  On the other hand, the hessian of $\mean{G}_{0,T}$
  \begin{equation}
    \label{eq:wk-g0t-hessian}
    \hessian \mean{G}_{0,T} = ζ_l(ξ)^{-1} \, ξ^{l-1} (l - β ξ^{l+1}) \, \left( \D ξ \right) ² + T \, \tilde{Κ}^{-2} \, \tilde{Κ}' \, \left( \D H \right) ².
  \end{equation}
  If $\tilde{Κ}' ≠ 0$ at $I_0$, then the hessian is positive definite
  at the thermostatic equilibrium when $l=1$; otherwise it is
  degenerate. The lowest order term in the coefficient on $(\D ξ)^2$
  is of degree $l-1$ in $ξ$ since $ζ_l(0)=(2T)^n \, n!$. This implies
  that for $l>1$,
  $\mean{G}_{0,T} = ((2T)^n \, n!)^{-1}\, (l+1)^{-1}\, ξ^{l+1} + ½
  T^{-1}\,\tilde{Κ}'(H_0)\, (H-H_0)^2 + O(ξ^{l+3},(H-H_0)^3)$. Since
  $l$ is odd, $\mean{G}_{0,T}$ is proper in a neighbourhood of its
  critical point. If $l>1$, then the degeneracy of the hessian of
  $\mean{G}_{0,T}$ at the critical point implies that $\mean{R}_0$ is
  not conjugate to its linearization at the critical point, and
  therefore its period is not constant.

  If $\tilde{Κ}' = 0$ at $I_0$, the hessian is degenerate and the
  period of $\mean{R}_0$ cannot be constant in a neighbourhood of
  $(I_0,0)$ for similar reasons.
  
  This proves the second part of the lemma.
\end{proof}

One is now in a position to prove the analogue to
Theorems~\ref{thm:nh-kam-nec} \& \ref{thm:logistic-nec-kam}.

\begin{theorem}
  \label{thm:wk-nec-kam}
  Let $H : \cotangent Σ → \R$ be a well-behaved, real-analytic
  hamiltonian. Then, for all $T ∈ τ_H$, the period of $\bar{R}_0$ is
  not constant in a neighbourhood of the critical point $(I_0,0)$.
\end{theorem}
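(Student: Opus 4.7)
The plan is to imitate the proof of Theorem~\ref{thm:nh-kam-nec}, separating cases on $l$. When $l>1$, the hessian computation in the proof of Lemma~\ref{lem:wk-kam-nec} already shows that $\bar{R}_0$ is not conjugate to its linearisation at $(I_0,0)$, so its period cannot be constant there; the same observation handles the sub-case $l=1$, $\tilde{Κ}'(I_0)=0$. The remaining, principal case is $l=1$ with $\tilde{Κ}'(I_0)\neq 0$; here $Ζ_1(ξ)=½ ξ²$, so $\bar{G}_{0,T}=½ ξ²+U_T(σ)$ is mechanical in the Darboux coordinates of~\eqref{eq:wk-darboux-coordinates}, with potential $U_T(σ)=H(I(σ))-T\ln F_k(I(σ))$.

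Assume for contradiction the period is constant near $(I_0,0)$. Real-analyticity of $H$, hence of $F_k$ and $\tilde{Κ}_k$, forces global constancy, so $U_T$ is a real-analytic isochronous potential on its maximal interval, with unique critical point at $σ=0$ and strictly monotone on each side. A direct computation yields
\begin{equation*}
  U_σ = F_k(I)\bigl(1 - T/\tilde{Κ}_k(I)\bigr),
\end{equation*}
the WK analogue of~\eqref{eq:u_s-u_ss}.

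I would next establish the WK analogue of Claim~\ref{cl:1-no-crit-pts}: $H$ has no critical values in $[H_0,∞)$. Let $c_0$ be the interior of the edge of $Γ_H$ containing $γ_0=H^{-1}(H_0)$, and suppose $v=\sup c_0$ is a saddle vertex. In case~\itref{it:cl:wk-mech-implies-k-surjective-3} of Claim~\ref{cl:wk-mech-implies-k-surjective}, $\tilde{Κ}_k→0$ while $F_k$ stays bounded away from $0$, forcing $U_σ→-∞$ and contradicting monotonicity. In case~\itref{it:cl:wk-mech-implies-k-surjective-2}, both $F_k$ and $\tilde{Κ}_k$ remain bounded above and below; since $\D σ/\D H = 1/F_k$ is bounded on a bounded $H$-interval, $σ$ has a finite limit $σ_v$, so $U$ is bounded on $[0,σ_v)$ and the topological argument used in Claim~\ref{cl:1-consequences} supplies a spurious second critical value of $\bar{G}_{0,T}$ at $\sup U$, contradicting isochronicity.

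The critical dichotomy at $\min c_0$ then mirrors the Nos{é}-Hoover argument. Case~(1) (saddle vertex at $\min c_0$) yields $U$ bounded by the finite-$σ$ reasoning above, producing an extra critical value of $\bar{G}_{0,T}$. Case~(2) (local minimum vertex) uses $F_k(I) = b\, I^{(k+1)/2} + O(I^{(k+2)/2})$ near a non-degenerate minimum of $H$ (Claim~\ref{cl:wk-mech-implies-k-surjective}~\itref{it:cl:wk-mech-implies-k-surjective-1}), so $-T\ln F_k$ retains a logarithmic divergence at $I=0$, $σ→-∞$, and the Bolotin--MacKay comparison of Claim~\ref{cl:2-consequences} still yields $\liminf U_σ = -∞$; since we have just ruled out saddle critical values of $H$ on $c_0$, the degeneracy $\liminf H_I=0$ must occur at the minimum itself, contradicting the Morse hypothesis of well-behavedness. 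The main obstacle is the type~\itref{it:cl:wk-mech-implies-k-surjective-2} saddle step, where neither $F_k$ nor $\tilde{Κ}_k$ vanishes or blows up: one cannot contradict monotonicity of $U$ by a direct derivative argument and must instead trade on the finiteness of the $σ$-range to conjure a second critical value of $\bar{G}_{0,T}$.
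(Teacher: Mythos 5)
Your proposal is correct and follows essentially the same route as the paper's proof: reduce via Lemma~\ref{lem:wk-kam-nec} to the case $l=1$, pass to the Darboux coordinates \eqref{eq:wk-darboux-coordinates} where $U_{σ} = F_k(1-T/\tilde{Κ}_k) = F_H(\tilde{Κ}-T)$, rule out saddle vertices on the edge through $γ_0$ by the boundedness of $U_T = H - T\ln F_k$ there (the paper treats cases (2) and (3) of Claim~\ref{cl:wk-mech-implies-k-surjective} uniformly by this boundedness, where you split off case (3) with a $U_{σ}\to-∞$ argument), and force degeneracy of the minimum via the Bolotin--MacKay comparison exactly as in Claim~\ref{cl:2-consequences}. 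The only cosmetic difference is that the paper's initial reduction is to $l=1$, $k\ge 3$ rather than to $l=1$ with $\tilde{Κ}'(I_0)\ne 0$.
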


\begin{proof}
  All cases except $l=1$ and $k ≥ 3$ follow from
  Lemma~\ref{lem:wk-kam-nec} or Theorem~\ref{thm:nh-kam-nec}, so
  assume that $l=1$ and $k ≥ 3$.

  As in the proof of Theorem~\ref{thm:nh-kam-nec}, let $γ_0 ∈ Γ_H$ be
  the connected component of the level set of $H$ where the
  thermostatic equilibrium $(I_0,0)$ is attained and let $c_0 ⊂ Γ_H$
  be the edge containing $γ_0$.

  If $c_0$ has a saddle vertex, then either
  case~\ref{it:cl:wk-mech-implies-k-surjective-2}
  or~\ref{it:cl:wk-mech-implies-k-surjective-3} of
  claim~\ref{cl:wk-mech-implies-k-surjective} applies. In both cases,
  $H$ and $F$ are bounded in a $c_0$-neighbourhood of the vertex and
  hence the potential $U_T=H-T \ln F$~\eqref{eq:wk-g0} is similarly
  bounded. This contradicts isochronicity. Thus, if $\bar{R}_0$ is
  isochronous, then $c_0$ has no saddle vertex. Hence
  claim~\ref{cl:1-no-crit-pts} holds.

  To prove claim~\ref{cl:2-consequences} holds here, one computes, in the Darboux
  coordinates $(σ,ξ)$~\eqref{eq:wk-darboux-coordinates}, that
  \begin{equation}
    \label{eq:wk-u_s}
    U_{σ} = I_H \, f \, \left( \tilde{Κ} - T \right) = F_H \, \left( \tilde{Κ} - T \right).
  \end{equation}
  Assume that the local minimum vertex $\min c_0$ is
  non-degenerate. Then $I_H → ω^{-2} > 0$ as $γ ↘ \min c_0$. On the
  other hand, both $f$ and $\tilde{Κ}$ approach $0$ as $γ ↘ \min
  c_0$. This implies that $U_{σ} ↗ 0$ and so $U_{σ}$ is bounded on
  $(\min c_0, γ_0] ⊂ c_0$. By the same argument as in the second
  paragraph following claim~\ref{cl:2-consequences}, one obtains a
  contradiction with isochronicity, thereby proving that claim
  here. Therefore, the local minimum vertex cannot be non-degenerate
  if the vector field $\bar{R}_0$ is isochronous. This completes the
  proof.
\end{proof}

\begin{figure}[h]
  \caption{$\tilde{Κ}_k$, rescaled by $\frac{k+1}{2}$, for the planar pendulum $H = ½ p ² - \cos q$ with $k=1,3,5,7$. The inset (upper left) highlights the behaviour near the critical energy level $H=1$.}
  \label{fig:wk-k}
  \includegraphics[width=10cm,keepaspectratio]{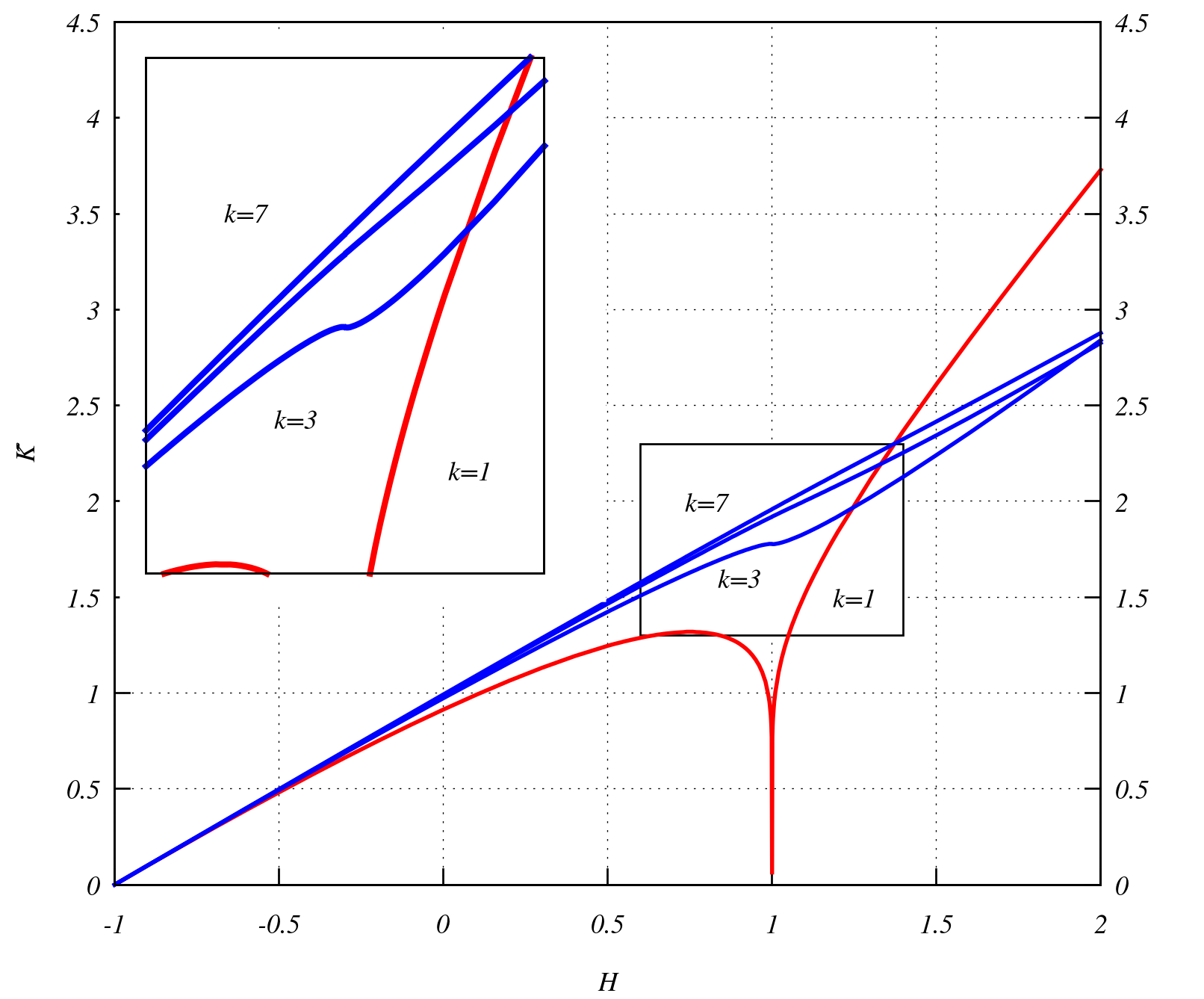} 
\end{figure}

\begin{figure}[h]
  \caption{$\ln(f_k)$ vs. $\ln(H+1)$ for the planar pendulum $H = ½ p ² - \cos q$ with $k=3,5,7,9$. The inset (upper left) highlights the behaviour near crossings. The approximate linearity to the left of $H=0$ is striking.}
  \label{fig:wk-lnf-lnh}
  \includegraphics[width=10cm,keepaspectratio]{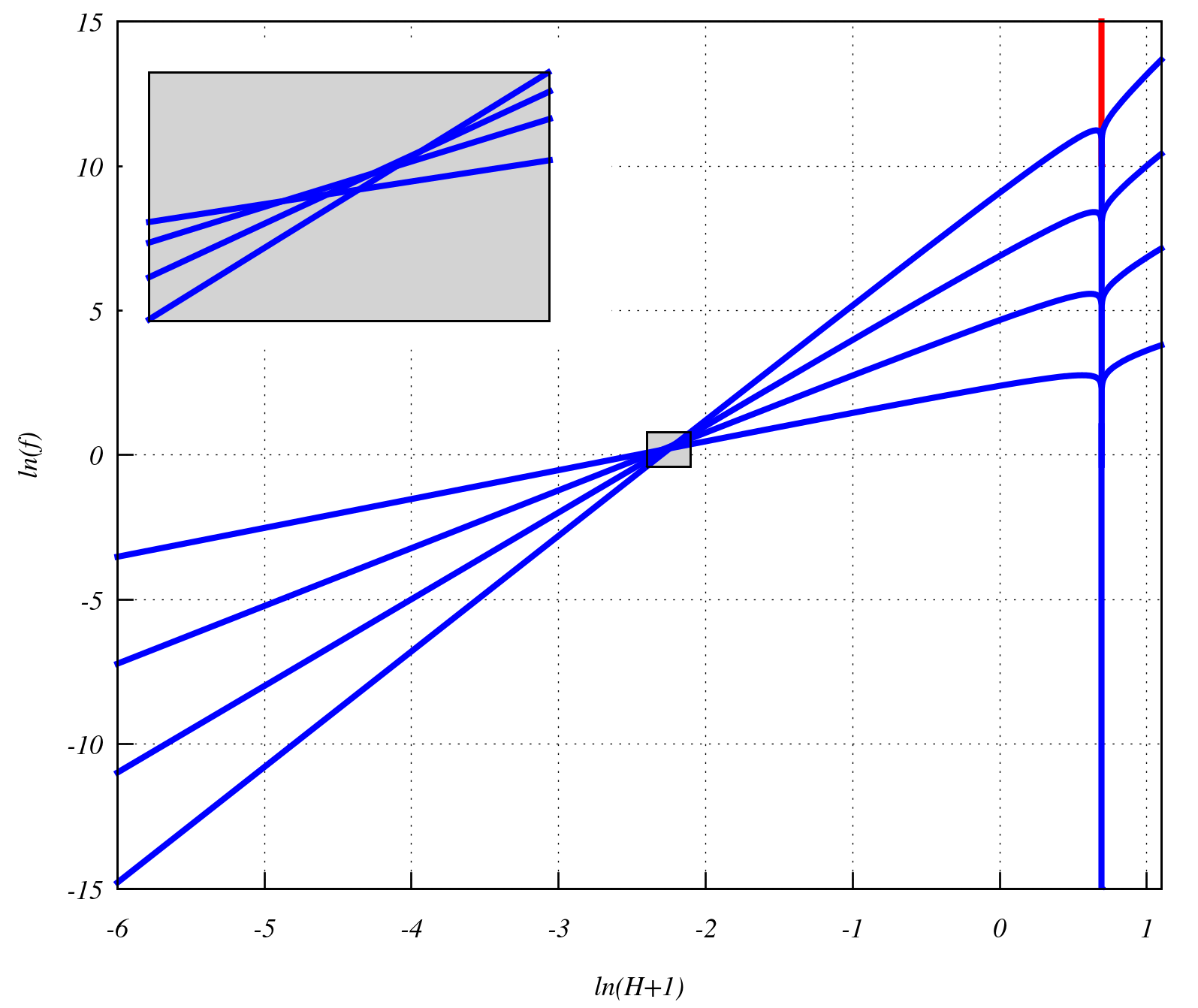} 
\end{figure}

\subsection{Hoover \& Sprott and Hoover, Sprott \& Hoover}
\label{sec:hs-hsh}

In \cite{10.1016/j.cnsns.2015.08.02}, Hoover, Sprott and Hoover obtain
numerical results that indicate for some parameter values there are
large sets with positive Lyapunov exponents for the thermostat with
%% There are typos in the thermostat on p.7 of HSH arxiv:1504-07654.
%% here is the correct version
%% Their A is missing β=1/T

\begin{align}
  \label{eq:hsh-sh-separable}
  G_{β} &= H + ¼ ξ ⁴, &&& A_0 &= q, & A_1 &= -ξ ³,\\
  \notag
  B_0 &= p ³, & B_1 &= -μ ξ ³, & C_1 &= 1, & C_0 &= \left[ q H_q - T \right] + μ p ² \left[ p  H_p - 3T \right].
\end{align}
For comparison
with~\cite[eq. {[}HS{]},p. 237]{10.1016/j.cnsns.2015.08.02}, their
$ξ⁴$ term is not multiplied by $β$ and their $T$ (resp. $β$, $α/T$) is
$T$ (resp. $ε$, $ε μ$) here. It is trivial to see that conditions
\ref{cond:1}, \ref{cond:3} and \ref{cond:4} of
definition~\ref{def:separable-thermostat} are satisfied. To prove that
condition \ref{cond:2} holds, note that on averaging $C_0$, one
obtains
\begin{equation}
  \label{eq:hsh-ave}
  \mean{C}_0 = \left[ Κ(I) - T \right] + μ f(I) \left[ \tilde{Κ}(I) - T \right],
\end{equation}
where $Κ$ is the averaged temperature from \eqref{eq:nh-mean-ke},
$\tilde{Κ}=\tilde{Κ}_3$ is the weighted average temperature from
\eqref{eq:wk-c0} with $k=3$ and $f=f_3$ is defined likewise.

The hamiltonian $\mean{G}_{0,β}$ of the averaged vector field
$\mean{R}_0$ and the latter's hamiltonian $\mean{G}_{0,T}$ with
respect to the symplectic form $-Tω_{β}/\mean{G}_{0,β}$ from
Lemma~\ref{lem:ave-transformation} are computed to be
\begin{align}
  \label{eq:hsh-g0b}
  \mean{G}_{0,β} &= (β Z(β) H_I(I))^{-1}\, \left( Κ(I) + μ f(I) \, \tilde{Κ} \right) \exp(-β G_{β}(I,ξ)) \\
  \label{eq:hsh-r0}
  \mean{R}_0 &= \frac{1}{H_I} × \left( -ξ³ \, (H_I)^{-1} \, \left[ Κ(I)+μ f(I) \, \tilde{Κ}(I) \right]\, ∂_I + \mean{C}_0 \, ∂_{ξ} \right) \\
  \label{eq:hsh-g0}
  \mean{G}_{0,T} &= ξ⁴/4 + \underbrace{H(I) - T \ln Q_{μ}(I)}_{U_T},
\end{align}
where
$\ln Q_{μ} = \int^H_{H_0} \D{H} \, \left[ \frac{1+μ f}{Κ + μ f
    \tilde{Κ}} \right]$. One checks that for $μ = 0$ \eqref{eq:hsh-g0}
reproduces the hamiltonian in \eqref{eq:nh-g0}, while for $μ = ∞$, the
potential coincides with that in~\eqref{eq:wk-g0}.

\begin{lemma}
  \label{lem:hsh-kam-nec}
  Let $H : \cotangent Σ → \R$ be a well-behaved, smooth
  hamiltonian. Then, for all $T, μ > 0$, the thermostat
  $Τ$~\eqref{eq:hsh-sh-separable} is a separable thermostat in the
  sense of definition~\ref{def:separable-thermostat}.
\end{lemma}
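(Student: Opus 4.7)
The plan is to verify the four conditions of Definition~\ref{def:separable-thermostat} for the thermostat \eqref{eq:hsh-sh-separable}.

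Conditions~\ref{cond:3} and~\ref{cond:4} are immediate from the formulas: $A_1=-\xi^3$ and $B_1=-\mu\xi^3$ are odd in $\xi$, and $C_1\equiv 1$ is positive. For condition~\ref{cond:1}, I will invoke two averaged identities. The first is $\mean{qH_q}=\mean{pH_p}=K(I)$ on every regular orbit, obtained by integrating $\frac{d}{dt}(pq)=pH_p-qH_q$ along $X_H$ over one period; this is strictly positive where $H_I>0$ (cf.~\eqref{eq:nh-mean-ke}). The second is $\mean{p^3H_p}=H_I(I)\,F_3(I)=f_3(I)\,\tilde{Κ}_3(I)$, strictly positive where $H_I>0$ by~\eqref{eq:wk-pkhp}. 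Both $\mean{A_0H_q}$ and $\mean{B_0H_p}$ are therefore strictly positive on every regular orbit.

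The crux of the lemma is condition~\ref{cond:2}: showing that $\mean{C}_0=(K-T)+\mu f(\tilde{Κ}-T)$ from~\eqref{eq:hsh-ave} changes sign. I plan to combine a Gibbs-Boltzmann integration-by-parts identity with the behaviour of $K$, $\tilde{Κ}_3$, and $f_3$ at local minimum vertices of $Γ_H$. Integrating by parts against the Gibbs-Boltzmann density $e^{-β H}/Z(β)$ in $q$ and $p$ separately (boundary terms vanish by properness of $H$ and the existence of the moments from condition~\itref{it:h-is-well-behaved-unbounded-temp}) yields
\begin{equation*}
  \gibbsexp{β}{qH_q}=T, \qquad \gibbsexp{β}{p^3H_p}=3T\,\gibbsexp{β}{p^2},
\end{equation*}
whence $\gibbsexp{β}{\mean{C}_0}=0$ for every $β>0$. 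By case~\itref{it:cl:wk-mech-implies-k-surjective-1} of Claim~\ref{cl:wk-mech-implies-k-surjective}, at every local minimum vertex of $Γ_H$ one has $K,\tilde{Κ}_3,f_3\to 0$, so $\mean{C}_0\to -T<0$. The Gibbs mean vanishing while $\mean{C}_0<0$ on open neighbourhoods of positive Gibbs measure forces $\mean{C}_0$ to be strictly positive on some set of positive Gibbs measure.

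The main obstacle will be locating a \emph{single} edge of $Γ_H$ on which $\mean{C}_0$ actually changes sign, as opposed to sign changes occurring only across saddle vertices---since the definition requires sign change on a connected component of $H^{-1}([c_-,c_+])$ for a regular interval, and such a component lies inside one edge. I plan to follow a path in $Γ_H$ from a local minimum vertex (limit $-T<0$) up towards the non-compact root edge, on which Lemma~\ref{lem:nh-is-a-separable-thermostat} together with its direct analogue for $\tilde{Κ}_3$ (obtained by the same Gibbs-boundedness argument applied with $k=3$) supplies points where $K$ and $\tilde{Κ}_3$ are arbitrarily large, whence $\mean{C}_0>0$. At each saddle vertex $v$ along this path, I will compare the limit of $\tilde{Κ}_3$ along the outgoing edge, namely $(\sum a_i)/(\sum b_i)$, with the per-incoming-edge limits $a_i/b_i$ from the proof of Lemma~\ref{lem:wk-mech-implies-k-surjective}, to identify an edge whose two endpoint limits of $\mean{C}_0$ have opposite signs. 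The intermediate-value theorem on that edge then produces an interior zero with sign change, and a sufficiently short interval $[c_-,c_+]$ of regular values around the corresponding level of $H$ yields the connected component required by condition~\ref{cond:2}.
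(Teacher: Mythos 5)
Your verification of conditions (3) and (4), and of condition (1) via the virial identity $\mean{q H_q}=\mean{p H_p}=Κ$, matches the paper (that identity is exactly how \eqref{eq:hsh-ave} is obtained). The gap is in your treatment of condition (2). The identity $\gibbsexp{β}{\mean{C}_0}=0$ rests on integration by parts with vanishing boundary terms, and ``well-behaved'' does not supply those: condition~\itref{it:h-is-well-behaved-unbounded-temp} guarantees only the existence of the moments and a $\limsup$ as $β ↘ 0$, not pointwise decay of $q e^{-β H}$ or $p^3 e^{-β H}$ at infinity (nor even the existence of $\gibbsexp{β}{q H_q}$). The paper is explicit that the analogous identity can fail: Remark~\ref{rem:nh-is-a-separable-thermostat} computes $\gibbsexp{β}{Κ}=T\left(1-\limsup_{H → ∞} I e^{-β H}\right)$, which equals $T$ only under the extra hypothesis $I=o(e^{β H})$. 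Since the boundary deficits all push in the same direction, what you can actually conclude is $\gibbsexp{β}{\mean{C}_0} ≤ 0$, and from that, together with $\mean{C}_0<0$ near the minima, positivity of $\mean{C}_0$ somewhere does not follow. The paper's intended argument (its proof is the one-liner ``similar to Lemma~\ref{lem:nh-is-a-separable-thermostat}'') avoids this entirely: assume $\mean{C}_0 ≤ 0$ on all of $Γ_H$, integrate against the Gibbs measure at an \emph{auxiliary} inverse temperature $β'$, and let $β' ↘ 0$; condition~\itref{it:h-is-well-behaved-unbounded-temp} with $k=3$ (or $k=1$) then contradicts the resulting bound $\gibbsexp{β'}{p H_p}+μ\,\gibbsexp{β'}{p^3 H_p} ≤ T\left(1+3 μ\, \gibbsexp{β'}{p^2}\right)$.

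Your last paragraph is also misdirected. For this thermostat $\mean{C}_0=(Κ-T)+μ f(\tilde{Κ}-T)$ tends to a strictly negative limit at \emph{every} vertex of $Γ_H$, saddles included: at a saddle $H_I → 0$, so $Κ=I H_I → 0$ and $f\tilde{Κ}=\mean{p^3 H_p}=H_I F → 0$ while $f ≥ 0$ stays bounded, giving the limit $-T(1+μ \lim f)<0$. Hence any edge containing a point where $\mean{C}_0>0$ already exhibits the required sign change, and no comparison of $\tilde{Κ}_3$-limits across saddle vertices (the mechanism that forces the admissible set $τ_H$ in the Watanabe--Kobayashi case) is needed. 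That is precisely why this lemma holds for all $T>0$ with no excluded temperatures, in contrast to Lemma~\ref{lem:wk-condition-2-is-almost-always-satisfied}.
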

The proof is similar to that of~\ref{lem:nh-is-a-separable-thermostat}.

The following is an immediate consequence of the preceding lemma and
the fact that $\mean{G}_{0,T}$ is quartic in $ξ$.

\begin{theorem}
  \label{thm:hsh-nec-kam}
  Let $H : \cotangent Σ → \R$ be a well-behaved, smooth
  hamiltonian. Then the period of $\bar{R}_0$ is not constant in a
  neighbourhood of the critical point $(I_0,0)$.
\end{theorem}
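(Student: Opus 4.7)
The plan is to exploit the explicit quartic form of $\mean{G}_{0,T}$ in $\xi$ (equation~\eqref{eq:hsh-g0}) to show that the averaged vector field cannot be isochronous at $(I_0,0)$, by the same strategy used in Lemma~\ref{lem:wk-kam-nec} for $l>1$. Lemma~\ref{lem:hsh-kam-nec} already provides that $Τ$ is separable, so Lemma~\ref{lem:ave-crit-pt} together with the sign analysis of $\mean{C}_0$ in~\eqref{eq:hsh-ave} gives a critical point $(I_0,0)$ with $U_T'(I_0)=0$. Passing to Darboux coordinates $(\sigma,\xi)$ for the rescaled symplectic form (using the appropriate analogue of~\eqref{eq:wk-darboux-coordinates}, namely $d\sigma = H_I^2 / (\Kappa + \mu f\tilde{\Kappa})\, dI$) recasts $\mean{G}_{0,T} = \xi^4/4 + U_T(\sigma)$ with critical point at $(\sigma_0,0)$.

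Next I would observe that the Hessian of $\mean{G}_{0,T}$ at $(\sigma_0,0)$ is degenerate in the $\xi$-direction, since $\partial_\xi^2(\xi^4/4)|_{\xi=0}=0$. The linearization of $\mean{R}_0$ at the critical point is therefore $\dot\sigma=0$, $\dot\xi = -U_T''(\sigma_0)(\sigma-\sigma_0)$, which is nilpotent and admits no periodic orbits whatsoever. Exactly as in Lemma~\ref{lem:wk-kam-nec}, $\mean{R}_0$ is not conjugate to its linearization at $(\sigma_0,0)$, so its period cannot be constant in a neighbourhood.

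To make the obstruction quantitative, the period of the closed orbit of $\mean{R}_0$ at energy level $g>g_0:=U_T(\sigma_0)$ is
\begin{equation*}
  T(g) = 2\int_{\sigma_-(g)}^{\sigma_+(g)} \frac{d\sigma}{(4(g-U_T(\sigma)))^{3/4}},
\end{equation*}
since $\dot\sigma = \partial_\xi \mean{G}_{0,T} = \xi^3$. Assuming $U_T''(\sigma_0)>0$, the substitution $\sigma-\sigma_0 = \sqrt{2(g-g_0)/U_T''(\sigma_0)}\,\sin\theta$ gives $T(g) \sim C\,(g-g_0)^{-1/4}$ as $g\searrow g_0$, so $T$ blows up and is certainly non-constant. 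If $U_T$ has a degenerate minimum at $\sigma_0$ the exponent changes but the scaling remains a non-trivial power of $g-g_0$, and non-constancy still holds. Either way, the conclusion is immediate once the quartic form of the kinetic term is recognised.

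The main obstacle is really just verifying that $U_T$ has a genuine local minimum (not merely a critical point) at $\sigma_0$, so that closed orbits exist in a neighbourhood on which to speak of a period. This follows from the separable thermostat hypothesis: $\mean{C}_0 = [\Kappa(I)-T] + \mu f(I)[\tilde{\Kappa}(I)-T]$ changes sign at $I_0$ from negative to positive (as in the argument giving $\Beta\Gamma=-1$ in Theorem~\ref{thm:separable-thermostat}), which via~\eqref{eq:hsh-r0} forces $U_T'$ to change sign the same way, so $U_T$ attains a local minimum at $\sigma_0$ and the region $\{\mean{G}_{0,T}\le g_0+\varepsilon\}$ is foliated by closed orbits for small $\varepsilon>0$.
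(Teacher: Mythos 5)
Your proposal is correct and takes essentially the same route as the paper, which disposes of this theorem in one line by citing Lemma~\ref{lem:hsh-kam-nec} and the fact that $\mean{G}_{0,T}$ is quartic in $ξ$, so that the Hessian is degenerate at the thermostatic equilibrium exactly as in the $l>1$ case of Lemma~\ref{lem:wk-kam-nec}. Your explicit asymptotic $T(g)\sim C\,(g-g_0)^{-1/4}$ for the period near the critical level is a welcome quantitative sharpening of the paper's ``not conjugate to its linearization'' argument, but it is a fleshing-out of the same idea rather than a different proof.
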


\bibliographystyle{abbrv}
\bibliography{nhwc-references}
\end{document}